\pgfplotsset{tick label style = {font = \tiny}}
\def\@seccntformat#1{%
  \protect\textup{\protect\@secnumfont
    \ifnum\pdfstrcmp{subsection}{#1}=0 \bfseries\fi
    \csname the#1\endcsname
    \protect\@secnumpunct
  }%
}
\newtheorem{theorem}{Theorem}[section]
\newtheorem{proposition}[theorem]{Proposition}
\newtheorem{lemma}[theorem]{Lemma}
\newtheorem{algorithm}[theorem]{Algorithm}
\newtheorem{definition}[theorem]{Definition}
\newtheorem{remark}[theorem]{Remark}
\renewcommand{\aa}{\boldsymbol{a}}
\newcommand\ff{\boldsymbol{f}}
\newcommand\hh{\boldsymbol{h}}
\newcommand\mm{\boldsymbol{m}}
\newcommand\nn{\boldsymbol{n}}
\newcommand\uu{\boldsymbol{u}}
\newcommand\vv{\boldsymbol{v}}
\newcommand\xx{\boldsymbol{x}}
\newcommand\zz{\boldsymbol{z}_h}
\newcommand\CC{\boldsymbol{C}}
\newcommand\HH{\boldsymbol{H}}
\newcommand\LL{\boldsymbol{L}}
\newcommand\WW{\boldsymbol{W}}
\newcommand\M{\mathcal{M}}
\newcommand\E{\mathcal{E}}
\newcommand\K{\mathcal{K}}
\newcommand\N{\mathcal{N}}
\newcommand\T{\mathcal{T}}
\newcommand\Kh{\boldsymbol{\K}_h}
\newcommand\Mh{\boldsymbol{\M}_h}
\newcommand\Nh{\N_h}
\newcommand\Th{\T_h}
\newcommand\pphi{\boldsymbol{\phi}}
\newcommand\vvphi{\boldsymbol{\vphi}}
\newcommand\ppsi{\boldsymbol{\psi}}
\newcommand\eps{\varepsilon}
\newcommand\vphi{\varphi}
\newcommand\interp{\mathcal{I}_h}
\newcommand\Interp{\boldsymbol{\mathcal{I}}_h}
\newcommand\mmh{\mm_h}
\newcommand\mmhk{\mm_{hk}}
\newcommand\mmhkt{\de_t\mm_{hk}}
\newcommand\pphih{\pphi_h}
\newcommand\ppsih{\ppsi_h}
\newcommand\vvh{\vv_h}
\newcommand\vvhk{\vv_{hk}}
\newcommand\0{\boldsymbol{0}}
\newcommand\sphere{\mathbb{S}^2}
\newcommand\curl{\nabla\times}
\renewcommand\div{\nabla\cdot}
\newcommand\grad{\nabla}
\newcommand\Grad{\boldsymbol{\nabla}}
\newcommand\lapl{\Delta}
\newcommand\Lapl{\boldsymbol{\Delta}}
\newcommand\real{\mathbb{R}}
\renewcommand{\vec}[1]{\mathbf{#1}}
\newcommand{\abs}[1]{\left\lvert #1 \right\rvert}
\newcommand{\edual}[2]{\langle\hspace*{-1mm}\langle #1,#2 \rangle\hspace*{-1mm}\rangle}
\newcommand{\inner}[3][]{\langle #2,#3 \rangle_{#1}}
\newcommand{\norm}[2][]{\left\lVert #2 \right\rVert_{#1}}
\DeclareMathOperator{\diam}{diam}
\DeclareMathOperator{\spann}{span}
\DeclareMathOperator{\trace}{tr}
\newcommand\ddt{\frac{\mathrm{d}}{\mathrm{d}t}}
\newcommand\de{\partial}
\newcommand\dt{\mathrm{d}t}
\newcommand\dx{\mathrm{d}\xx}
\newcommand\mmt{\de_t \mm}
\newcommand\Heff{\HH_{\mathrm{eff}}}
\newcommand\Hext{\HH_{\mathrm{ext}}}
\newcommand\Hstray{\HH_{\mathrm{s}}}
\newcommand\ldm{\ell_{\mathrm{dm}}}
\newcommand\lex{\ell_{\mathrm{ex}}}
\newcommand\Ms{M_{\mathrm{s}}} 
\newcommand\heff{\hh_{\mathrm{eff}}}
\newcommand\Cinv{C_{\mathrm{inv}}}
\newcommand\Cnorm{C_{\mathrm{norm}}}
\newcommand\Cgeo{C_{\mathrm{geo}}}
\newcommand{\Hcurl}[1]{\HH(\mathrm{curl},#1)}
\newcommand{\bigO}[1]{\mathcal{O}(#1)}
\newcommand\weakto{\rightharpoonup}
\newcommand\weakstarto{\stackrel{*}{\rightharpoonup}}
\begin{document}
\title{Convergent tangent plane integrators for the simulation of chiral magnetic skyrmion dynamics}
\author{Gino~Hrkac}
\author{Carl-Martin~Pfeiler}
\author{Dirk~Praetorius}
\author{Michele~Ruggeri}
\author{Antonio~Segatti}
\author{Bernhard~Stiftner}
\address[Gino Hrkac]{College of Engineering, Mathematics and Physical Sciences, University of Exeter, North Park Road, EX4 4QF, Exeter, UK}
\email{G.Hrkac@exeter.ac.uk}
\address[Carl-Martin Pfeiler, Dirk Praetorius, Bernhard Stiftner]{Institute for Analysis and Scientific Computing, TU Wien, Wiedner Hauptstra{\ss}e 8--10, 1040, Vienna, Austria}
\email{carl-martin.pfeiler@asc.tuwien.ac.at}
\email{dirk.praetorius@asc.tuwien.ac.at}
\email{bernhard.stiftner@asc.tuwien.ac.at}
\address[Michele Ruggeri]{Faculty of Mathematics, University of Vienna, Oskar-Morgenstern-Platz 1, 1090 Vienna, Austria}
\email[Corresponding author]{michele.ruggeri@univie.ac.at}
\address[Antonio Segatti]{Dipartimento di Matematica ``F. Casorati'', Universit\`a di Pavia, Via Ferrata 5, 27100 Pavia, Italy}
\email{antonio.segatti@unipv.it}
\date{\today}
\thanks{\emph{Acknowledgements.}
This research has been supported by the Vienna Science and Technology Fund (WWTF) through the project \emph{Thermally controlled magnetization dynamics} (grant MA14-44), by the Austrian Science Fund (FWF) through the doctoral school \emph{Dissipation and dispersion in nonlinear PDEs} (grant W1245) and the special research program \emph{Taming complexity in partial differential systems} (grant SFB F65), and by the Engineering and Physical Sciences Research Council (EPSRC) through the projects \emph{Picosecond dynamics of magnetic exchange springs} (grant EP/P02047X/1) and 
\emph{Coherent spin waves for emerging nanoscale magnonic logic architectures} (grant EP/L019876/1).
The authors also thank S.\ Komineas (University of Crete, Heraklion, Greece) for an informal and stimulating discussion on the topic of this work.}
\keywords{Dzyaloshinskii--Moriya interaction, Finite element method, Landau--Lifshitz--Gilbert equation, Magnetic skyrmions, Micromagnetics}
\subjclass[2010]{35K55, 65M12, 65M60, 65Z05}
\begin{abstract}
We consider the numerical approximation of the Landau--Lifshitz--Gilbert equation, which describes the dynamics of the magnetization in ferromagnetic materials.
In addition to the classical micromagnetic contributions, the energy comprises the Dzyaloshinskii--Moriya interaction, which is the most important ingredient for the enucleation and the stabilization of chiral magnetic skyrmions.
We propose and analyze three tangent plane integrators, for which we prove (unconditional) convergence of the finite element solutions towards a weak solution of the problem.
The analysis is constructive and also establishes existence of weak solutions.
Numerical experiments demonstrate the applicability of the methods for the simulation of practically relevant problem sizes.
\end{abstract}
\maketitle
\section{Introduction}
\subsection{State of the art}
Magnetic skyrmions are topologically protected vortex-like magnetization configurations~\cite{nt2013,fbtck2016,wiesendanger2016}, which have been theoretically predicted~\cite{by1989,bh1994,br2001,rbf2006} and experimentally observed~\cite{mbjprngb2009,rhmbwbkw2013} in several magnetic systems.
The most important ingredient for the enucleation and the stabilization of magnetic skyrmions is the so-called Dzyaloshinskii--Moriya interaction (DMI); see~\cite{dzyaloshinskii1958,moriya1960}.
It is a short-range effect, sometimes also referred to as antisymmetric exchange, which exerts a torque on the magnetization inducing neighboring spins to be perpendicular to each other.
It is thus in direct competition with the classical Heisenberg exchange interaction, which conversely favors uniform configurations.
The DMI is modeled by an energy contribution, which is linear in the first spatial derivatives of the magnetization and is added to the micromagnetic energy for chiral ferromagnets.
Magnetic skyrmions are currently subject of intense scientific research, which includes theoretical, computational, and experimental studies; see, e.g., \cite{hbmbkwbb2011,scrtf2013,kp2015a,hkkykclksky2016,babcwcvhcsmf2017}.
As for the mathematical literature, the existence of isolated skyrmions emerging as energy minimizers of two-dimensional micromagnetic models and their dynamic stability have been investigated in~\cite{melcher2014,dm2017}, whereas chiral domain walls in ultrathin ferromagnetic films have been studied in~\cite{ms2017}.
The growing interest in skyrmions in the magnetic storage and magnetic logic community is connected with their potential as possible candidate to store the bits of future devices, with the information being encoded as presence/absence of a skyrmion; see, e.g., \cite{fcs2013,tmztcf2014} for the proposal of skyrmion racetrack memories, which are believed to overcome the original domain-wall-based device of~\cite{pht2008} and pave new ways in magnetic data logic~\cite{hkbb2015}.
\par
A well-accepted model for the magnetization dynamics is the Landau--Lifshitz--Gilbert equation (LLG)~\cite{ll1935,gilbert1955}.
The numerical approximation of LLG poses several challenges:
nonlinearities, a nonconvex pointwise constraint, an intrinsic energy law, which resembles the one of a gradient flow and combines conservative and dissipative effects, and the possible coupling with other partial differential equations (PDEs), e.g., the Maxwell equations.
\par
The numerical integration of LLG has been the subject of several mathematical studies; see, e.g., \cite{prohl2001,kp2006,garciacervera2007}.
A well-established approach is represented by the integrators usually referred to as tangent plane schemes.
These methods are based on equivalent reformulations of the equation in the tangent space.

The integrator proposed in~\cite{alouges2008a}, which considers the case in which the energy only comprises the exchange contribution, requires only the solution of one linear system per time-step, is formally of first order in time, and is unconditionally convergent towards a weak solution of the problem, i.e., the numerical analysis of the scheme does not require to impose any restrictive CFL-type coupling condition on the time-step size and the spatial mesh size.
The pointwise constraint is enforced by applying the nodal projection to the computed solution at each time-step.
The scheme generalizes the explicit scheme proposed in~\cite{aj2006} and analyzed in~\cite{bkp2008}.
Implicit-explicit approaches of the algorithm of~\cite{alouges2008a} for the full effective field were independently introduced and analyzed in~\cite{akt2012,bffgpprs2014}.
Extensions of the scheme for the discretization of the coupling of LLG with other PDEs were studied in~\cite{lt2013,bppr2013,lppt2015,bpp2015}.
Inspired by~\cite{bartels2016}, the projection-free version of the algorithm of~\cite{alouges2008a}, which avoids the use of the nodal projection, was introduced, analyzed, and applied to the decoupled integration of the coupling of LLG with a spin diffusion equation for the spin accumulation in~\cite{ahpprs2014}.
The violation of the constraint at the nodes of the mesh occurring in this case is uniformly controlled by the time-step size.
The projection-free tangent plane scheme of~\cite{ahpprs2014} was combined with a FEM-BEM coupling method for the discretization of the coupling of LLG with the magnetoquasistatic Maxwell equations in full space in~\cite{ft2017}.
There, assuming the existence of a unique sufficiently smooth solution, the authors proved optimal first-order convergence rates of the method.
A tangent plane scheme characterized by an enhanced convergence order in time was proposed in~\cite{akst2014}.
The method is unconditionally convergent and formally of (almost) second order in time.
A more efficient implicit-explicit version of this method has been proposed in~\cite{dpprs2017}.
Adapting ideas from~\cite{bp2006,alouges2008a}, the recent work~\cite{kw2018} proposes a similar predictor-corrector scheme based on a linear mass-lumped variational formulation of LLG.
\subsection{Contributions and general outline of the present work}
In this work, as a novel contribution, we introduce and analyze three tangent plane schemes for LLG in the presence of DMI.
The integrators extend to this case the first-order scheme of~\cite{alouges2008a} (Algorithm~\ref{alg:tps1}), its projection-free variant from~\cite{ahpprs2014} (Algorithm~\ref{alg:pftps1}), and the (almost) second-order scheme of~\cite{akst2014} (Algorithm~\ref{alg:tps2}).
For any algorithm, we prove that the sequence of finite element solutions, upon extraction of a subsequence, converges towards a weak solution of the problem.
For the projection-free algorithm, we prove that the convergence is even unconditional, while the stability analysis requires a mild CFL-type condition on the discretization parameters and a geometric restriction on the underlying mesh for the other two approaches.
The present extension of the LLG analysis is not straightforward, since the DMI term involves magnetization derivatives, is neither self-adjoint nor positive definite, and requires to impose different boundary conditions on LLG, which entail a careful treatment.
A by-product of our constructive analysis is the proof of existence of weak solutions, which to our knowledge was missing in the literature.
Finally, numerical experiments show that our approach can be used to study enucleation processes, stability, and dynamics of magnetic skyrmions.
\par
The remainder of the work is organized as follows:
For the convenience of the reader, we conclude this section by collecting the notation used throughout the paper.
In Section~\ref{sec:modeling}, we propose an organic presentation of the physical background and the mathematical framework of the problem under consideration.
In Section~\ref{sec:algorithms}, we derive three tangent plane schemes and state the convergence result (Theorem~\ref{thm:main}).
Section~\ref{sec:numerics} is devoted to numerical experiments.
Finally, in Section~\ref{sec:convergence}, we present the convergence analysis of the algorithms and, in particular, we establish the proof of Theorem~\ref{thm:main}.
\subsection{Notation}
We use the standard notation for Lebesgue, Sobolev, and Bochner spaces and norms; see, e.g., \cite[Chapter~5]{evans2010} or \cite[Chapter~2]{bbf2013}.
In the case of (spaces of) vector-valued or matrix-valued functions, we use bold letters, e.g., for any domain $U$, we denote both $L^2(U;\real^3)$ and $L^2(U;\real^{3 \times 3})$ by $\LL^2(U)$.
For the differential operators, we use the following notation:
For a scalar function $f$, we denote by $\grad f$ the gradient and by $\lapl f$ the Laplace operator.
For a vector-valued function $\ff$, we denote by $\div\ff$ the divergence, by $\curl\ff$ the curl, by $\Grad\ff$ the Jacobian, and by $\Lapl\ff$ the vector-valued Laplace operator.
Given another vector-valued function $\hh$, we also define $(\ff\cdot\grad)\hh$ by $[(\ff\cdot\grad)\hh]_i = \ff\cdot\grad h_i$ for all $1 \leq i \leq 3$.
We denote the unit sphere by $\sphere = \{ \vec{x} \in \real^3 : \abs{\vec{x}} = 1 \}$ and by $\{\vec{e}_i\}_{1 \leq i \leq 3} \in \real^3$ the standard basis of $\real^3$, i.e., $(\vec{e}_i)_j = \delta_{ij}$ for all $1 \leq i,j \leq 3$.
Given a vector $\vec{b} \in \real^3$ and a matrix $\vec{A} \in \real^{3 \times 3}$ (with columns $\vec{a}_i \in \real^3$ for all $1 \leq i \leq 3$), we denote by $\vec{A}\times\vec{b} \in \real^{3 \times 3}$ the matrix whose columns are $\vec{a}_i \times \vec{b}$ for all $1 \leq i \leq 3$.
By $C>0$ we always denote a generic constant, which is independent of the discretization parameters, but not necessarily the same at each occurrence.
We also use the notation $\lesssim$ to denote \emph{smaller than or equal to up to a multiplicative constant}, i.e., we write $A \lesssim B$ if there exists a constant $C>0$, which is clear from the context and always independent of the discretization parameters, such that $A \leq C B$.
\section{Mathematical model} \label{sec:modeling}
\subsection{Physical background}
Let $\Omega\subset\real^3$ be a bounded domain with boundary $\Gamma := \partial\Omega$.
The dynamics of the normalized magnetization $\mm = (m_1,m_2,m_3) \in \sphere$ is governed by LLG, which in the so-called Gilbert form reads 
\begin{equation} \label{eq:llg:physical}
\mmt
= -\gamma_0 \, \mm \times \Heff(\mm)
+ \alpha \, \mm \times \mmt;
\end{equation}
see~\cite{ll1935,gilbert1955}.
Here, $\gamma_0 \approx$ \SI{2.21e5}{\meter\per\ampere\per\second} is the rescaled gyromagnetic ratio of the electron, $0<\alpha\leq 1$ is the dimensionless Gilbert damping parameter, and $\Heff$ is the energy-based effective field (in \si{\ampere\per\meter}), i.e., it holds that
\begin{equation} \label{eq:functionalDerivative}
\mu_0 \Ms \, \Heff(\mm) = - \frac{\delta\E(\mm)}{\delta \mm},
\end{equation}
where $\E(\cdot)$ is the total energy, $\mu_0 =$ \SI{4 \pi e-7}{\newton\per\square\ampere} is the vacuum permeability, and $\Ms>0$ is the saturation magnetization (in \si{\ampere\per\meter}).
In micromagnetics, the total energy is usually the sum of the following standard terms:
\begin{itemize}
\item the Heisenberg exchange contribution
\begin{equation*}
\E_{\mathrm{ex}} (\mm) = A \int_{\Omega}\abs{\Grad\mm}^2 \dx,
\end{equation*}
where $A>0$ denotes the exchange stiffness constant (in~\si{\joule\per\meter});
\item the magnetocrystalline anisotropy contribution, which for the uniaxial case reads
\begin{equation*}
\E_{\mathrm{ani}}(\mm) = K \int_{\Omega}\left[1-(\aa\cdot\mm)^2\right] \dx,
\end{equation*}
where $K>0$ denotes the anisotropy constant (in~\si{\joule\per\meter\cubed}) and $\aa\in\sphere$ is the easy axis,
\item the Zeeman contribution
\begin{equation*}
\E_{\mathrm{ext}}(\mm) = - \mu_0 \Ms \int_{\Omega}\Hext\cdot\mm \, \dx,
\end{equation*}
where $\Hext$ denotes an applied external field (in~\si{\ampere\per\meter});
\item the magnetostatic contribution
\begin{equation*}
\E_{\mathrm{mag}}(\mm)
= \frac{\mu_0}{2} \int_{\real^3} \abs{\Hstray(\mm)}^2 \dx,
\end{equation*}
where $\Hstray(\mm) = - \grad u$ denotes the stray field (in~\si{\ampere\per\meter}), with $u$ being the magnetostatic potential (in~\si{\ampere}), which solves the full-space transmission problem
\begin{subequations} \label{eq:transmission}
\begin{alignat}{2}
- \lapl u^{\mathrm{int}} &= - \Ms \, \div \mm &\quad& \text{in } \Omega, \\
- \lapl u^{\mathrm{ext}} &= 0 && \text{in } \real^3\setminus\overline{\Omega}, \\
u^{\mathrm{ext}}-u^{\mathrm{int}} &= 0 && \text{on } \Gamma, \\
(\grad u^{\mathrm{ext}}- \grad u^{\mathrm{int}}) \cdot \nn &= - \Ms \, \mm \cdot \nn && \text{on } \Gamma, \\
u(\xx) &= \bigO{1/\abs{\xx}} && \text{as } \abs{\xx}\to\infty.
\end{alignat}
\end{subequations}
Here, $\nn:\Gamma\to\sphere$ denotes the outward-pointing unit normal vector to $\Gamma$.
\end{itemize}
In this work, the total energy $\E(\cdot)$ in~\eqref{eq:functionalDerivative} also comprises a term associated with the DMI~\cite{dzyaloshinskii1958,moriya1960}.
This energy contribution is phenomenologically introduced for systems with a broken symmetry due to different interface crystal configurations as a linear combination of the so-called Lifshitz invariants, i.e., the components
of the chirality tensor $\vec{A} = (a_{ij})_{1 \leq i,j \leq 3} = \Grad\mm \times \mm$; see~\cite{by1989,bh1994}.
The choice of the appropriate DMI form depends on the crystal structure of the material and on the geometry of the sample under consideration.
This choice in turn determines the specific expression of the effective field (according to~\eqref{eq:functionalDerivative}) and the boundary conditions on $\Gamma$, which are chosen in agreement with those satisfied by the solution of the Euler--Lagrange equations associated with the energy minimization problem
\begin{equation} \label{eq:minimization}
\min_{\abs{\mm}=1} \E(\mm).
\end{equation}
In micromagnetics, two main DMI forms are usually considered:
\begin{itemize}
\item For helimagnetic materials~\cite{moriya1960}, the DMI is obtained by taking as Lifshitz invariants the trace of the matrix $\vec{A}$, i.e.,
\begin{equation*}
\trace\vec{A} = \sum_{1 \leq i \leq 3} a_{ii} = (\curl\mm)\cdot\mm.
\end{equation*}
The so-called \emph{bulk} DMI energy contribution then takes the form
\begin{equation} \label{eq:bulkDMI}
\E_{\mathrm{bDMI}}(\mm)
= D \int_{\Omega} (\curl\mm)\cdot\mm \, \dx,
\end{equation}
so that the resulting effective field term and boundary conditions on $\Gamma$ are given by
\begin{equation*}
\HH_{\mathrm{eff,bDMI}}(\mm)
= - \frac{2D}{\mu_0 \Ms} \curl\mm
\quad \text{and} \quad
2A \, \de_{\nn} \mm + D \, \mm\times\nn = \0.
\end{equation*}
\item Another type of DMI is due to the interfaces between different materials which break inversion symmetry~\cite{cl1998}.
For a magnetic thin film aligned with the $x_1x_2$-plane, the Lifshitz invariants are given by
\begin{equation*}
a_{12} - a_{21}
= m_3(\de_1 m_1 + \de_2 m_2) - (m_1 \, \de_1 m_3 + m_2 \, \de_2 m_3).
\end{equation*}
The so-called \emph{interfacial} DMI energy contribution thus takes the form
\begin{equation} \label{eq:interDMI}
\E_{\mathrm{iDMI}}(\mm)
= D \int_{\Omega} [m_3(\de_1 m_1 + \de_2 m_2) - (m_1 \, \de_1 m_3 + m_2 \, \de_2 m_3)] \, \dx,
\end{equation}
so that the resulting effective field term and boundary conditions on $\Gamma$ are given by
\begin{equation*}
\HH_{\mathrm{eff,iDMI}}(\mm)
= - \frac{2D}{\mu_0 \Ms}
\begin{pmatrix}
- \de_1 m_3 \\
- \de_2 m_3 \\
\de_1 m_1 + \de_2 m_2
\end{pmatrix}
\quad \text{and} \quad
2A \, \de_{\nn} \mm + D (\vec{e}_3 \times \nn) \times \mm = \0.
\end{equation*}
\end{itemize}
In~\eqref{eq:bulkDMI}--\eqref{eq:interDMI}, the constant $D \in \real$ is the DMI constant (in \si{\joule\per\square\meter})\footnote{Consider the energy $w_D$ in~\cite[equation~(4)]{bh1994} for the crystallographic class $C_n$ ($n=3,4,6$). The bulk DMI energy~\eqref{eq:bulkDMI} corresponds to the last two terms of $w_D$, i.e., $D_1 = 0$ and $D_2 = D_3 = -D$ (crystallographic subclass $D_n$). The interfacial DMI energy~\eqref{eq:interDMI} corresponds to the first term of $w_D$, i.e., $D_1=D$ and $D_2 = D_3 = 0$ (crystallographic subclass $C_{nv}$).}.
The sign of $D$ determines the chirality of the system, which, in the case of a skyrmion state, defines the sense of rotation of the magnetization along the skyrmion diameter~\cite{scrtf2013,kp2015a}.
\subsection{Problem formulation} \label{sec:model}
To simplify the notation, in our analysis, we restrict ourselves to the case in which the energy solely comprises exchange and DMI.
We refer to~\cite{akt2012,bffgpprs2014,ahpprs2014,akst2014,dpprs2017} for the design and the analysis of effective tangent plane integrators for the standard energy terms (exchange, anisotropy, Zeeman, and magnetostatic).
Moreover, without loss of generality, we assume that $D>0$ and consider the prototypical case of bulk DMI, because it is characterized by a short notation involving the curl operator.
The same approach and the same results hold for all possible choices of the Lifshitz invariants.
\par
After a suitable nondimensionalization\footnote{We rescale the time according to the transformation $t'=\gamma_0 \Ms t$. We define the rescaled effective field by $\heff=\Heff/\Ms$ and the rescaled energy by $\E'=\E/(\mu_0 \Ms^2)$. However, to simplify the notation, we neglect all $'$-superscripts.}, the initial boundary value problem in which we are interested takes the form
\begin{subequations} \label{eq:llg:IBVP}
\begin{alignat}{2}
\label{eq:llg:IBVP:llg}
\mmt & = - \mm \times \heff(\mm) + \alpha \, \mm \times \mmt &\quad& \text{in } \Omega\times(0,\infty), \\
\label{eq:llg:IBVP:bc}
2 \lex^2 \, \de_{\nn} \mm & = - \ldm \, \mm\times\nn  && \text{on } \Gamma\times(0,\infty), \\
\mm(0) & = \mm^0 && \text{in } \Omega,
\end{alignat}
\end{subequations}
where the effective field $\heff(\mm)$ is determined by the energy functional
\begin{equation} \label{eq:llg:energy}
\E(\mm)
= \frac{\lex^2}{2} \norm[\LL^2(\Omega)]{\Grad\mm}^2
+ \frac{\ldm}{2} \inner{\curl\mm}{\mm}
\end{equation}
according to the relation
\begin{equation} \label{eq:llg:effective}
\heff(\mm) = - \frac{\delta\E(\mm)}{\delta \mm} = \lex^2 \Lapl\mm - \ldm \curl\mm.
\end{equation}
The positive quantities $\lex=\sqrt{2A/(\mu_0 \Ms^2)}$ and $\ldm=2D/(\mu_0 \Ms^2)$ denote the exchange length and the DMI length (both measured in~\si{\meter}), respectively.
\par
Since $\norm[\LL^2(\Omega)]{\curl\mm} \leq \sqrt{2} \norm[\LL^2(\Omega)]{\Grad\mm}$, using the weighted Young inequality
\begin{equation} \label{eq:young}
ab \leq \frac{\eps a^2}{2} + \frac{b^2}{2 \eps} \quad \text{for any } a,b \in \real \text{ and } \eps>0,
\end{equation}
it is easy to see that the energy~\eqref{eq:llg:energy} satisfies the condition
\begin{equation} \label{eq:energyNormEquivalence}
\frac{\lex^2}{4} \norm[\LL^2(\Omega)]{\Grad\mm}^2 - \frac{\ldm^2}{2 \lex^2} \norm[\LL^2(\Omega)]{\mm}^2
\leq \E(\mm)
\leq \frac{\lex^2 + \ldm^2}{2} \norm[\LL^2(\Omega)]{\Grad\mm}^2 + \frac{1}{4} \norm[\LL^2(\Omega)]{\mm}^2.
\end{equation}
For any $T>0$, we define the space-time cylinder by $\Omega_T := \Omega \times (0,T)$.
Moreover, we denote by $\inner{\cdot}{\cdot}$ the scalar product in $\LL^2(\Omega)$, by $\edual{\cdot}{\cdot}$ the duality pairing between $\HH^{-1/2}(\Gamma)$ and $\HH^{1/2}(\Gamma)$, and by $\gamma_T: \Hcurl{\Omega} \to \HH^{-1/2}(\Gamma)$ the tangential trace operator, which satisfies $\gamma_T[\uu] = \uu\times\nn\vert_{\Gamma}$ for any smooth function $\uu$ as well as the Green formula
\begin{equation} \label{eq:green}
\edual{\gamma_T[\uu]}{\pphi}
= \inner{\uu}{\curl\pphi} - \inner{\curl\uu}{\pphi}
\quad \text{for all } \uu \in \Hcurl{\Omega} \text{ and } \pphi \in \HH^1(\Omega);
\end{equation}
see, e.g., \cite[Lemma~2.1.4]{bbf2013}.
\par
We conclude this section by extending the notion of a weak solution introduced in~\cite{as1992} to the present setting.
\begin{definition} 
Let $\mm^0 \in \HH^1(\Omega)$ satisfy $\abs{\mm}=1$ a.e.\ in $\Omega$.
A vector field $\mm:\Omega \times (0,\infty) \to \real^3$ is called a weak solution of~\eqref{eq:llg:IBVP} if, for any $T>0$, the following properties are satisfied:
\begin{itemize}
\item[\rm(i)] $\mm\in \HH^1(\Omega_T) \cap L^{\infty}(0,T;\HH^1(\Omega))$ with $\abs{\mm}=1$ a.e.\ in $\Omega_T$;
\item[\rm(ii)] $\mm(0)=\mm^0$ in the sense of traces;
\item[\rm(iii)] For all $\vvphi\in\HH^1(\Omega_T)$, it holds that
\begin{equation} \label{eq:weak:variational}
\begin{split}
& \int_0^T \inner{\mmt(t)}{\vvphi(t)} \, \dt \\
& \quad = \lex^2 \int_0^T \inner{\mm(t)\times\Grad\mm(t)}{\Grad\vvphi(t)} \, \dt
- \ldm \int_0^T \inner{\curl\mm(t)}{\mm(t)\times\vvphi(t)} \, \dt \\
& \qquad - \frac{\ldm}{2} \int_0^T \edual{\gamma_T[\mm(t)]}{\mm(t)\times\vvphi(t)} \, \dt
+ \alpha \int_0^T \inner{\mm(t)\times\mmt(t)}{\vvphi(t)} \, \dt;
\end{split}
\end{equation}
\item[\rm(iv)] It holds that
\begin{equation} \label{eq:weak:energyLaw}
\E(\mm(T)) + \alpha \int_0^T \norm[\LL^2(\Omega)]{\mmt(t)}^2 \dt
\leq \E(\mm^0).
\end{equation}
\end{itemize}
\end{definition}
The variational formulation~\eqref{eq:weak:variational} comes from a weak formulation of~\eqref{eq:llg:IBVP:llg} in the space-time domain.
The boundary conditions~\eqref{eq:llg:IBVP:bc} are enforced as natural boundary conditions.
In particular, the term with $\edual{\cdot}{\cdot}$ arises from integrating by parts the exchange contribution and using~\eqref{eq:llg:IBVP:bc}.
The energy inequality~\eqref{eq:weak:energyLaw} is a weak counterpart of the dissipative energy law
\begin{equation*}
\ddt\E(\mm(t)) = - \alpha \norm[\LL^2(\Omega)]{\mmt(t)}^2 \leq 0
\quad \text{for all } t>0
\end{equation*}
satisfied by any sufficiently smooth solution of~\eqref{eq:llg:IBVP}.
\begin{remark}
Taking the scalar product of~\eqref{eq:llg:IBVP:llg} with $\mm$, we deduce that $\mm\cdot\mmt = 0$.
In particular, since $\de_t \abs{\mm}^2 = 2 \, \mm\cdot\mmt= 0$, it follows that a sufficiently smooth solution of~\eqref{eq:llg:IBVP:llg} satisfies the constraint $\abs{\mm}=1$, provided that it is satisfied by the initial condition.
\end{remark}
\section{Numerical algorithms and main result} \label{sec:algorithms}
In this section, we introduce three algorithms for the numerical approximation of the problem discussed in Section~\ref{sec:model} and we state the main convergence result.
\subsection{Preliminaries}
For the time discretization, given an integer $N>0$ and a final time $T>0$, we consider a uniform partition of the time interval $(0,T)$ with time-step size $k := T/N$, i.e., $t_i := ik$ for all $0 \leq i \leq N$.
For the spatial discretization, we assume $\Omega$ to be a polyhedral domain with Lipschitz boundary and consider a $\kappa$-quasi-uniform family $\{ \Th \}_{h>0}$ of regular tetrahedral meshes of $\Omega$ parametrized by the mesh size $h>0$, i.e., there exists $\kappa \geq 1$, independent of $h$, such that $\Th$ is $\kappa$-shape-regular and $\kappa^{-1} h \leq \diam(K)$ for all $K \in \Th$.
We denote by $\Nh$ the set of vertices of $\Th$.
For any $K \in \Th$, we denote by $\mathcal{P}^1(K)$ the space of linear polynomials on $K$.
We consider the space
\begin{equation*}
\mathcal{S}^1(\Th) = \left\{v_h \in C^0(\overline{\Omega}): v_h \vert_K \in \mathcal{P}^1(K) \text{ for all } K \in \Th \right\}
\end{equation*}
of piecewise linear and globally continuous functions from $\Omega$ to $\real$.
The classical basis for this finite-dimensional linear space is given by the set of the nodal hat functions $\left\{\vphi_{\zz}\right\}_{\zz\in\Nh}$, which satisfy $\vphi_{\zz}(\zz')=\delta_{\zz,\zz'}$ for all $\zz,\zz'\in\Nh$.
We assume that all off-diagonal entries of the so-called stiffness matrix are nonpositive, i.e.,
\begin{equation} \label{eq:angleCondition}
\inner{\grad\vphi_{\zz}}{\grad\vphi_{\zz'}} \leq 0
\quad \text{for all } \zz, \zz' \in \N_h \text{ with } \zz \neq \zz'.
\end{equation}
This requirement is usually referred to as \emph{angle condition}, since it is satisfied if the measure of all dihedral angles of all tetrahedra of the mesh is less than or equal to $\pi/2$.
\par
Any solution of LLG is characterized by the nonconvex pointwise constraint $\abs{\mm}=1$ and by the orthogonality property $\mm\cdot\mmt = 0$.
To mimic these properties at the discrete level, we require them to be satisfied only at the nodes of the mesh.
To this end, we introduce the \emph{set of admissible discrete magnetizations}
\begin{equation*}
\Mh := \left\{\pphih \in \mathcal{S}^1(\Th)^3: \abs{\pphih(\zz)}=1 \text{ for all } \zz \in \Nh \right\}
\end{equation*}
and, for $\ppsih \in \mathcal{S}^1(\Th)^3$, the linear space
\begin{equation} \label{eq:discreteTangentSpace}
\Kh(\ppsih) := \left\{\pphih \in \mathcal{S}^1(\Th)^3 : \ppsih(\zz) \cdot \pphih(\zz) = 0 \text{ for all } \zz \in \Nh \right\},
\end{equation}
which we call the \emph{discrete tangent space} of $\ppsih$.
\subsection{Three tangent plane integrators}
Using the well-known formula
\begin{equation*}
\vec{a}\times(\vec{b}\times\vec{c})=(\vec{a}\cdot\vec{c})\vec{b} - (\vec{a}\cdot\vec{b})\vec{c} \quad \text{for all } \vec{a},\vec{b},\vec{c} \in\real^3,
\end{equation*}
\eqref{eq:llg:IBVP:llg} can be formally rewritten in the form
\begin{equation} \label{eq:llg:alternative}
\alpha \, \mmt
+ \mm\times\mmt
= \heff(\mm) - [\heff(\mm)\cdot\mm]\mm.
\end{equation}
Observing that this equation is linear with respect to the time derivative $\mmt$, we introduce the free variable $\vv=\mmt$.
For any $t \in (0,T)$, $\vv(t)$ belongs to the tangent space of $\sphere$ at $\mm(t)$.
Taking this orthogonality and the expression~\eqref{eq:llg:effective} of the effective field into account, we obtain the following variational formulation:
Find $\vv(t) \in \LL^2(\Omega)$ with $\mm(t)\cdot\vv(t)=0$ a.e.\ in $\Omega$ such that
\begin{equation} \label{eq:tps1continuous}
\begin{split}
& \alpha \inner{\vv(t)}{\pphi}
+ \inner{\mm(t)\times\vv(t)}{\pphi} \\
& \quad = - \lex^2 \inner{\Grad\mm(t)}{\Grad\pphi}
- \frac{\ldm}{2} \inner{\curl\mm(t)}{\pphi}
- \frac{\ldm}{2} \inner{\mm(t)}{\curl\pphi}
\end{split}
\end{equation}
for all $\pphi\in \HH^1(\Omega)$ satisfying $\mm(t)\cdot\pphi=0$ a.e.\ in $\Omega$.
To obtain~\eqref{eq:tps1continuous}, the boundary integral which arises from integrating by parts the exchange contribution and using the boundary conditions~\eqref{eq:llg:IBVP:bc} is rewritten as a volume integral by using~\eqref{eq:green}.
Note that, since the test function $\pphi$ belongs to the tangent space of the sphere at $\mm(t)$, in~\eqref{eq:tps1continuous} the term corresponding to the last term (strongly nonlinear in $\mm$) on the right-hand side of~\eqref{eq:llg:alternative} vanishes.
\par
For any time-step $0 \leq i \leq N-1$, given the approximate current magnetization $\mmh^i \approx \mm(t_i)$, we compute $\vvh^i \approx \vv(t_i)$ by a Galerkin discretization of~\eqref{eq:tps1continuous} based on the discrete tangent space $\Kh(\mmh^i)$ introduced in~\eqref{eq:discreteTangentSpace}.
The computed quantity $\vvh^i \in \Kh(\mmh^i)$ is then used to update the current magnetization $\mmh^i \approx \mm(t_i)$ to the new value $\mmh^{i+1} \approx \mm(t_{i+1})$ via a first-order time-stepping.
To ensure that the discrete magnetization belongs to the set of admissible discrete magnetizations $\Mh$, the nodal projection is applied.
\par
The resulting scheme, summarized in the following algorithm, extends the method proposed by~\cite{alouges2008a} to the present situation.
\begin{algorithm}[first-order tangent plane scheme, TPS1] \label{alg:tps1}
Input:
$\mmh^0 \in \Mh$. \\
Loop:
For all $0 \leq i \leq N-1$, iterate:
\begin{enumerate} [label={\rm(\roman*)}]
\item Compute $\vvh^i \in \Kh(\mmh^i)$ such that, for all $\pphih \in \Kh(\mmh^i)$, it holds that
\begin{equation} \label{eq:tps1}
\begin{split}
& \alpha \inner{\vvh^i}{\pphih}
+ \inner{\mmh^i\times\vvh^i}{\pphih}
+ \lex^2 \theta k \inner{\Grad\vvh^i}{\Grad\pphih} \\
& \quad = - \lex^2 \inner{\Grad\mmh^i}{\Grad\pphih}
- \frac{\ldm}{2} \inner{\curl\mmh^i}{\pphih}
- \frac{\ldm}{2} \inner{\mmh^i}{\curl\pphih}.
\end{split}
\end{equation}
\item Define $\mmh^{i+1} \in \Mh$ by $\displaystyle \mmh^{i+1}(\zz) := \frac{\mmh^i(\zz) + k \vvh^i(\zz)}{\abs{\mmh^i(\zz) + k \vvh^i(\zz)}}$ for all $\zz\in\Nh$.
\label{item:tps1-2}
\end{enumerate}
Output:
Sequence of discrete functions $\left\{(\vvh^i,\mmh^{i+1})\right\}_{0 \leq i \leq N-1}$.
\end{algorithm}
In~\eqref{eq:tps1}, the parameter $0 \leq \theta \leq 1$ modulates the `degree of implicitness' of the method in the treatment of the leading-order exchange contribution of the effective field.
\par
In the following algorithm, we state a projection-free variant of Algorithm~\ref{alg:tps1}, where step~\ref{item:tps1-2} is replaced by a simple linear first-order time-stepping.
Note that, omitting the nodal projection, the pointwise constraint $\abs{\mm}=1$ is not explicitly enforced by the numerical scheme.
\begin{algorithm}[projection-free first-order tangent plane scheme, PF-TPS1] \label{alg:pftps1}
Input:
$\mmh^0 \in \mathcal{S}^1(\Th)^3$. \\
Loop:
For all $0 \leq i \leq N-1$, iterate:
\begin{enumerate} [label={\rm(\roman*)}]
\item Compute $\vvh^i \in \Kh(\mmh^i)$ such that, for all $\pphih \in \Kh(\mmh^i)$, it holds that
\begin{equation} \label{eq:pftps1}
\begin{split}
& \alpha \inner{\vvh^i}{\pphih}
+ \inner{\mmh^i\times\vvh^i}{\pphih}
+ \lex^2 \theta k \inner{\Grad\vvh^i}{\Grad\pphih} \\
& \quad = - \lex^2 \inner{\Grad\mmh^i}{\Grad\pphih}
- \frac{\ldm}{2} \inner{\curl\mmh^i}{\pphih}
- \frac{\ldm}{2} \inner{\mmh^i}{\curl\pphih}.
\end{split}
\end{equation}
\item Define $\mmh^{i+1} := \mmh^i + k \vvh^i \in \mathcal{S}^1(\Th)^3$.
\label{item:pftps1-2}
\end{enumerate}
Output:
Sequence of discrete functions $\left\{(\vvh^i,\mmh^{i+1})\right\}_{0 \leq i \leq N-1}$.
\end{algorithm}
The idea of removing the nodal projection from the tangent plane scheme goes back to~\cite{ahpprs2014} for LLG and has been inspired by~\cite{bartels2016}, where the same principle is applied to a certain class of geometrically constrained PDEs, e.g., the harmonic map heat flow.
\par
In~\cite{akst2014}, the authors extend the tangent plane scheme of~\cite{alouges2008a} to improve the formal convergence order in time of the method.
If the tangential update $\vv$ takes the form
\begin{equation} \label{eq:tps2:v}
\vv(t) = \mmt(t) + \frac{k}{2} \mathbb{P}_{\mm(t)} [\partial_{tt}\mm(t)],
\end{equation}
where $\mathbb{P}_{\uu} : \real^3 \to \spann(\uu)^{\perp}$ denotes the orthogonal projection onto $\spann(\uu)^{\perp}$ for any $\uu\in\sphere$, a sufficiently smooth solution $\mm$ of LLG satisfies the Taylor expansion
\begin{equation} \label{eq:tps2:normalization}
\mm(t+k) = \frac{\mm(t) + k \vv(t)}{\abs{\mm(t) + k \vv(t)}} + \mathcal{O}(k^3).
\end{equation}
Differentiating~\eqref{eq:llg:alternative} with respect to time and proceeding as in~\cite[Section~6]{akst2014}, one obtains that, up to a residual term of order $\mathcal{O}(k^2)$, the tangential update~\eqref{eq:tps2:v} can be characterized as the solution of the following variational problem:
Find $\vv(t) \in \HH^1(\Omega)$ with $\mm(t)\cdot\vv(t)=0$ a.e.\ in $\Omega$ such that
\begin{equation} \label{eq:tps2continuous}
\begin{split}
& \alpha \inner{\vv(t)}{\pphi}
+ \frac{1}{2} k \inner{[-\lex^2 \abs{\Grad\mm(t)}^2 - \ldm(\curl\mm(t))\cdot\mm(t)]\vv(t)}{\pphi} \\
& \ + \inner{\mm(t)\times\vv(t)}{\pphi}
+ \frac{\lex^2}{2} k \inner{\Grad\vv(t)}{\Grad\pphi}
+ \frac{\ldm}{4} k \inner{\curl\vv(t)}{\pphi}
+ \frac{\ldm}{4} k \inner{\vv(t)}{\curl\pphi}
\\
& \quad = - \lex^2 \inner{\Grad\mm(t)}{\Grad\pphi}
- \frac{\ldm}{2} \inner{\curl\mm(t)}{\pphi}
- \frac{\ldm}{2} \inner{\mm(t)}{\curl\pphi}
\end{split}
\end{equation}
for all $\pphi\in \HH^1(\Omega)$ satisfying $\mm(t)\cdot\pphi=0$ a.e.\ in $\Omega$.
To obtain an effective numerical method, we use the same predictor-corrector approach used for Algorithms~\ref{alg:tps1}--\ref{alg:pftps1}:
For any time-step $0 \leq i \leq N-1$, given the approximation $\mmh^i \approx \mm(t_i)$, we compute $\vvh^i \approx \vv(t_i)$ by a Galerkin discretization of~\eqref{eq:tps2continuous} based on $\Kh(\mmh^i)$.
Then, with~\eqref{eq:tps2:normalization} in mind, we define $\mmh^{i+1} \approx \mm(t_{i+1})$ in $\Mh$ as the nodal projection of $\mmh^i + k \vvh^i$.
\par
However, in order to obtain a well-defined scheme, following~\cite{akst2014}, we perform two higher-order modifications of~\eqref{eq:tps2continuous}.
Firstly, to ensure the well-posedness of the variational problem, we proceed as follows:
Given $M>0$, we define the cut-off function $W_M: \real \to \real$ by
\begin{equation*}
W_M(s) =
\begin{cases}
\displaystyle
\alpha + k \min\{ s, M \} / 2 & \text{if } s \geq 0, \\
\displaystyle
2\alpha^2 / (2 \alpha + k \min\{ -s, M \}) & \text{if } s < 0.
\end{cases}
\end{equation*}
By construction, it holds that
\begin{equation} \label{eq:propertyCutOff}
W_M(s) \geq 2 \alpha^2 / (2 \alpha + M k)
\quad \text{and} \quad
\big\lvert W_M(s) - \alpha\big\rvert \leq M k/2
\quad \text{for all } s \in \real;
\end{equation}
see, e.g., \cite[Lemma~12]{dpprs2017}.
In the variational formulation~\eqref{eq:tps2continuous}, we then replace
\begin{equation*}
\alpha \inner{\vv(t)}{\pphi}
+ \frac{1}{2} k \inner{[-\lex^2 \abs{\Grad\mm(t)}^2 - \ldm(\curl\mm(t))\cdot\mm(t)]\vv(t)}{\pphi}
\text{ by }
\inner{W_M(\lambda(\mm(t)))\vv(t)}{\pphi},
\end{equation*}
where
\begin{equation*}
\lambda(\mm)
= \heff(\mm)\cdot\mm
= -\lex^2 \abs{\Grad\mm}^2 - \ldm(\curl\mm)\cdot\mm.
\end{equation*}
Note that the function $\lambda(\mm)$ is also the Lagrange multiplier associated with the constraint $\abs{\mm}=1$ in the constrained minimization problem~\eqref{eq:minimization}.
If $M>0$ is sufficiently large, this modification introduces a consistency error of order $\mathcal{O}(k^2)$ in~\eqref{eq:tps2continuous}.
In particular, to ensure this, we define $M: \real_{>0} \to \real_{>0}$ by $M(k) := \abs{k \log k}^{-1}$ for all $k>0$.
Note that $M$ satisfies the convergence properties
\begin{equation} \label{eq:propertyOfM}
M(k) \to \infty \quad \text{and} \quad M(k)k \to 0 \quad \text{as } k \to 0.
\end{equation}
Secondly, in the variational formulation~\eqref{eq:tps2continuous}, we replace
\begin{equation*}
\frac{\lex^2}{2} k \inner{\Grad\vv(t)}{\Grad\pphi}
\quad \text{by} \quad
\frac{\lex^2}{2} [1 + \rho(k)] k \inner{\Grad\vv(t)}{\Grad\pphi},
\end{equation*}
where the stabilization function $\rho: \real_{>0} \to \real_{>0}$ is defined by $\rho(k) := \abs{k \log k}$ for all $k>0$.
This artificial stabilization introduces a formal consistency error of order $\mathcal{O}(k^{2-\varepsilon})$ for any $0<\varepsilon<1$.
\par
In the following algorithm, we summarize the proposed extension of the tangent plane scheme of~\cite{akst2014} to the present setting.
\begin{algorithm}[(almost) second-order tangent plane scheme, TPS2] \label{alg:tps2}
Input:
$\mmh^0 \in \Mh$. \\
Loop:
For all $0 \leq i \leq N-1$, iterate:
\begin{enumerate} [label={\rm(\roman*)}]
\item Set $\lambda_h^i = -\lex^2 \abs{\Grad\mmh^i}^2 - \ldm(\curl\mmh^i)\cdot\mmh^i$.
\item Compute $\vvh^i \in \Kh(\mmh^i)$ such that, for all $\pphih \in \Kh(\mmh^i)$, it holds that
\begin{equation} \label{eq:tps2}
\begin{split}
& \inner{W_{M(k)}(\lambda_h^i)\vvh^i}{\pphih}
+ \inner{\mmh^i\times\vvh^i}{\pphih}
+ \frac{\lex^2}{2} k [1 + \rho(k)] \inner{\Grad\vvh^i}{\Grad\pphih} \\
& \quad + \frac{\ldm}{4} k \inner{\vvh^i}{\curl\pphih}
+ \frac{\ldm}{4} k \inner{\curl\vvh^i}{\pphih} \\
& \qquad = - \lex^2 \inner{\Grad\mmh^i}{\Grad\pphih}
- \frac{\ldm}{2} \inner{\curl\mmh^i}{\pphih}
- \frac{\ldm}{2} \inner{\mmh^i}{\curl\pphih}.
\end{split}
\end{equation}
\item Define $\mmh^{i+1} \in \Mh$ by $\displaystyle \mmh^{i+1}(\zz) := \frac{\mmh^i(\zz) + k \vvh^i(\zz)}{\abs{\mmh^i(\zz) + k \vvh^i(\zz)}}$ for all $\zz\in\Nh$.
\end{enumerate}
Output:
Sequence of discrete functions $\left\{(\vvh^i,\mmh^{i+1})\right\}_{0 \leq i \leq N-1}$.
\end{algorithm}
For the proof that the three proposed algorithms are well-posed (if the time-step size is sufficiently small in the case of TPS2), we refer to Proposition~\ref{prop:wellposed} below.
\begin{remark}
The natural starting point for a hypothetical projection-free version of TPS2 would be the expansion
\begin{equation*}
\mm(t+k) = \mm(t) + k \vv(t) + \mathcal{O}(k^3),
\end{equation*}
for which a \emph{nontangential} update of the form
\begin{equation*}
\vv(t) = \mmt(t) + \frac{k}{2} \partial_{tt}\mm(t)
\end{equation*}
would be required.
In particular, it is not clear how to apply the tangent plane paradigm to this situation, where the update $\vv(t)$ has a nonzero component parallel to $\mm(t)$ in general, i.e., $\mm(t) \cdot \vv(t) = \frac{k}{2} \partial_{tt}\mm(t) \cdot \mm(t) \neq 0$, which needs to be taken into account in order to achieve a second-order accuracy.
\end{remark}
\subsection{Convergence result}
From any algorithm, we obtain two sequences of discrete functions $\{\mmh^i\}_{0 \leq i \leq N}$ and $\{\vvh^i\}_{0 \leq i \leq N-1}$.
We define the piecewise linear time reconstruction $\mmhk$ and the piecewise constant time reconstructions $\mmhk^\pm$ and $\vvhk^-$ by
\begin{equation} \label{eq:timeApprox}
\begin{split}
& \mmhk(t) := \frac{t-t_i}{k}\mmh^{i+1} + \frac{t_{i+1} - t}{k}\mmh^i, \\
& \mmhk^-(t) := \mmh^i,
\quad
\mmhk^+(t) := \mmh^{i+1},
\quad
\text{and}
\quad
\vvhk^-(t) := \vvh^i
\end{split}
\end{equation}
for all $0 \leq i \leq N-1$ and $t \in [t_i,t_{i+1})$.
The following theorem, which is the main result of the paper, states that the time reconstructions $\mmhk$ obtained by the three algorithms converge in an appropriate sense towards a weak solution of~\eqref{eq:llg:IBVP} as $h,k \to 0$.
\begin{theorem} \label{thm:main}
Let the approximate initial condition satisfy the convergence property
\begin{equation} \label{eq:convergenceMh0}
\mmh^0 \to \mm^0 \quad \text{in } \HH^1(\Omega) \quad \text{as } h \to 0.
\end{equation}
Moreover, for each algorithm, consider the following specific assumptions:
\begin{itemize}
\item For TPS1 (Algorithm~\ref{alg:tps1}), assume that the angle condition~\eqref{eq:angleCondition} is satisfied, that $1/2 \leq \theta \leq 1$, and that it holds that $k/h \to 0$ as $h,k \to 0$.
\item For PF-TPS1 (Algorithm~\ref{alg:pftps1}), assume that $1/2 < \theta \leq 1$.
\item For TPS2 (Algorithm~\ref{alg:tps2}), assume that the angle condition~\eqref{eq:angleCondition} is satisfied and that it holds that $k/h \to 0$ as $h,k \to 0$.
\end{itemize}
Then, for each algorithm, there exist a weak solution $\mm$ of~\eqref{eq:llg:IBVP} and a subsequence of $\{ \mmhk \}$ which converges weakly in $\HH^1(\Omega_T)$ towards $\mm$ as $h,k \to 0$.
\end{theorem}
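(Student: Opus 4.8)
The plan is to follow the now-classical compactness strategy for tangent plane schemes (as in \cite{alouges2008a,ahpprs2014,akst2014}), adapting each step to accommodate the DMI contribution. First I would establish a uniform energy estimate: testing the discrete variational problem~\eqref{eq:tps1} (resp.~\eqref{eq:pftps1},~\eqref{eq:tps2}) with $\pphih = \vvh^i \in \Kh(\mmh^i)$, using the orthogonality $\inner{\mmh^i\times\vvh^i}{\vvh^i} = 0$, and combining with the update rule, one controls $\E(\mmhk^+(t))$ and $k\sum_i \norm[\LL^2(\Omega)]{\vvh^i}^2$ in terms of $\E(\mmh^0)$. For the projected schemes (TPS1, TPS2) the nodal projection step~\ref{item:tps1-2} does not increase the discrete Dirichlet energy precisely because of the angle condition~\eqref{eq:angleCondition}; here one must also absorb the DMI term $\tfrac{\ldm}{2}\inner{\curl\mmh^i}{\vvh^i} + \tfrac{\ldm}{2}\inner{\mmh^i}{\curl\vvh^i}$, which is not of fixed sign — this is handled by the weighted Young inequality~\eqref{eq:young} and the norm equivalence~\eqref{eq:energyNormEquivalence}, at the cost of a Gr\"onwall-type argument in the time index. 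The CFL condition $k/h \to 0$ enters when estimating the consistency error introduced by the nodal projection (via an inverse estimate bounding $\norm[\LL^2]{\mmh^i + k\vvh^i} $ pointwise), while for PF-TPS1 the strict inequality $\theta > 1/2$ provides the extra dissipation $\lex^2(\theta - \tfrac12)k^2\norm[\LL^2]{\Grad\vvh^i}^2$ that makes the estimate unconditional. For TPS2 one additionally invokes the properties~\eqref{eq:propertyCutOff} of $W_{M(k)}$ and~\eqref{eq:propertyOfM} to control the modified bilinear form from below.

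Next I would extract weak limits: the energy bound gives $\mmhk \weakto \mm$ in $\HH^1(\Omega_T)$ (up to a subsequence), $\mmhk, \mmhk^\pm \weakstarto \mm$ in $L^\infty(0,T;\HH^1(\Omega))$, and $\vvhk^- \weakto \vv$ in $\LL^2(\Omega_T)$, with $\vv = \mmt$ identified from $\de_t \mmhk = \vvhk^-$ and the fact that $\norm[\LL^2]{\mmhk - \mmhk^\pm} \lesssim k\norm[\LL^2(\Omega_T)]{\vvhk^-} \to 0$. By the Rellich--Kondrachov theorem $\mmhk \to \mm$ strongly in $\LL^2(\Omega_T)$ and (a further subsequence) pointwise a.e., which is what I need to pass to the limit in the nonlinear products $\mmhk^- \times \Grad\mmhk^-$ and $\mmhk^- \times \vvhk^-$. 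The constraint $\abs{\mm} = 1$ a.e.\ in $\Omega_T$ follows as usual: for the projected schemes $\abs{\mmhk^+(\zz)} = 1$ at the nodes, so an interpolation-error plus inverse estimate gives $\norm[L^1(\Omega)]{\abs{\mmhk^+}^2 - 1} \lesssim h \norm[\LL^2]{\Grad\mmhk^+}^2 \to 0$; for PF-TPS1 one instead uses that the nodal constraint violation is controlled by $k\norm{\vvh^i}$, whence $\norm[L^\infty]{\abs{\mmhk^+(\zz)}^2 - 1} \lesssim k^2\norm{\vvh^i}^2$ summed in $i$ is $\bigO{k}$.

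The core of the argument is passing to the limit in the variational formulation. I would fix $\vvphi \in \HH^1(\Omega_T)$ (or first a smooth $\vvphi$, then density), build discrete test functions $\pphih^i := \interp[\mmh^i \times (\mmh^i \times \vvphi(t_i))] \in \Kh(\mmh^i)$ — the standard device that projects $\vvphi$ into the discrete tangent space — show $\pphih \to -\mm \times (\mm \times \vvphi) = \vvphi - (\mm\cdot\vvphi)\mm$ strongly in $\LL^2$ and weakly in $\HH^1$ (this uses $\abs{\mm} = 1$), insert it into the summed-in-time discrete equation, and pass term-by-term to the limit. The exchange term $\lex^2\inner{\Grad\mmhk^-}{\Grad\pphih}$ becomes $\lex^2\inner{\mm\times\Grad\mm}{\Grad\vvphi}$ after an integration by parts using $\abs{\mm}=1$; the $\theta k$-stabilization term vanishes because $k\norm[\LL^2(\Omega_T)]{\Grad\vvhk^-}^2$ is bounded. \textbf{The DMI terms are the new point:} $\tfrac{\ldm}{2}\inner{\curl\mmhk^-}{\pphih}$ and $\tfrac{\ldm}{2}\inner{\mmhk^-}{\curl\pphih}$ must combine, in the limit, into $\ldm\inner{\curl\mm}{\mm\times\vvphi} + \tfrac{\ldm}{2}\edual{\gamma_T[\mm]}{\mm\times\vvphi}$ via the Green formula~\eqref{eq:green}; one has to check that $\curl\pphih \weakto \curl(\mm\times(\mm\times\vvphi))$ and use that $\mm\times(\mm\times\vvphi) = (\mm\cdot\vvphi)\mm - \vvphi$ together with $\abs{\mm}=1$ to simplify. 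I expect \emph{this DMI limit passage} to be the main obstacle, both because of the boundary term — which requires the tangential trace operator $\gamma_T$ to be well behaved under the weak-$\HH(\curl)$ convergence available — and because the discrete formulation carries the DMI term in the un-integrated-by-parts form $\tfrac{\ldm}{2}(\inner{\curl\mmh^i}{\pphih} + \inner{\mmh^i}{\curl\pphih})$, so the reorganization into the weak-solution form must be done carefully at the limit rather than at the discrete level. Finally, the energy inequality~\eqref{eq:weak:energyLaw} is obtained by taking $\liminf$ in the discrete energy estimate, using weak lower semicontinuity of $\norm[\LL^2]{\Grad\cdot}$ and of the $L^2(\Omega_T)$-norm of $\vvhk^-$, the convergence~\eqref{eq:convergenceMh0} of the initial data, and the fact that the DMI part of $\E$ is weakly continuous (being linear in $\Grad\mm$ against the strongly convergent $\mm$); the higher-order modifications in TPS2 drop out by~\eqref{eq:propertyOfM}.
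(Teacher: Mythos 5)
Your overall architecture (energy estimate, compactness, identification of the limit, lower semicontinuity for the energy inequality) matches the paper's, and your stability and constraint steps are essentially the paper's arguments (the paper absorbs the DMI remainder directly rather than via Gr\"onwall, and the CFL condition arises from bounding the curl of the projection error $\mmh^{i+1}-\mmh^i-k\vvh^i$ by $h^{-1}k^2\norm[\LL^2(\Omega)]{\vvh^i}^2$ through the inverse estimate~\eqref{eq:inverse} and the geometric estimates~\eqref{eq:geoEstCor}, i.e., from the interplay of the projection with the DMI, not from the projection alone). The genuine gap is in your limit passage: you test with the \emph{double} cross product $\Interp[\mmh^i\times(\mmh^i\times\vvphi)]$, whereas the paper tests with $\Interp[\mmhk^-(t)\times\vvphi(t)]$. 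This is not cosmetic. The single cross product is what makes the exchange term tractable: in $\Grad(\mmhk^-\times\vvphi)=\Grad\mmhk^-\times\vvphi+\mmhk^-\times\Grad\vvphi$ the quadratic contribution $\inner{\Grad\mmhk^-}{\Grad\mmhk^-\times\vvphi}$ vanishes identically (scalar triple product with a repeated factor), and only $\inner{\Grad\mmhk^-}{\mmhk^-\times\Grad\vvphi}$ survives, a product of a weakly and a strongly convergent sequence. With your test function the gradient produces, among others, the term $\abs{\Grad\mmhk^-}^2\,(\mmhk^-\cdot\vvphi)$ --- the discrete Lagrange multiplier --- which is bounded only in $L^1(\Omega_T)$ and whose limit cannot be identified under mere weak $\HH^1$ convergence (energy concentration cannot be excluded). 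This is precisely the obstruction that the $\mm\times\vvphi$ structure of the weak formulation~\eqref{eq:weak:variational} is designed to avoid. Your argument is moreover internally inconsistent: the limits you write down, $\inner{\mm\times\Grad\mm}{\Grad\vvphi}$ and the DMI terms paired with $\mm\times\vvphi$, are those generated by the single cross product; with $\pphih\to\vvphi-(\mm\cdot\vvphi)\mm$ the DMI terms would instead converge to $\inner{\curl\mm}{\vvphi-(\mm\cdot\vvphi)\mm}$ and its companion, which do not match~\eqref{eq:weak:variational} without further (problematic) manipulation.

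Two smaller points. First, with the correct test function the DMI limit passage is not the main obstacle you anticipate: $\curl\Interp[\mmhk^-\times\vvphi]\weakto\curl(\mm\times\vvphi)$ in $\LL^2(\Omega_T)$ and $\mmhk^-\times\vvphi\to\mm\times\vvphi$ strongly, so both DMI terms pass to the limit directly, and the Green formula~\eqref{eq:green} is applied only to the \emph{limit} functions to create the term $\edual{\gamma_T[\mm]}{\mm\times\vvphi}$; no trace convergence of the discrete functions is needed. Second, for PF-TPS1 the delicate term is $\inner{\mmhk^-\times\vvhk^-}{\mmhk^-\times\vvphi}$, since the uniform $\LL^\infty$ bound on $\mmhk^-$ is lost; the paper handles it via the Lagrange identity together with the strong convergence $\mmhk^-\to\mm$ in $L^2(0,T;\HH^s(\Omega))$ for $s\geq 3/4$ (Lemma~\ref{lem:auxiliaryConv}) and the embedding $\HH^s(\Omega)\subset\LL^4(\Omega)$, which yield $\abs{\mmhk^-}^2\to1$ in $L^2(\Omega_T)$; your $\bigO{k}$ bound on the nodal constraint violation gives only $L^1$ control and does not suffice for this step.
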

\begin{remark} \label{rem:mainThm}
{\rm (i)} For the sake of brevity, we have considered the case in which the energy consists of only exchange and DMI.
Adopting the implicit-explicit approaches of~\cite{akt2012,bffgpprs2014,ahpprs2014,dpprs2017}, the schemes and the convergence result of Theorem~\ref{thm:main} can be extended to the case in which also the standard lower-order energy terms (and their discretizations) are included into the setting.
\par
{\rm (ii)}
One important aspect of the research on numerical integrators for LLG is related to the development of unconditionally convergent methods, for which the numerical analysis does not require to impose any CFL-type condition on $h$ and $k$.
Theorem~\ref{thm:main} states that this goal is achieved by PF-TPS1.
For TPS1 and TPS2, our analysis requires a mild CFL condition, which arises from the use of the nodal projection and the presence of the DMI.
If the energy comprises only the standard micromagnetic contributions (exchange, uniaxial anisotropy, Zeeman, and magnetostatic), but no DMI, then the convergence towards a weak solution of LLG is unconditional also for TPS1 (for $1/2 < \theta \leq 1$) and TPS2; see~\cite{akt2012,bffgpprs2014,akst2014,dpprs2017}.
\par
{\rm (iii)}
Since the treatment of the DMI requires the CFL condition $k/h \to 0$ as $h,k \to 0$ in our analysis, the result of Theorem~\ref{thm:main} holds for TPS2 also without artificial stabilization, i.e., $\rho \equiv 0$; see~\cite{akst2014,dpprs2017} for more details.
In this case, TPS2 is of full second order in time.
\par
{\rm (iv)}
In~Theorem~\ref{thm:main}, we state the best result that we are able to prove in terms of stability, i.e., with the weakest CFL condition on the discretization parameters.
The same convergence result can be also established at the price of more severe restrictions.
In particular, the result of Theorem~\ref{thm:main} holds
\begin{itemize}
\item[{\rm(a)}] without angle condition~\eqref{eq:angleCondition} for TPS1 and TPS2, if $k/h^2 \to 0$ as $h,k \to 0$;
\item[{\rm(b)}] also for $0 \leq \theta < 1/2$ for TPS1 and PF-TPS1, if $k/h^2 \to 0$ as $h,k \to 0$;
\item[{\rm(c)}] also for $\theta = 1/2$ for PF-TPS1, if $k/h \to 0$ as $h,k \to 0$.
\end{itemize}
\end{remark}
\section{Numerical experiments} \label{sec:numerics}
Before proceeding with the convergence analysis, we aim to show the effectivity of the proposed algorithms with three numerical experiments.
The computations presented in this section have been performed with our micromagnetic software Commics~\cite{commics,prsehhsmp2018}.
Our Python code is based on the open-source finite element library Netgen/NGSolve~\cite{ngsolve}.
The computation of the stray field, i.e., the numerical solution of the transmission problem~\eqref{eq:transmission}, is based on the hybrid FEM-BEM method of~\cite{fk1990}, which requires the evaluation of the double-layer integral operator associated with the Laplace equation; see, e.g., \cite[Section~4.4.1]{bffgpprs2014} or~\cite[Algorithm~12]{prs2018}.
This part of the code exploits the open-source Galerkin boundary element library BEM++~\cite{sbaps2015}.
For all three schemes, to discretize the classical lower-order contributions (anisotropy, Zeeman, and magnetostatic), we follow the explicit approaches of~\cite{bffgpprs2014,ahpprs2014,dpprs2017}.
Magnetization configurations are visualized with ParaView~\cite{agl2005}.
\subsection{Comparison of the integrators} \label{sec:numerics0}
We discretize the rescaled form~\eqref{eq:llg:IBVP} of LLG for a rectangular cuboid $\Omega$ of dimensions $\SI{80}{\nm}\times\SI{80}{\nm}\times\SI{10}{\nm}$, material parameters $\lex =$ \SI{10}{\nano\meter}, $\ldm =$ \SI{20}{\nano\meter}, and $\alpha=$ \num{0.08}, dimensionless final time $T=200$, and constant initial condition $\mm^0 \equiv (q,-q,\sqrt{1-2q^2})$ with $q=0.01$.
\begin{figure}[t]
\captionsetup[subfigure]{labelformat=empty}
\centering
\begin{subfigure}[b]{0.11\textwidth}
\includegraphics[width=\textwidth]{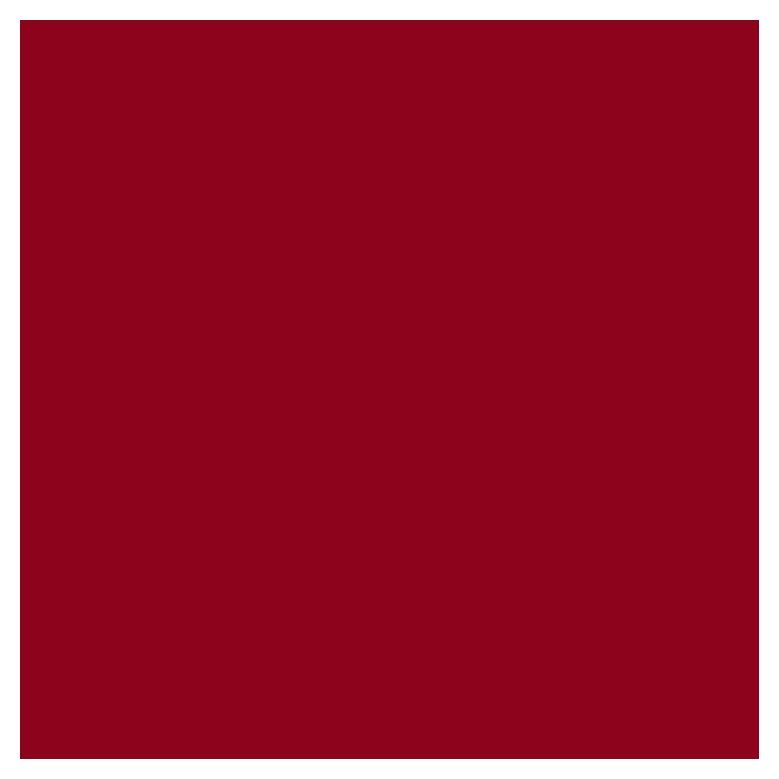}
\caption{$t=0$}
\end{subfigure}
\begin{subfigure}[b]{0.11\textwidth}
\includegraphics[width=\textwidth]{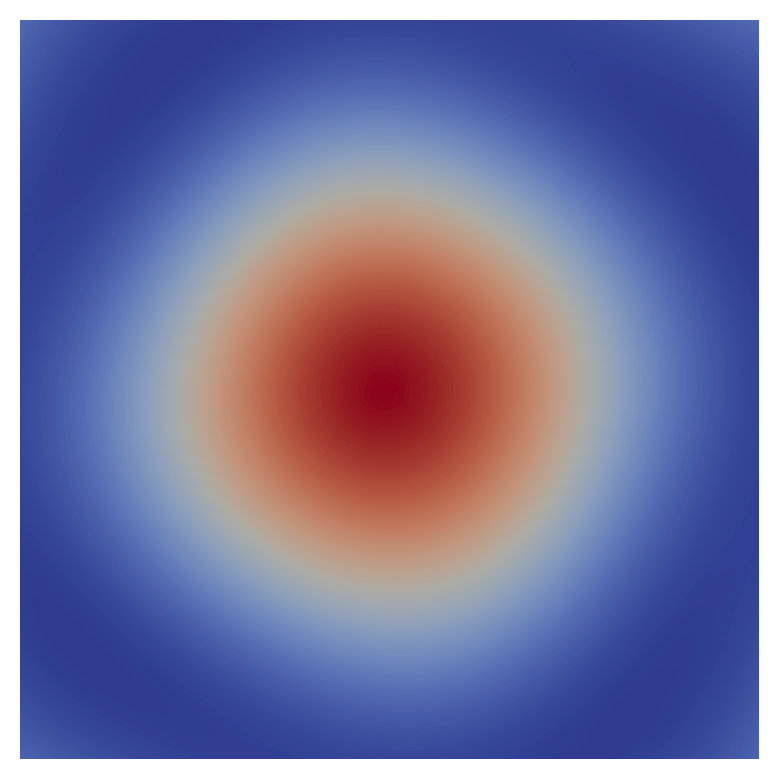}
\caption{$t=15$}
\end{subfigure}
\begin{subfigure}[b]{0.11\textwidth}
\includegraphics[width=\textwidth]{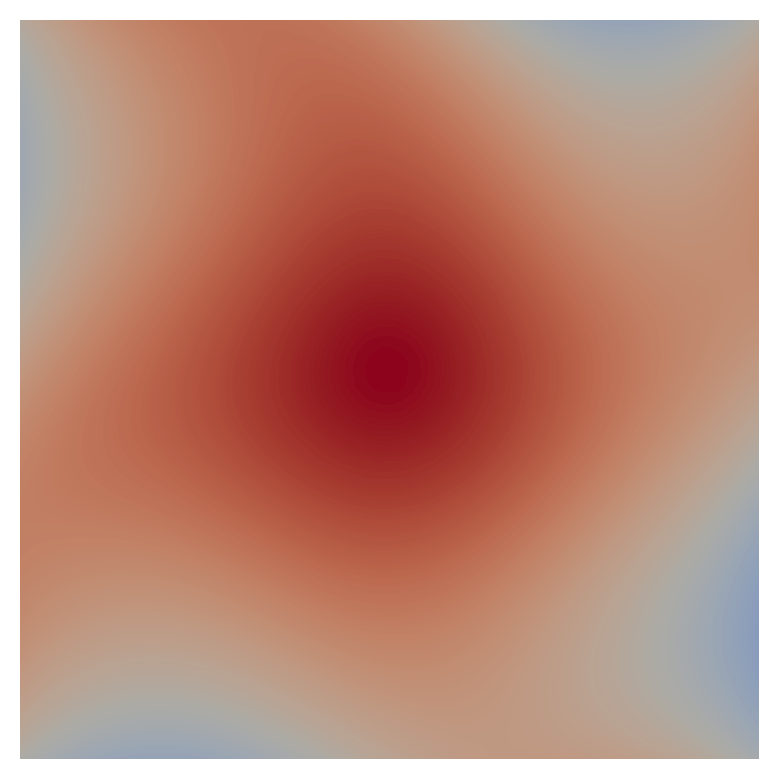}
\caption{$t=27$}
\end{subfigure}
\begin{subfigure}[b]{0.11\textwidth}
\includegraphics[width=\textwidth]{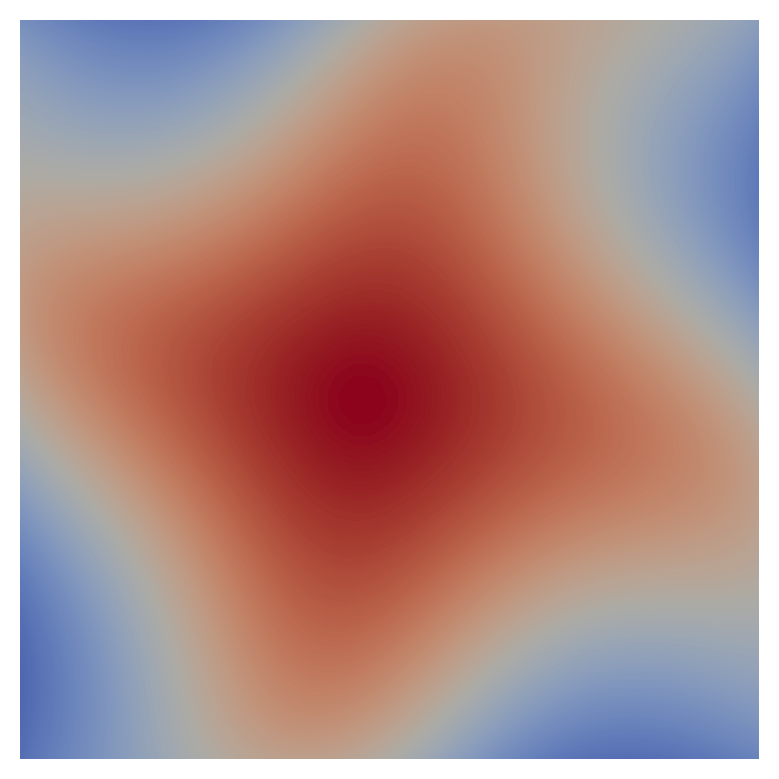}
\caption{$t=38$}
\end{subfigure}
\begin{subfigure}[b]{0.11\textwidth}
\includegraphics[width=\textwidth]{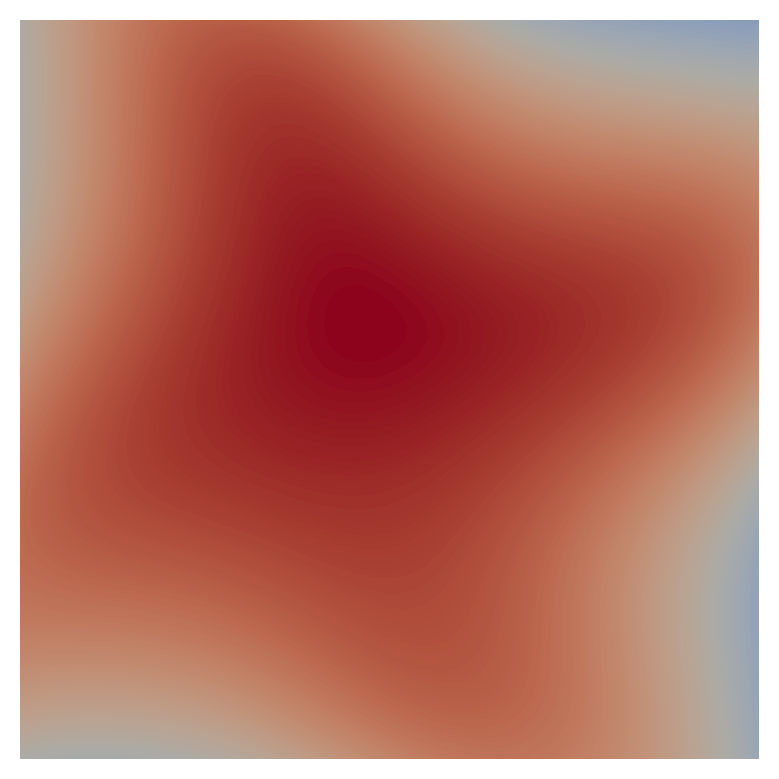}
\caption{$t=51$}
\end{subfigure}
\begin{subfigure}[b]{0.11\textwidth}
\includegraphics[width=\textwidth]{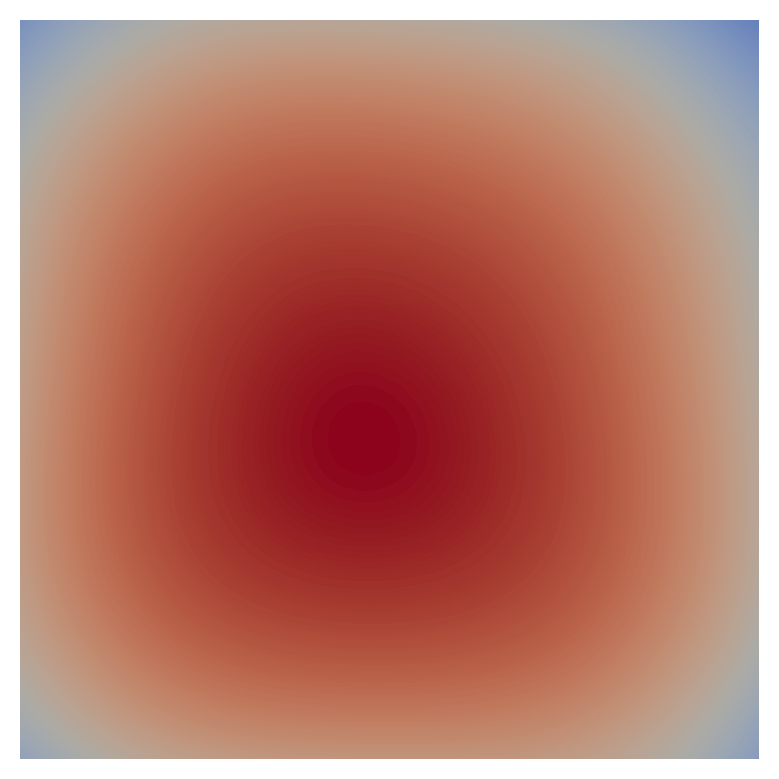}
\caption{$t=100$}
\end{subfigure}
\begin{subfigure}[b]{0.11\textwidth}
\includegraphics[width=\textwidth]{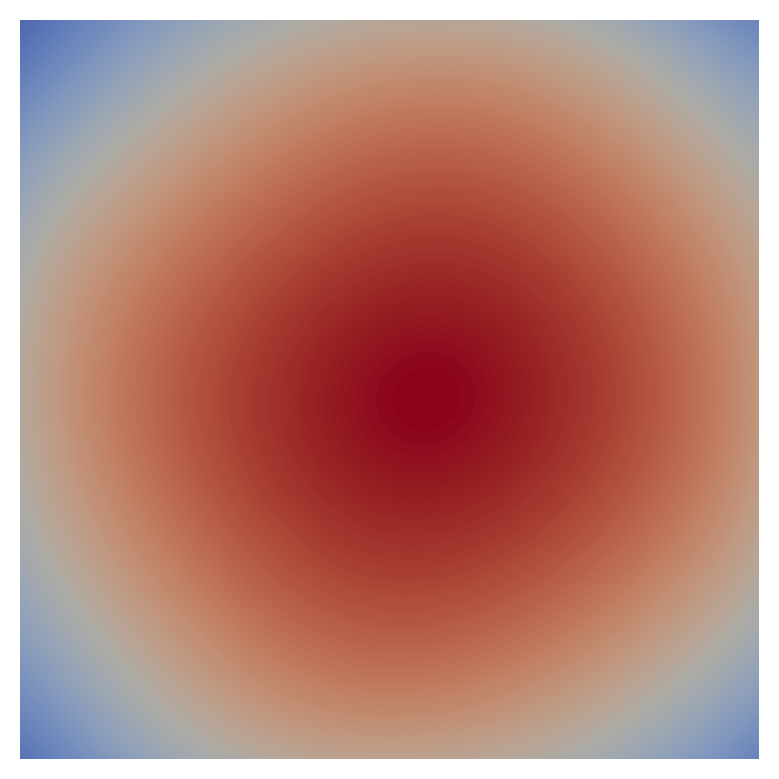}
\caption{$t=150$}
\end{subfigure}
\begin{subfigure}[b]{0.11\textwidth}
\includegraphics[width=\textwidth]{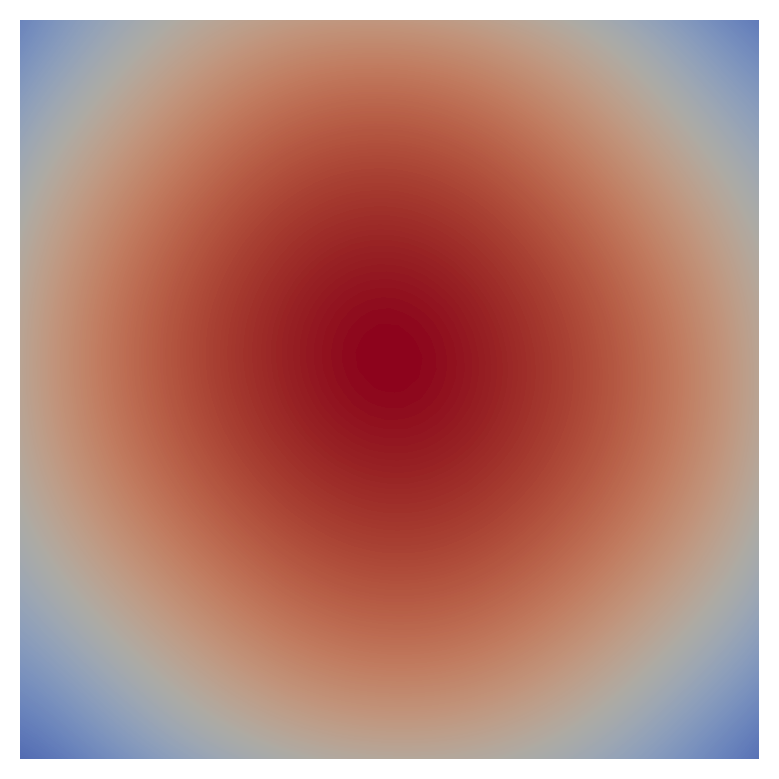}
\caption{$t=200$}
\end{subfigure}
\begin{subfigure}[b]{0.05\textwidth}
\includegraphics[width=\textwidth]{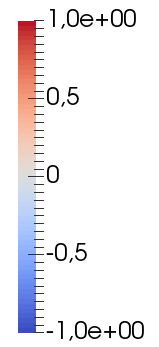}
\caption{}
\end{subfigure}
\caption{Experiment of Section~\ref{sec:numerics0}.
Snapshots of the magnetization dynamics.
The color scale refers to the third component $m_3$ of the magnetization.}
\label{fig:ex0:screenshots}
\end{figure}
For snapshots of the resulting magnetization dynamics, we refer to Figure~\ref{fig:ex0:screenshots}.
\par
\begin{figure}[ht]
\captionsetup[subfigure]{labelfont=rm}
\tdplotsetmaincoords{75}{-35}
\pgfmathsetmacro{\shift}{0.15}
\centering
\begin{subfigure}[b]{0.3\textwidth}
\centering
\begin{tikzpicture}[scale=2.6,tdplot_main_coords]
\coordinate (x0) at (0,0,0);
\coordinate (x1) at (1,0,0);
\coordinate (x2) at (0,1,0);
\coordinate (x3) at (0,0,1);
\coordinate (x4) at (1,1,0);
\coordinate (x5) at (1,0,1);
\coordinate (x6) at (0,1,1);
\coordinate (x7) at (1,1,1);
\coordinate (T1shift) at ($\shift*(x0)+\shift*(x1)$);
\coordinate (T2shift) at ($\shift*(x5)+\shift*(x1)$);
\coordinate (T3shift) at ($\shift*(x5)+\shift*(x7)$);
\coordinate (T4shift) at ($\shift*(x6)+\shift*(x7)$);
\coordinate (T5shift) at ($\shift*(x6)+\shift*(x2)$);
\coordinate (T6shift) at ($\shift*(x0)+\shift*(x2)$);
\draw[fill=yellow, fill opacity=.8] ($(x7)+(T4shift)$)--($(x6)+(T4shift)$)--($(x3)+(T4shift)$)--cycle;
\draw[fill=yellow, fill opacity=.5] ($(x7)+(T4shift)$)--($(x4)+(T4shift)$)--($(x3)+(T4shift)$)--cycle;
\draw[fill=yellow, fill opacity=.5] ($(x4)+(T4shift)$)--($(x6)+(T4shift)$)--($(x3)+(T4shift)$)--cycle;
\draw[dashed] ($(x4) + (T3shift)$)--($(x7) + (T3shift)$);
\draw[fill=blue!50!green, fill opacity=.8] ($(x7)+(T3shift)$)--($(x5)+(T3shift)$)--($(x3)+(T3shift)$)--cycle;
\draw[fill=blue!50!green, fill opacity=.5] ($(x4)+(T3shift)$)--($(x5)+(T3shift)$)--($(x3)+(T3shift)$)--cycle;
\draw[dashed] ($(x4) + (T2shift) $)--($(x5) + (T2shift) $);
\draw[fill=blue, fill opacity=.8] ($(x1)+(T2shift)$)--($(x5)+(T2shift)$)--($(x3)+(T2shift)$)--cycle;
\draw[fill=blue, fill opacity=.5] ($(x1)+(T2shift)$)--($(x4)+(T2shift)$)--($(x3)+(T2shift)$)--cycle;
\draw[dashed] ($(x4) + (T5shift) $)--($(x6) + (T5shift) $);
\draw[fill=red!50!yellow, fill opacity=.8] ($(x2)+(T5shift)$)--($(x6)+(T5shift)$)--($(x3)+(T5shift)$)--cycle;
\draw[fill=red!50!yellow, fill opacity=.5] ($(x2)+(T5shift)$)--($(x4)+(T5shift)$)--($(x3)+(T5shift)$)--cycle;
\draw[dashed] ($(x4) + (T6shift) $)--($(x2) + (T6shift) $);
\draw[fill=red, fill opacity=.8] ($ (x2) + (T6shift) $)--($(x0) + (T6shift) $)--($(x3) + (T6shift) $)--cycle;
\draw[fill=red, fill opacity=.5] ($(x4)+(T6shift)$)--($(x0)+(T6shift)$)--($(x3)+(T6shift)$)--cycle;
\foreach \i in {x0,x1,x3}
    \draw[dashed] ($(x4) + (T1shift) $)--($(\i) + (T1shift) $);
\draw[fill=black!60!blue, fill opacity=.8] ($ (x0) + (T1shift) $)--($(x1) + (T1shift) $)--($(x3) + (T1shift) $)--cycle;
\end{tikzpicture}
\caption{}
\end{subfigure}
\hspace*{2mm}
\begin{subfigure}[b]{0.3\textwidth}
\centering
\begin{tikzpicture}[scale=2.4,tdplot_main_coords]
\pgfmathsetmacro{\smallShift}{0.2}
\coordinate (x0) at (0,0,0);
\coordinate (x1) at (1,0,0);
\coordinate (x2) at (0,1,0);
\coordinate (x3) at (0,0,1);
\coordinate (x4) at (1,1,0);
\coordinate (x5) at (1,0,1);
\coordinate (x6) at (0,1,1);
\coordinate (x7) at (1,1,1);
\coordinate (x8) at (0.5,0.5,0.5);
\coordinate (diag) at ($(x3) - (x4)$);
\coordinate (T1shift)  at ($\shift*(x0)+\shift*(x1)-\smallShift*(diag)$);
\coordinate (T2shift)  at ($\shift*(x0)+\shift*(x1)+\smallShift*(diag)$);
\coordinate (T3shift)  at ($\shift*(x1)+\shift*(x5)-\smallShift*(diag)$);
\coordinate (T4shift)  at ($\shift*(x1)+\shift*(x5)+\smallShift*(diag)$);
\coordinate (T5shift)  at ($\shift*(x7)+\shift*(x5)-\smallShift*(diag)$);
\coordinate (T6shift)  at ($\shift*(x7)+\shift*(x5)+\smallShift*(diag)$);
\coordinate (T7shift)  at ($\shift*(x6)+\shift*(x7)-\smallShift*(diag)$);
\coordinate (T8shift)  at ($\shift*(x6)+\shift*(x7)+\smallShift*(diag)$);
\coordinate (T9shift)  at ($\shift*(x6)+\shift*(x2)-\smallShift*(diag)$);
\coordinate (T10shift) at ($\shift*(x6)+\shift*(x2)+\smallShift*(diag)$);
\coordinate (T11shift) at ($\shift*(x0)+\shift*(x2)-\smallShift*(diag)$);
\coordinate (T12shift) at ($\shift*(x0)+\shift*(x2)+\smallShift*(diag)$);
\draw[fill=blue, fill opacity=.5] ($(x7)+(T7shift)$)--($(x6)+(T7shift)$)--($(x8)+(T7shift)$)--cycle;
\draw[fill=blue, fill opacity=.5] ($(x7)+(T7shift)$)--($(x4)+(T7shift)$)--($(x8)+(T7shift)$)--cycle;
\draw[fill=blue, fill opacity=.5] ($(x4)+(T7shift)$)--($(x6)+(T7shift)$)--($(x8)+(T7shift)$)--cycle;
\draw[dashed] ($(x4) + (T9shift) $)--($(x6) + (T9shift) $);
\draw[fill=pink, fill opacity=.5] ($(x2)+(T9shift)$)--($(x6)+(T9shift)$)--($(x8)+(T9shift)$)--cycle;
\draw[fill=pink, fill opacity=.5] ($(x2)+(T9shift)$)--($(x4)+(T9shift)$)--($(x8)+(T9shift)$)--cycle;
\draw[fill=yellow, fill opacity=.8] ($(x7)+(T8shift)$)--($(x6)+(T8shift)$)--($(x3)+(T8shift)$)--cycle;
\draw[fill=yellow, fill opacity=.5] ($(x7)+(T8shift)$)--($(x8)+(T8shift)$)--($(x3)+(T8shift)$)--cycle;
\draw[fill=yellow, fill opacity=.5] ($(x8)+(T8shift)$)--($(x6)+(T8shift)$)--($(x3)+(T8shift)$)--cycle;
\draw[dashed] ($(x4) + (T5shift)$)--($(x7) + (T5shift)$);
\draw[fill=magenta, fill opacity=.5] ($(x7)+(T5shift)$)--($(x5)+(T5shift)$)--($(x8)+(T5shift)$)--cycle;
\draw[fill=magenta, fill opacity=.5] ($(x4)+(T5shift)$)--($(x5)+(T5shift)$)--($(x8)+(T5shift)$)--cycle;
\draw[dashed] ($(x8) + (T6shift)$)--($(x7) + (T6shift)$);
\draw[fill=blue!50!green, fill opacity=.8] ($(x7)+(T6shift)$)--($(x5)+(T6shift)$)--($(x3)+(T6shift)$)--cycle;
\draw[fill=blue!50!green, fill opacity=.5] ($(x8)+(T6shift)$)--($(x5)+(T6shift)$)--($(x3)+(T6shift)$)--cycle;
\draw[dashed] ($(x8) + (T10shift) $)--($(x6) + (T10shift) $);
\draw[fill=orange, fill opacity=.8] ($(x2)+(T10shift)$)--($(x6)+(T10shift)$)--($(x3)+(T10shift)$)--cycle;
\draw[fill=orange, fill opacity=.5] ($(x2)+(T10shift)$)--($(x8)+(T10shift)$)--($(x3)+(T10shift)$)--cycle;
\draw[dashed] ($(x4) + (T3shift) $)--($(x5) + (T3shift) $);
\draw[fill=red!50!green, fill opacity=.5] ($(x1)+(T3shift)$)--($(x5)+(T3shift)$)--($(x8)+(T3shift)$)--cycle;
\draw[fill=red!50!green, fill opacity=.5] ($(x1)+(T3shift)$)--($(x4)+(T3shift)$)--($(x8)+(T3shift)$)--cycle;
\draw[dashed] ($(x4) + (T11shift) $)--($(x2) + (T11shift) $);
\draw[fill=green, fill opacity=.5] ($ (x2) + (T11shift) $)--($(x0) + (T11shift) $)--($(x8) + (T11shift) $)--cycle;
\draw[fill=green, fill opacity=.5] ($(x4)+(T11shift)$)--($(x0)+(T11shift)$)--($(x8)+(T11shift)$)--cycle;
\foreach \i in {x0,x1,x8}
    \draw[dashed] ($(x4) + (T1shift) $)--($(\i) + (T1shift) $);
\draw[fill=yellow!50!orange, fill opacity=.5] ($ (x0) + (T1shift) $)--($(x1) + (T1shift) $)--($(x8) + (T1shift) $)--cycle;
\draw[dashed] ($(x8) + (T12shift) $)--($(x2) + (T12shift) $);
\draw[fill=red, fill opacity=.8] ($ (x2) + (T12shift) $)--($(x0) + (T12shift) $)--($(x3) + (T12shift) $)--cycle;
\draw[fill=red, fill opacity=.5] ($(x8)+(T12shift)$)--($(x0)+(T12shift)$)--($(x3)+(T12shift)$)--cycle;
\draw[dashed] ($(x8) + (T4shift) $)--($(x5) + (T4shift) $);
\draw[fill=blue, fill opacity=.8] ($(x1)+(T4shift)$)--($(x5)+(T4shift)$)--($(x3)+(T4shift)$)--cycle;
\draw[fill=blue, fill opacity=.5] ($(x1)+(T4shift)$)--($(x8)+(T4shift)$)--($(x3)+(T4shift)$)--cycle;
\foreach \i in {x0,x1,x3}
    \draw[dashed] ($(x8) + (T2shift) $)--($(\i) + (T2shift) $);
\draw[fill=black!50!blue, fill opacity=.8] ($ (x0) + (T2shift) $)--($(x1) + (T2shift) $)--($(x3) + (T2shift) $)--cycle;
\end{tikzpicture}
\caption{}
\end{subfigure}
\hspace*{2mm}
\begin{subfigure}[b]{0.3\textwidth}
\centering
\begin{tikzpicture}[spy using outlines={circle,red, magnification=3,size=1.75cm, connect spies}]
\node {\pgfimage[width=0.95\textwidth]{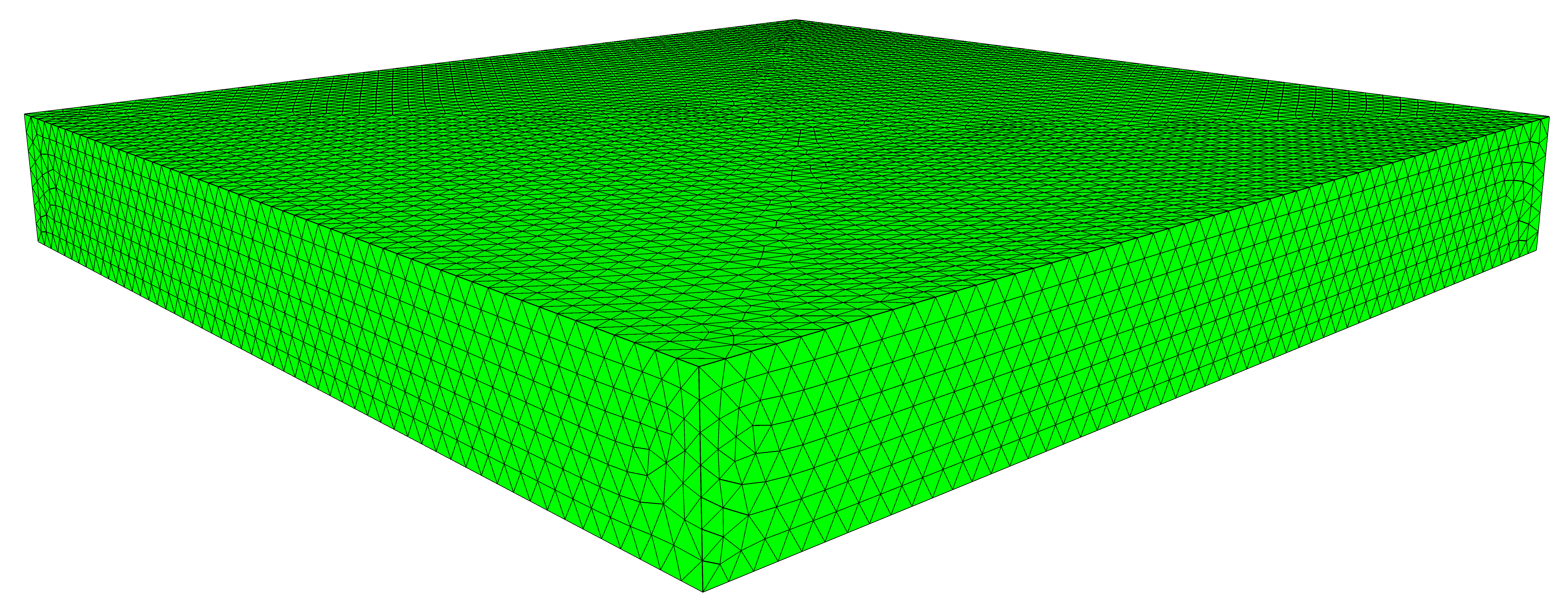}};
\spy[every spy on node/.append style={thick}, size=2.7cm] on (-0.25,-0.45) in node [left] at (2.3,-2.2);
\end{tikzpicture}
\caption{}
\end{subfigure}
\caption{Experiment of Section~\ref{sec:numerics0}.
Mesh types:
(a) Type~I;
(b) Type~II;
(c) Type~III.}
\label{fig:meshes}
\end{figure}
For the spatial discretization, we consider three types of tetrahedral meshes; see Figure~\ref{fig:meshes}.
For meshes of type~I, the domain is first uniformly decomposed into cubes.
Then, each cube is split into six tetrahedra in such a way that any tetrahedron has three mutually perpendicular edges.
Any mesh of this type satisfies the angle condition~\eqref{eq:angleCondition}; see \cite[Lemma~3.5]{bartels2005}.
Meshes of type~II are obtained from the previous one bisecting the longest edge of each of the six tetrahedra
i.e., the main diagonal of the original cube.
As a result the cube is uniformly split into twelve tetrahedra.
Meshes of this type do not satisfy~\eqref{eq:angleCondition}.
For type~III, we consider unstructured meshes obtained with Netgen,
which are generated with the advancing front method (see~\cite{schoeberl1997} for details)
and in general do not satisfy~\eqref{eq:angleCondition}.
\par
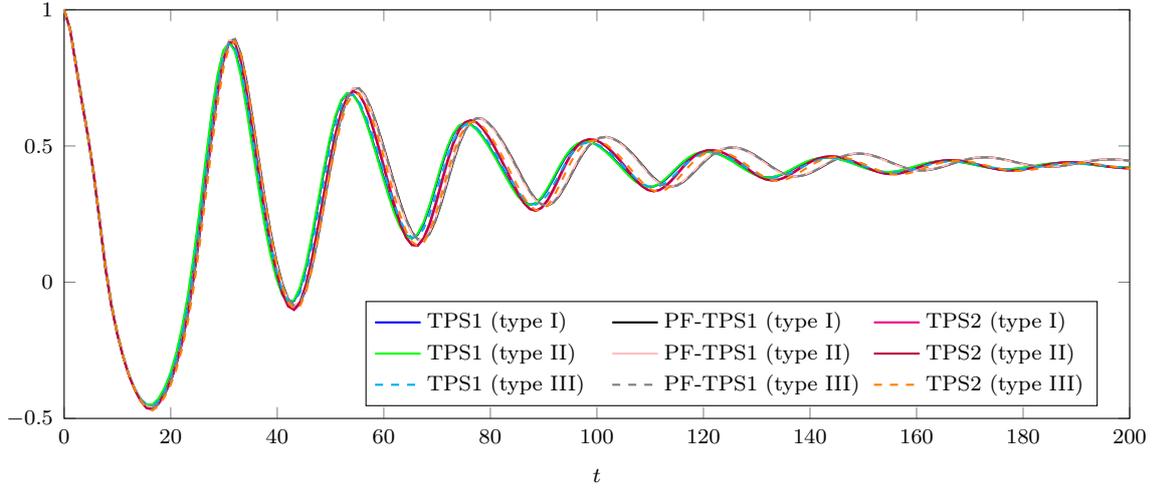
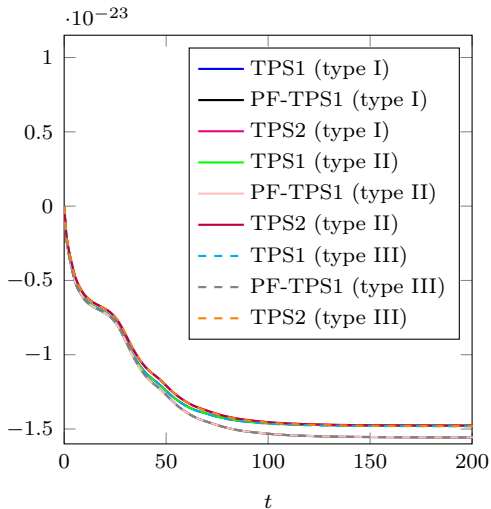
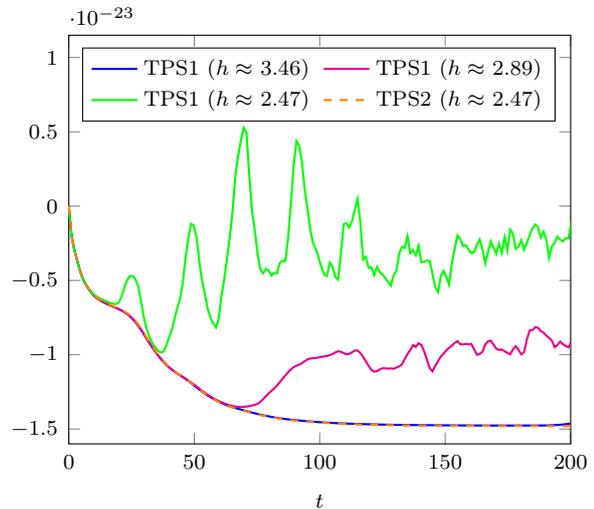
\begin{figure}[ht]
\captionsetup[subfigure]{labelfont=rm}
\centering
\begin{subfigure}[b]{\textwidth}
\begin{tikzpicture}
\pgfplotstableread{pics/ex0/plot1/paper_struct_TPS1_averages.dat}{\structTPSone}
\pgfplotstableread{pics/ex0/plot1/paper_struct_TPS2_averages.dat}{\structTPStwo}
\pgfplotstableread{pics/ex0/plot1/paper_obtuse_TPS1_averages.dat}{\obtuseTPSone}
\pgfplotstableread{pics/ex0/plot1/paper_obtuse_TPS2_averages.dat}{\obtuseTPStwo}
\pgfplotstableread{pics/ex0/plot1/paper_unstruct_TPS1_averages.dat}{\unstructTPSone}
\pgfplotstableread{pics/ex0/plot1/paper_unstruct_TPS2_averages.dat}{\unstructTPStwo}
\pgfplotstableread{pics/ex0/plot1/paper_struct_PFTPS1_averages.dat}{\structPFTPSone}
\pgfplotstableread{pics/ex0/plot1/paper_obtuse_PFTPS1_averages.dat}{\obtusePFTPSone}
\pgfplotstableread{pics/ex0/plot1/paper_struct_PFTPS1_averages.dat}{\unstructPFTPSone} 
\begin{axis}[
width = 0.98\textwidth,
height = 70mm,
xlabel={\tiny $t$},
xmin=0,
xmax=200,
ymin=-0.5,
ymax=1,
legend style={
legend pos=south east,
legend cell align = left,
legend columns = 3,
/tikz/column 2/.style={column sep=5pt},
font = \tiny,
}
]
\addplot[blue,thick] table[x=t, y=mz]{\structTPSone};
\addplot[black,thick] table[x=t, y=mz]{\structPFTPSone};
\addplot[magenta,thick] table[x=t, y=mz]{\structTPStwo};
\addplot[green,thick] table[x=t, y=mz]{\obtuseTPSone};
\addplot[pink,thick] table[x=t, y=mz]{\obtusePFTPSone};
\addplot[purple,thick] table[x=t, y=mz]{\obtuseTPStwo};
\addplot[cyan,thick,dashed] table[x=t, y=mz]{\unstructTPSone};
\addplot[gray,thick,dashed] table[x=t, y=mz]{\unstructPFTPSone};
\addplot[orange,thick,dashed] table[x=t, y=mz]{\unstructTPStwo};
\legend{
\tiny
TPS1 (type~I) \ ,
PF-TPS1 (type~I) \ ,
TPS2 (type~I),
TPS1 (type~II) \ ,
PF-TPS1 (type~II) \ ,
TPS2 (type~II),
TPS1 (type~III) \ ,
PF-TPS1 (type~III) \ ,
TPS2 (type~III)
}
\end{axis}
\end{tikzpicture}
\caption{}
\label{fig:ex0:averages}
\end{subfigure}
\\
\vspace*{2mm}
\begin{subfigure}[b]{0.45\textwidth}
\begin{tikzpicture}
\pgfplotstableread{pics/ex0/plot2/paper_struct_TPS1_energies.dat}{\structTPSone}
\pgfplotstableread{pics/ex0/plot2/paper_struct_TPS2_energies.dat}{\structTPStwo}
\pgfplotstableread{pics/ex0/plot2/paper_obtuse_TPS1_energies.dat}{\obtuseTPSone}
\pgfplotstableread{pics/ex0/plot2/paper_obtuse_TPS2_energies.dat}{\obtuseTPStwo}
\pgfplotstableread{pics/ex0/plot2/paper_unstruct_TPS1_energies.dat}{\unstructTPSone}
\pgfplotstableread{pics/ex0/plot2/paper_unstruct_TPS2_energies.dat}{\unstructTPStwo}
\pgfplotstableread{pics/ex0/plot2/paper_obtuse_PFTPS1_energies.dat}{\unstructPFTPSone}
\pgfplotstableread{pics/ex0/plot2/paper_struct_PFTPS1_energies.dat}{\structPFTPSone}
\pgfplotstableread{pics/ex0/plot2/paper_obtuse_PFTPS1_energies.dat}{\obtusePFTPSone}
\pgfplotstableread{pics/ex0/plot2/paper_struct_PFTPS1_energies.dat}{\unstructPFTPSone} 
\begin{axis}[
width = 0.97\textwidth,
height=70mm,
xlabel={\tiny $t$},
xmin=0,
xmax=200,
ymin=-1.6e-23,
ymax=1.15e-23,
legend style={legend pos=north east, legend cell align = left, font = \tiny}
]
\addplot[blue,thick] table[x=t, y=energy]{\structTPSone};
\addplot[black,thick] table[x=t, y=energy]{\structPFTPSone};
\addplot[magenta,thick] table[x=t, y=energy]{\structTPStwo};
\addplot[green,thick] table[x=t, y=energy]{\obtuseTPSone};
\addplot[pink,thick] table[x=t, y=energy]{\obtusePFTPSone};
\addplot[purple,thick] table[x=t, y=energy]{\obtuseTPStwo};
\addplot[cyan,thick,dashed] table[x=t, y=energy]{\unstructTPSone};
\addplot[gray,thick,dashed] table[x=t, y=energy]{\unstructPFTPSone};
\addplot[orange,thick,dashed] table[x=t, y=energy]{\unstructTPStwo};
\legend{
TPS1~(type~I),
PF-TPS1~(type~I),
TPS2~(type~I),
TPS1~(type~II),
PF-TPS1~(type~II),
TPS2~(type~II),
TPS1~(type~III),
PF-TPS1~(type~III),
TPS2~(type~III)
}
\end{axis}
\end{tikzpicture}
\caption{}
\label{fig:ex0:energies1}
\end{subfigure}
\hfill
\begin{subfigure}[b]{0.53\textwidth}
\begin{tikzpicture}
\pgfplotstableread{pics/ex0/plot3/paper_TPS1_theta05_coarse.dat}{\TPSoneThetaHalfCoarse}
\pgfplotstableread{pics/ex0/plot3/paper_TPS1_theta05_medium.dat}{\TPSoneThetaHalfMedium}
\pgfplotstableread{pics/ex0/plot3/paper_TPS1_theta05_fine.dat}{\TPSoneThetaHalfFine}
\pgfplotstableread{pics/ex0/plot3/paper_TPS2_fine.dat}{\TPStwoFine}
\begin{axis}[
width = 0.97\textwidth,
height=70mm,
xlabel={\tiny $t$},
xmin=0,
xmax=200,
ymin=-1.6e-23,
ymax=1.15e-23,
legend style={legend pos=north east, legend cell align = left, font = \tiny, legend columns = 2}
]
\addplot[blue,thick] table[x=t, y=energy]{\TPSoneThetaHalfCoarse};
\addplot[magenta,thick] table[x=t, y=energy]{\TPSoneThetaHalfMedium};
\addplot[green,thick] table[x=t, y=energy]{\TPSoneThetaHalfFine};
\addplot[orange,thick,dashed] table[x=t, y=energy]{\TPStwoFine};
\legend{
TPS1 ($h \approx$ \num{3.46}) \ ,
TPS1 ($h \approx$ \num{2.89}),
TPS1 ($h \approx$ \num{2.47}) \ ,
TPS2 ($h \approx$ \num{2.47})
}
\end{axis}
\end{tikzpicture}
\caption{}
\label{fig:ex0:energies2}
\end{subfigure}
\caption{Experiment of Section~\ref{sec:numerics0}.
Time evolutions of $\langle m_3 \rangle$ and $\E(\mm)$:
(a) Time evolution of $\langle m_3 \rangle$ for all schemes, all mesh types, and a fixed mesh size of $h \approx$ \SI{3.46}{\nano\meter};
(b) Time evolution of $\E(\mm)$ for all schemes, all mesh types; and a fixed mesh size of $h \approx$ \SI{3.46}{\nano\meter};
(c) Time evolution of $\E(\mm)$ for TPS1 ($\theta=1/2$) and TPS2, a mesh of type~I, and different mesh sizes.
}
\label{fig:ex0:energies+averages}
\end{figure}
In Figure~\ref{fig:ex0:energies+averages},
we plot the time evolutions
of the third component of the spatially averaged magnetization of the sample,
i.e., $\langle m_3(t) \rangle = \abs{\Omega}^{-1} \int_{\Omega} m_3(\xx,t)\, \dx$,
and the energy~\eqref{eq:llg:energy}
obtained with the three algorithms for different mesh types and sizes,
and a constant time-step size $k = \gamma_0 \num{e-7} \approx$ \num{0.0221}.
In Figure~\ref{fig:ex0:averages}--\ref{fig:ex0:energies1}, we compare the results
obtained with TPS1 ($\theta=1$), PF-TPS1 ($\theta=1$), and TPS2 for the mesh types I--III.
For each mesh type, we consider a mesh size of $h \approx$ \SI{3.46}{\nano\meter}.
Note that the meshes of types~II--III violate~\eqref{eq:angleCondition}.
In Figure~\ref{fig:ex0:energies2}, we compare the results obtained with TPS1 ($\theta=1/2$) and TPS2.
We consider a structured mesh of type~I and compare the results obtained for different
mesh sizes.

Although the convergence result of Theorem~\ref{thm:main} does not cover meshes
of types~II--III for TPS1 and TPS2, the numerical results show that, in terms of
stability, the methods behave identically, independently of the mesh type used.
To better understand this aspect, we also monitored \textsl{a~posteriori} the validity of the inequality
\begin{equation*}
\norm[\LL^2(\Omega)]{\Grad\mm_h^{i+1}}
\leq
\norm[\LL^2(\Omega)]{\Grad(\mm_h^i + k \vv_h^i)},
\end{equation*}
which is the inequality effectively used in the stability analysis of TPS1 and TPS2;
see Proposition~\ref{prop:tps1:energy} and Proposition~\ref{prop:tps2:energy} below,
respectively.
It turned out that the inequality is always satisfied, even for meshes
violating~\eqref{eq:angleCondition}.

The omission of the nodal projection in PF-TPS1 manifests itself as a phase error 
in the evolution of $\langle m_3 \rangle$ accumulating over time
(see Figure~\ref{fig:ex0:averages}), and as a lower energy level of the final
magnetization configuration (see Figure~\ref{fig:ex0:energies1}).
However, the overall qualitative outcome of the experiment is preserved.

The results also show that TPS1 with $\theta=1$ is more dissipative than TPS2.
However, the choice of $\theta=1/2$, which would be favorable from an energetic point of view
(no artificial damping), is not feasible, because it affects the stability of the scheme;
see Figure~\ref{fig:ex0:energies1}.
The instability is more severe for smaller mesh sizes, giving numerical evidence
of the CFL-condition required for stability in this case; see Remark~\ref{rem:mainThm}(ii).
\par
\begin{figure}[t]
\centering
\begin{tikzpicture}
\pgfplotstableread{pics/ex0/errL1.dat}{\error}
\begin{loglogaxis}[
height = 60mm,
xlabel={\tiny $k$},
legend style={legend pos=north east, legend cell align = left, font = \tiny},
x dir=reverse
]
\addplot[blue,dashed,thick] table[x=k, y expr={(\thisrow{k})*10^(-22)}]{\error};
\addplot[magenta,thick,only marks] table[x=k, y=normL1]{\error};
\legend{
$\mathcal{O}(k)$,
$L^1$-error}
\end{loglogaxis}
\end{tikzpicture}
\caption{
Experiment of Section~\ref{sec:numerics0}.
Empirical convergence rate as $k \to 0$ for the error $\big\Vert\interp\big[\abs{\mmhk^+(T)}^2\big]-1\big\Vert_{L^1(\Omega)}$
for PF-TPS1 ($\theta=1$).
}
\label{fig:ex0:pf-error}
\end{figure}
Finally, in Figure~\ref{fig:ex0:pf-error}, we study the violation of the unit-length constraint which occurs for PF-TPS1.
We consider a structured mesh of type~I with mesh size $h \approx$ \SI{4.33}{\nano\meter}
and plot the error $\big\Vert\interp\big[\abs{\mmhk^+(T)}^2\big]-1\big\Vert_{L^1(\Omega)}$
for different time-step sizes $k$.
Note that this error is identically zero for TPS1 and TPS2, because of the nodal projection.
We observe a linear dependence of the error on $k$ which is in total agreement with the theory,
see estimate~\eqref{eq:ConstraintLinearDecay}
in the proof of Proposition~\ref{prop:convergenceSubsequences} below.

For numerical experiments testing the experimental convergence rates in time of the schemes (in the absence of DMI),
we refer to~\cite[Section~6.2.1]{ruggeri2016} (for TPS1 and PF-TPS1)
and~\cite[Section~7.1]{dpprs2017} (for TPS1 and TPS2).
There, the observed rates with respect to a reference solution match the formal consistency error
of the schemes, i.e., first-order convergence for TPS1 and PF-TPS1, second-order convergence for TPS2.
A similar numerical study for the present model problem (which includes DMI) confirms
the first-order convergence for TPS1 and PF-TPS1,
but does not reveal a full second-order convergence for TPS2.
We believe that this is due to a lack of regularity of the solution in time.
\subsection{Stability of isolated skyrmions in nanodisks} \label{sec:numerics1}
We reproduce a numerical experiment from~\cite{scrtf2013}.
We investigate the relaxed states of a thin nanodisk of diameter \SI{80}{\nano\meter} (aligned with $x_1 x_2$-plane) and thickness \SI{0.4}{\nano\meter} ($x_3$-direction) centered at $(0,0,0)$ for different values of the DMI constant and initial conditions.
The effective field in~\eqref{eq:llg:physical} consists of exchange interaction, perpendicular uniaxial anisotropy, interfacial DMI, and stray field, i.e.,
\begin{equation*}
\Heff(\mm)
= \frac{2A}{\mu_0 \Ms} \Lapl\mm
+ \frac{2 K}{\mu_0 \Ms} (\aa\cdot\mm)\aa
- \frac{2D}{\mu_0 \Ms}
\begin{pmatrix}
- \de_1 m_3 \\
- \de_2 m_3 \\
\de_1 m_1 + \de_2 m_2
\end{pmatrix}
+ \Hstray(\mm).
\end{equation*}
The values of the involved material parameters mimic those of cobalt:
$\Ms=$ \SI{5.8e5}{\ampere\per\meter}, $\alpha=$ \num{0.3}, $A=$ \SI{1.5e-11}{\joule\per\meter}, $K=$ \SI{8e5}{\joule\per\meter\cubed}, and $\aa=(0,0,1)$.
For the DMI constant, we consider the values  $D=$ \num{0}, \num{1}, \dots, \num{8} \si{\milli\joule\per\square\meter}.
We test two different initial magnetization configurations:
\begin{itemize}
\item[\rm(i)] a uniform out-of-plane ferromagnetic state, i.e., $\mm^0 \equiv (0,0,1)$,
\item[\rm(ii)] a skyrmion-like state, i.e., given $r = \sqrt{x_1^2 + x_2^2}$, we define $\mm^0(\xx)=(0,0,-1)$ if $r \in [0,15]$ \si{\nano\meter} and $\mm^0(\xx)=(0,0,1)$ if $r \in (15,40]$ \si{\nano\meter}.
\end{itemize}
For all simulations, we choose $T=$ \SI{2}{\nano\second} for $D=$ \num{0}, \dots, \num{6} \si{\milli\joule\per\square\meter} and $T=$ \SI{5}{\nano\second} for $D=$ \num{7}, \num{8} \si{\milli\joule\per\square\meter}, which experimentally turn out to be sufficiently large times to relax the system.
The computational domain is discretized by a regular partition consisting of \num{32575} tetrahedra (mesh size of \SI{1}{\nano\meter}).
For the time discretization, we consider a uniform partition of the time interval $(0,T)$ with a time-step size of \SI{0.1}{\pico\second}.
\begin{figure}[t]
\captionsetup[subfigure]{labelformat=empty}
\centering
\begin{subfigure}[b]{0.095\textwidth}
\includegraphics[width=\textwidth]{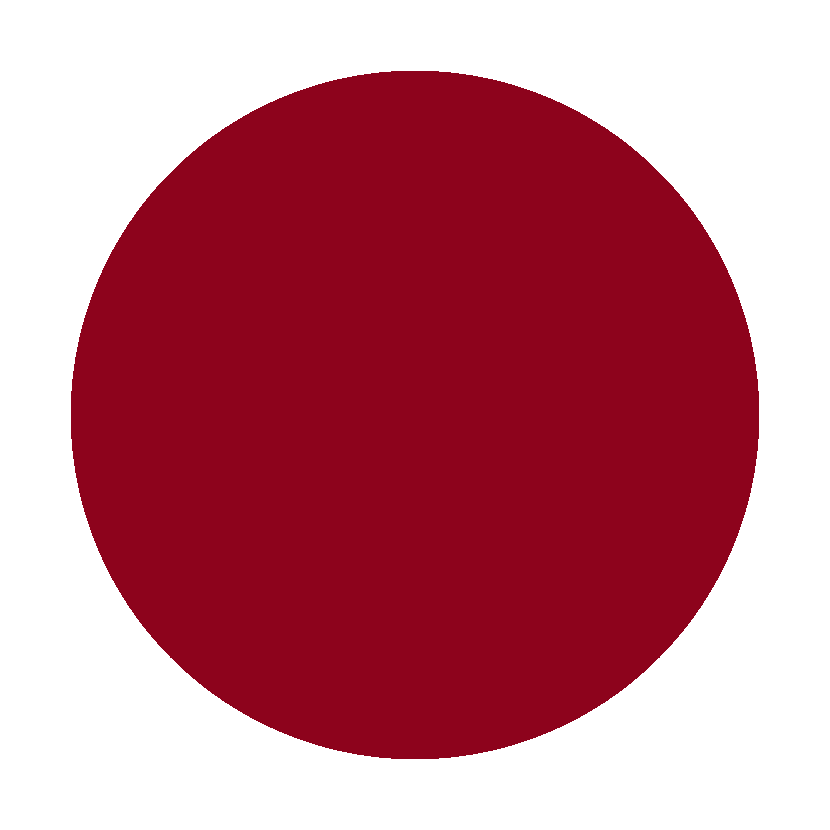}
\caption{$D=0$}
\end{subfigure}
\begin{subfigure}[b]{0.095\textwidth}
\includegraphics[width=\textwidth]{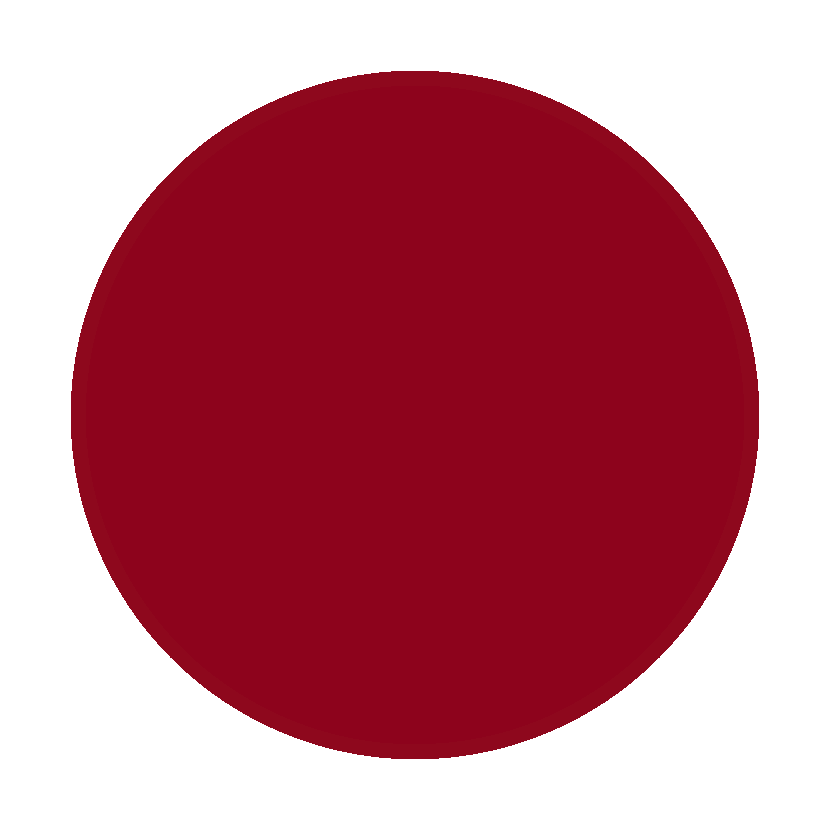}
\caption{$D=1$}
\end{subfigure}
\begin{subfigure}[b]{0.095\textwidth}
\includegraphics[width=\textwidth]{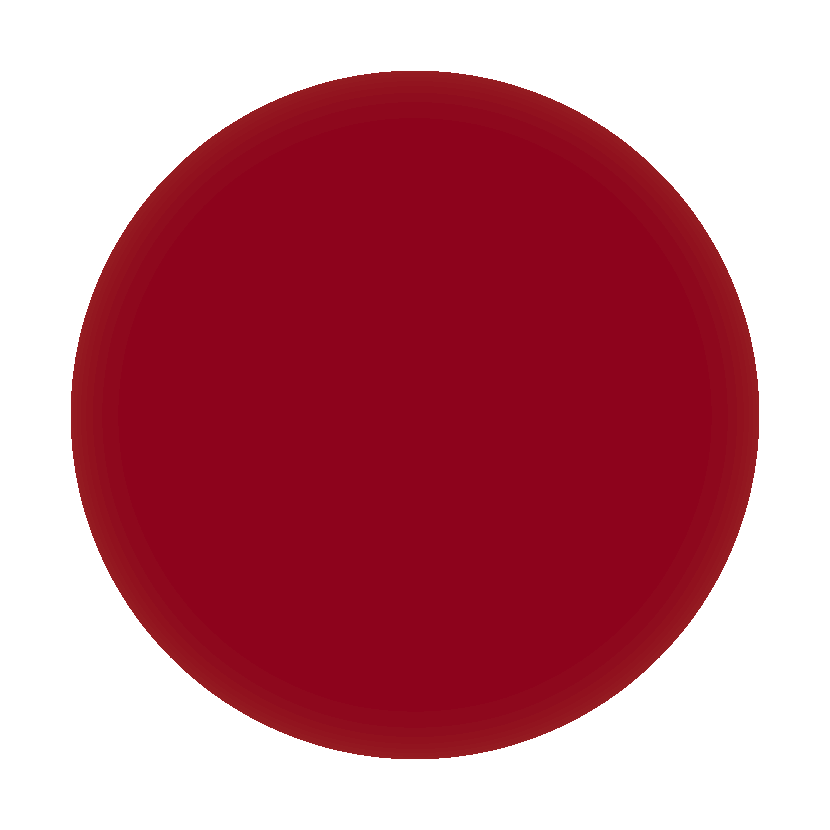}
\caption{$D=2$}
\end{subfigure}
\begin{subfigure}[b]{0.095\textwidth}
\includegraphics[width=\textwidth]{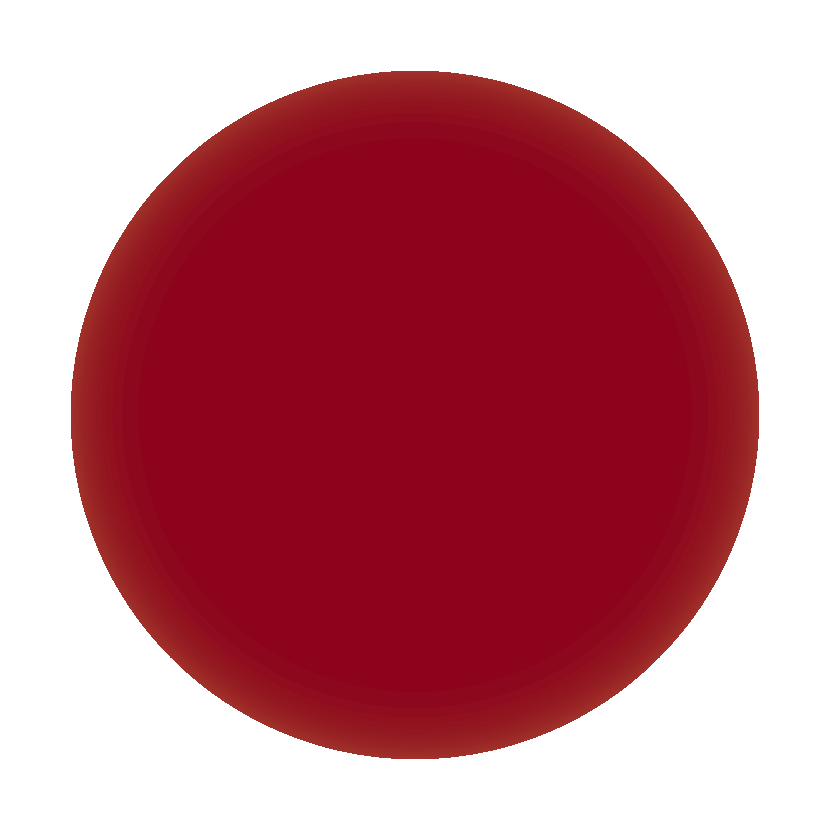}
\caption{$D=3$}
\end{subfigure}
\begin{subfigure}[b]{0.095\textwidth}
\includegraphics[width=\textwidth]{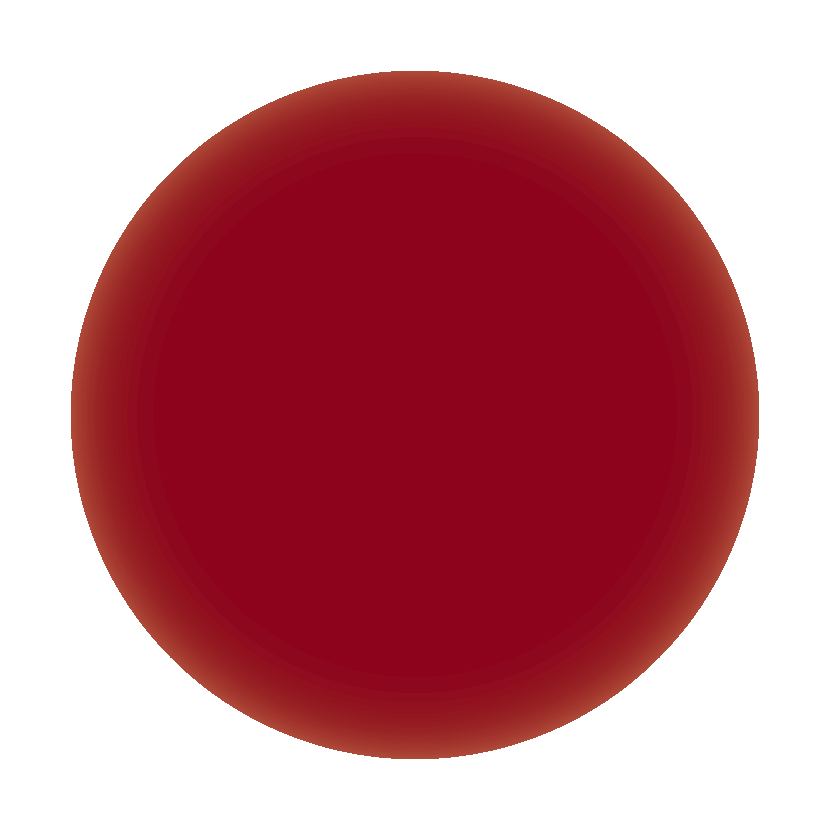}
\caption{$D=4$}
\end{subfigure}
\begin{subfigure}[b]{0.095\textwidth}
\includegraphics[width=\textwidth]{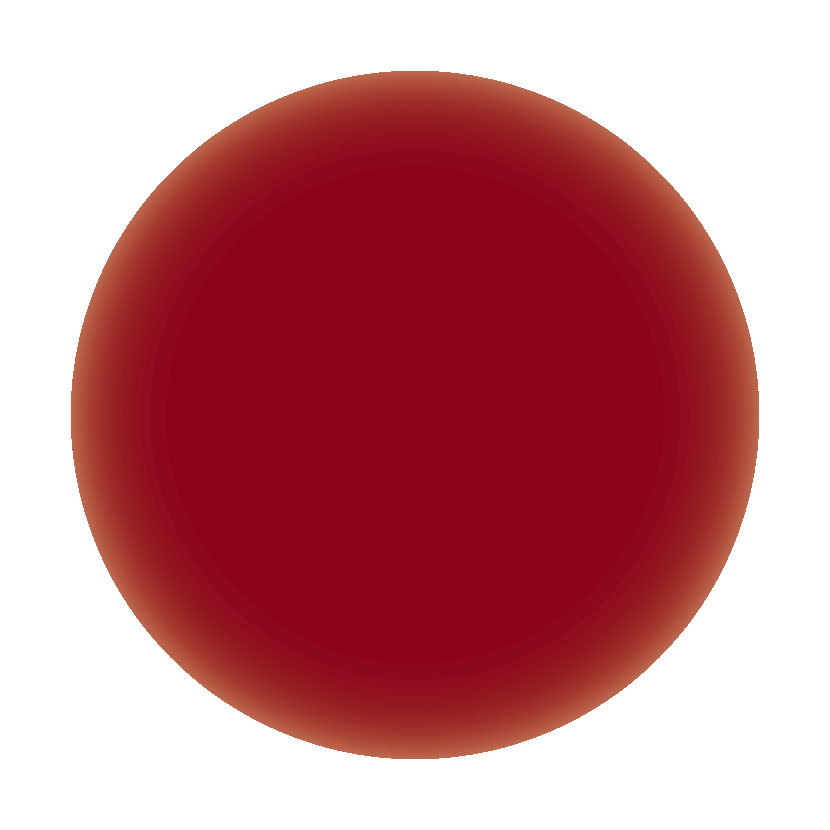}
\caption{$D=5$}
\end{subfigure}
\begin{subfigure}[b]{0.095\textwidth}
\includegraphics[width=\textwidth]{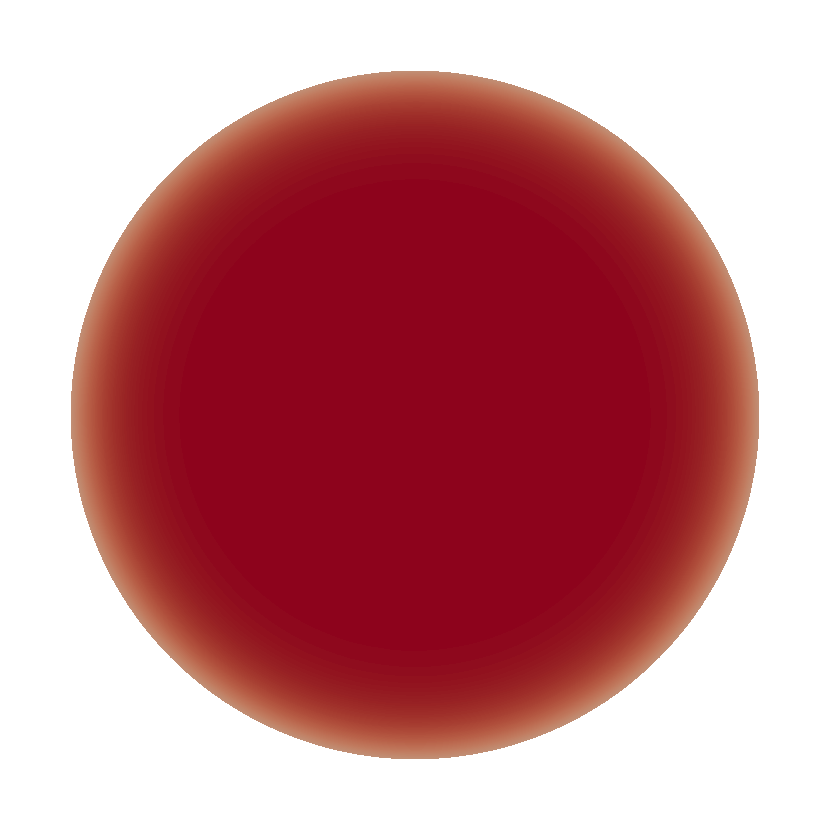}
\caption{$D=6$}
\end{subfigure}
\begin{subfigure}[b]{0.095\textwidth}
\includegraphics[width=\textwidth]{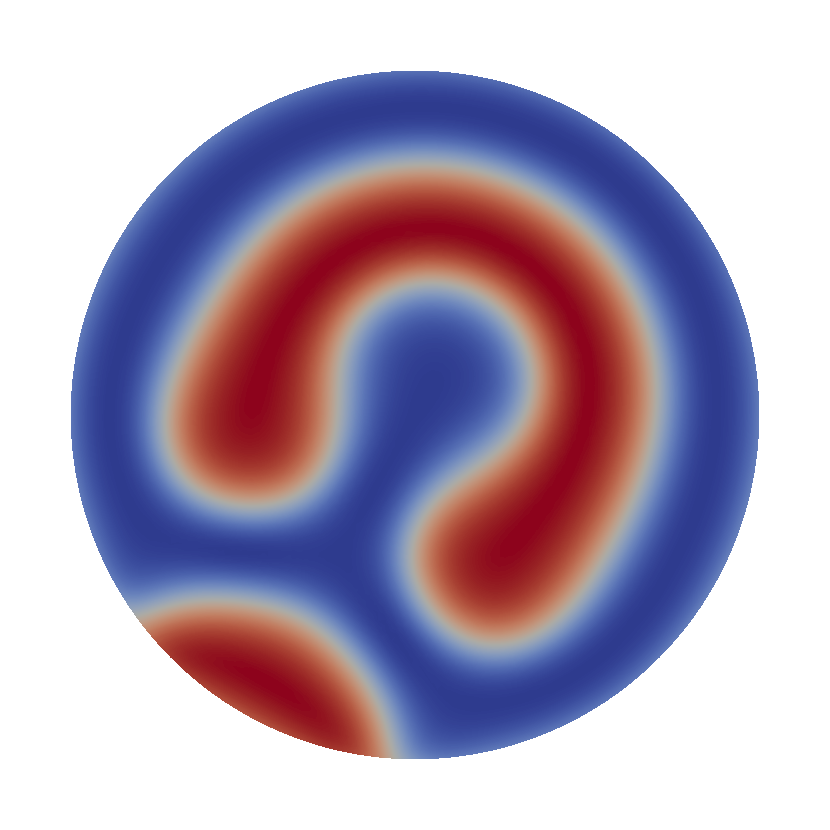}
\caption{$D=7$}
\end{subfigure}
\begin{subfigure}[b]{0.095\textwidth}
\includegraphics[width=\textwidth]{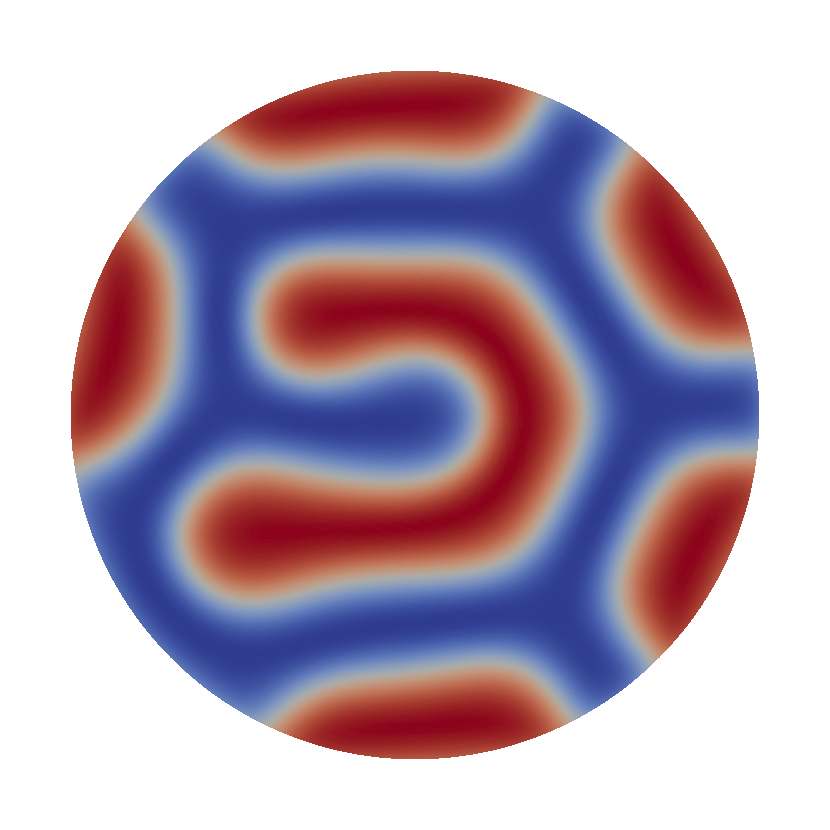}
\caption{$D=8$}
\end{subfigure}
\begin{subfigure}[b]{0.04\textwidth}
\includegraphics[width=\textwidth]{pics/legend.png}
\caption{}
\end{subfigure}
\caption{Experiment of Section~\ref{sec:numerics1}.
Magnetization $m_3$ of the relaxed state for the uniform out-of-plane initial condition~{\rm(i)} and different values of the DMI constant (in \si{\milli\joule\per\square\meter}).
The pictures refer to the states computed with TPS1 ($\theta=1$).}
\label{fig:relaxed:fm}
\end{figure}
\begin{figure}[t]
\captionsetup[subfigure]{labelformat=empty}
\centering
\begin{subfigure}[b]{0.095\textwidth}
\includegraphics[width=\textwidth]{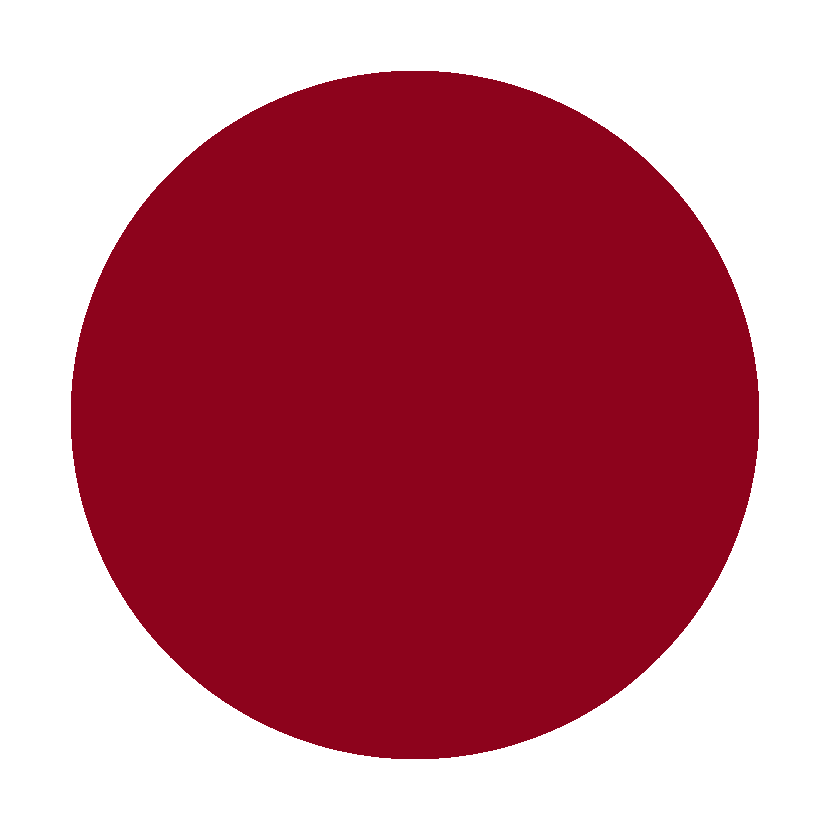}
\caption{$D=0$}
\end{subfigure}
\begin{subfigure}[b]{0.095\textwidth}
\includegraphics[width=\textwidth]{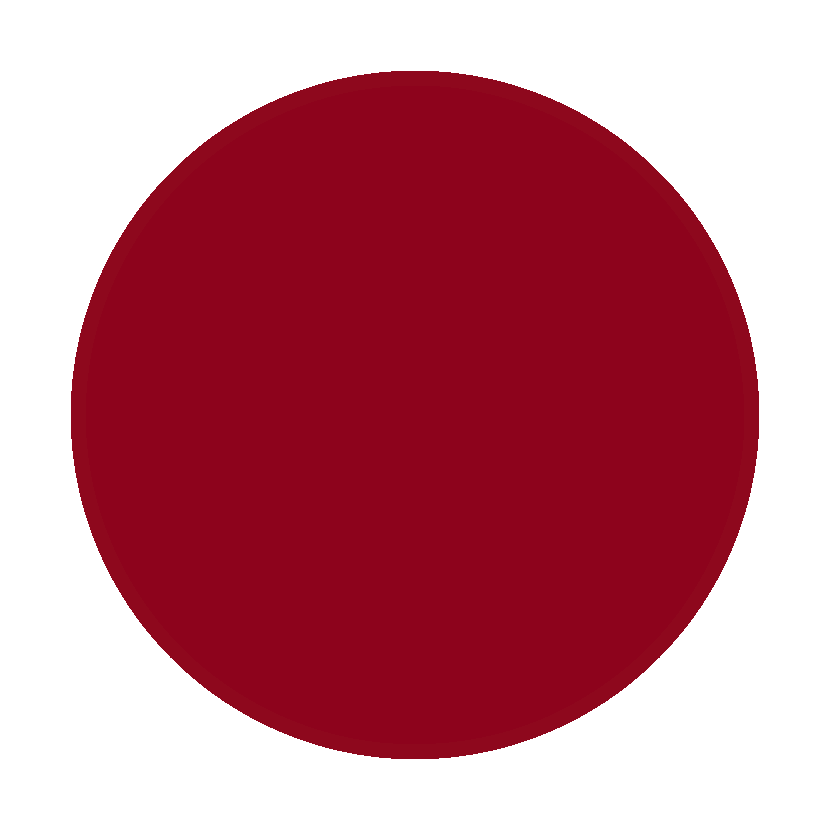}
\caption{$D=1$}
\end{subfigure}
\begin{subfigure}[b]{0.095\textwidth}
\includegraphics[width=\textwidth]{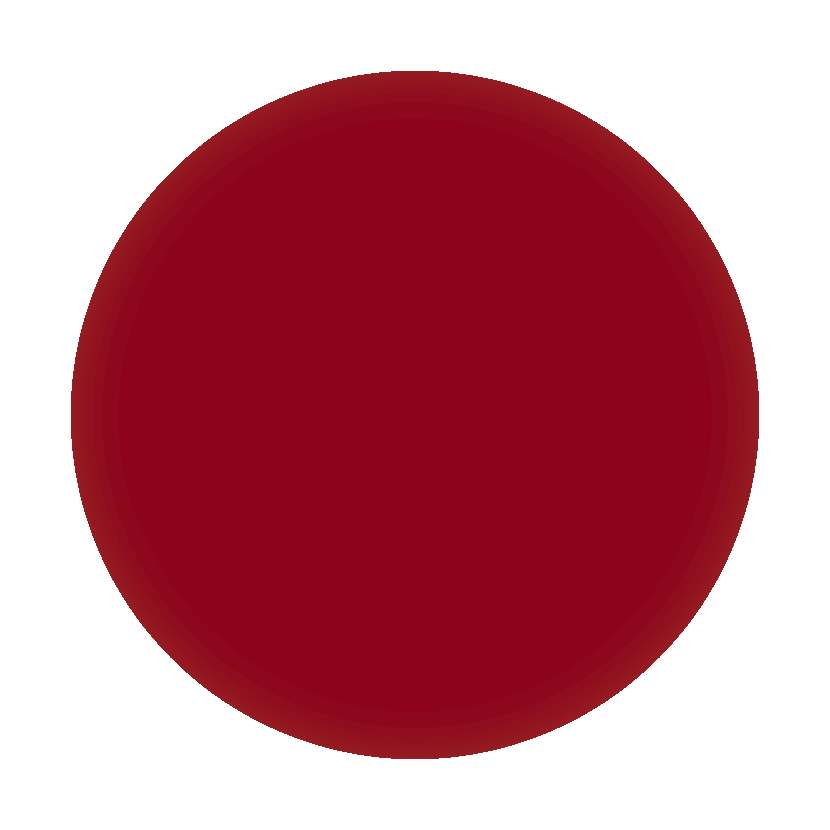}
\caption{$D=2$}
\end{subfigure}
\begin{subfigure}[b]{0.095\textwidth}
\includegraphics[width=\textwidth]{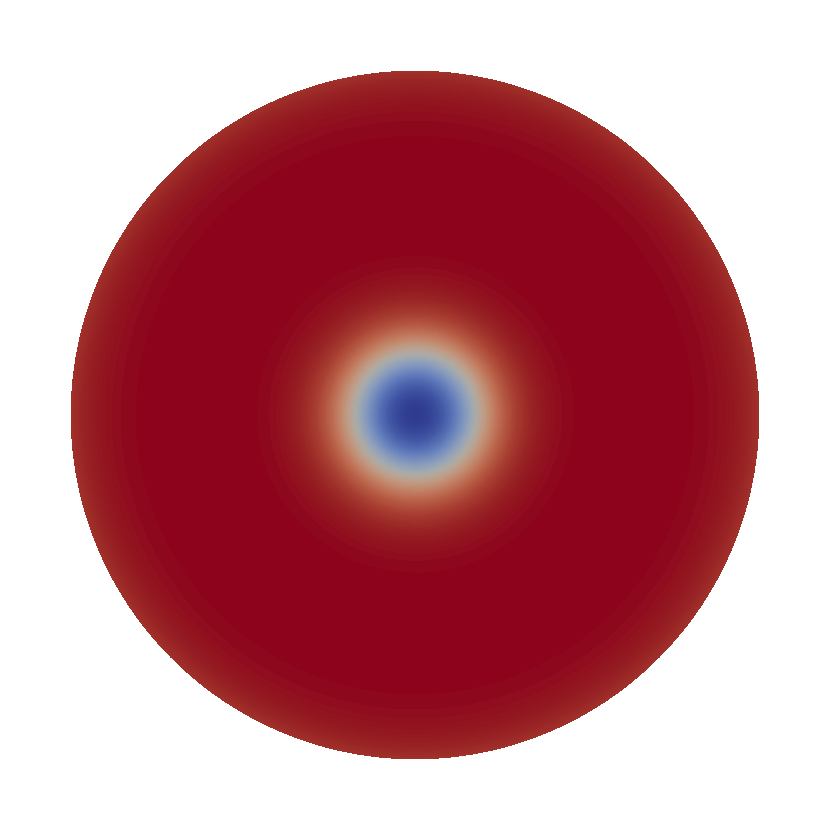}
\caption{$D=3$}
\end{subfigure}
\begin{subfigure}[b]{0.095\textwidth}
\includegraphics[width=\textwidth]{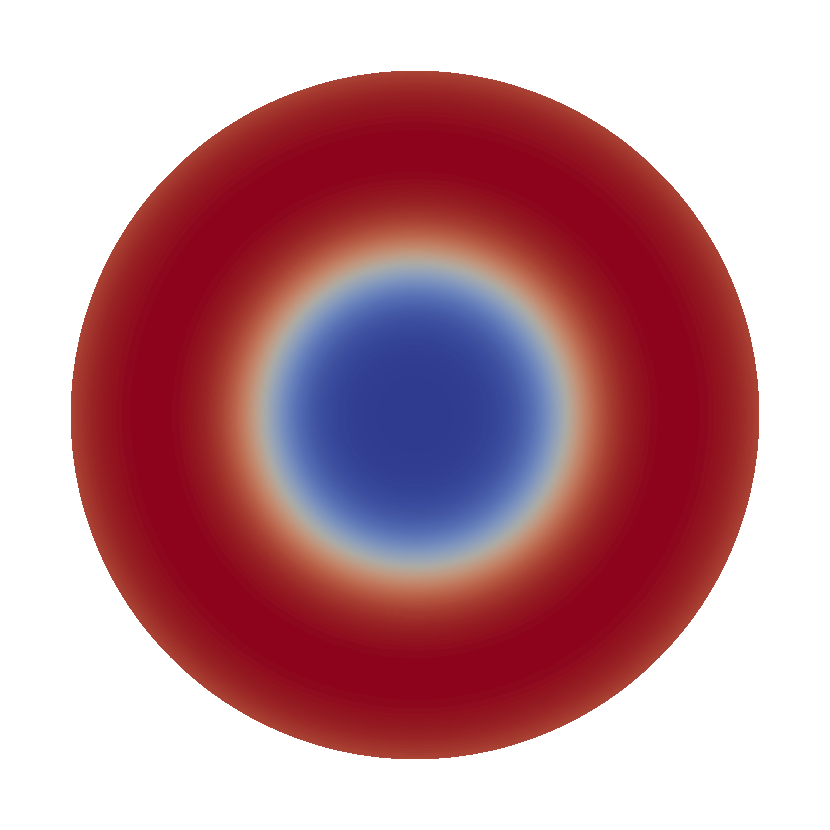}
\caption{$D=4$}
\end{subfigure}
\begin{subfigure}[b]{0.095\textwidth}
\includegraphics[width=\textwidth]{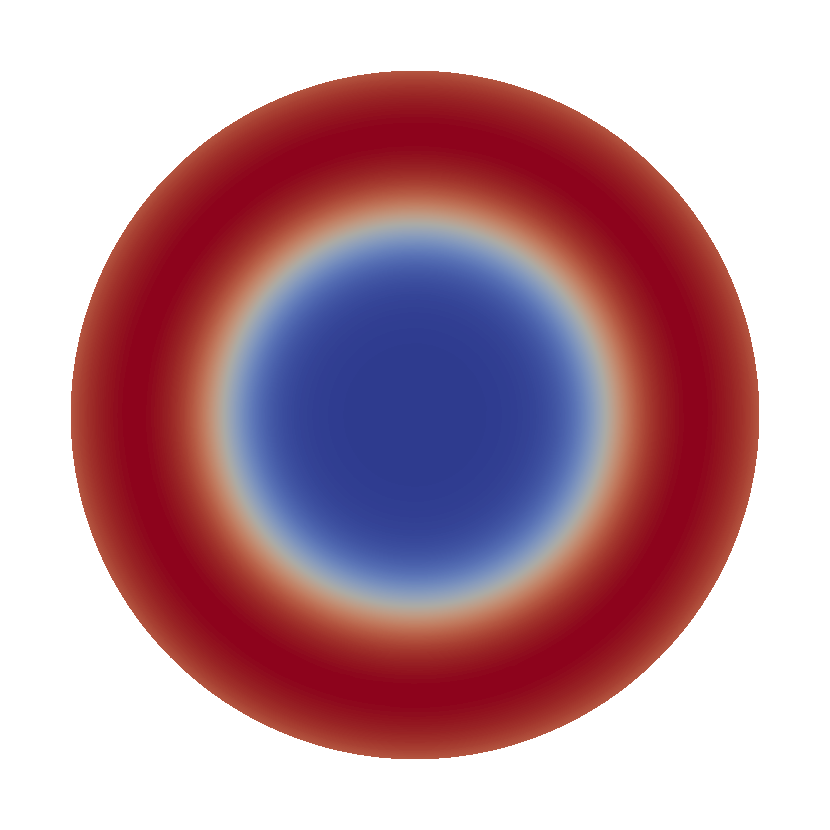}
\caption{$D=5$}
\end{subfigure}
\begin{subfigure}[b]{0.095\textwidth}
\includegraphics[width=\textwidth]{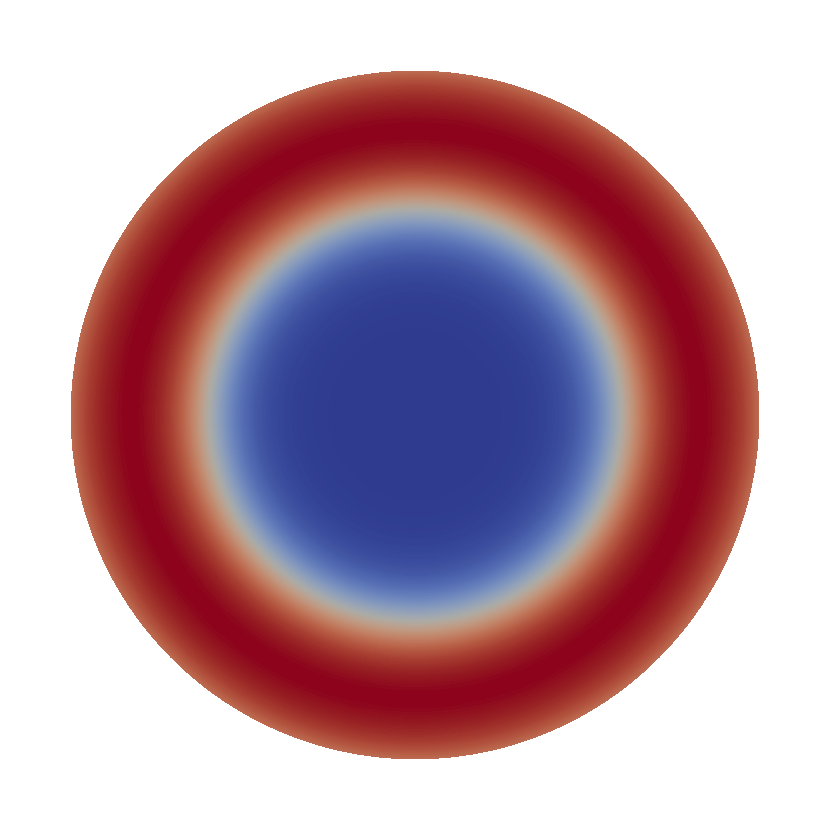}
\caption{$D=6$}
\end{subfigure}
\begin{subfigure}[b]{0.095\textwidth}
\includegraphics[width=\textwidth]{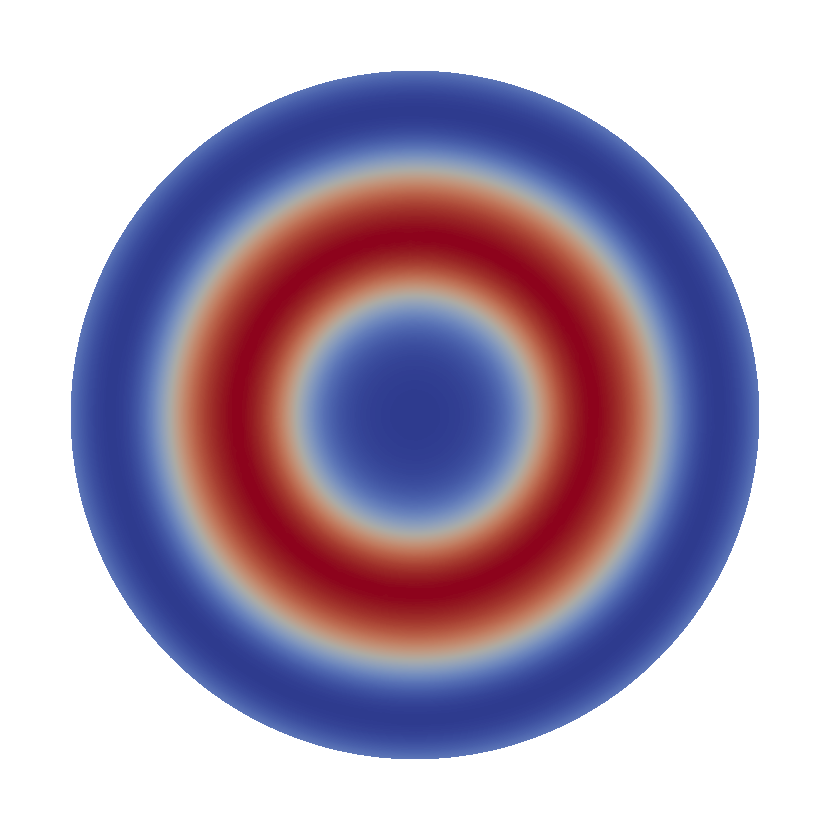}
\caption{$D=7$}
\end{subfigure}
\begin{subfigure}[b]{0.095\textwidth}
\includegraphics[width=\textwidth]{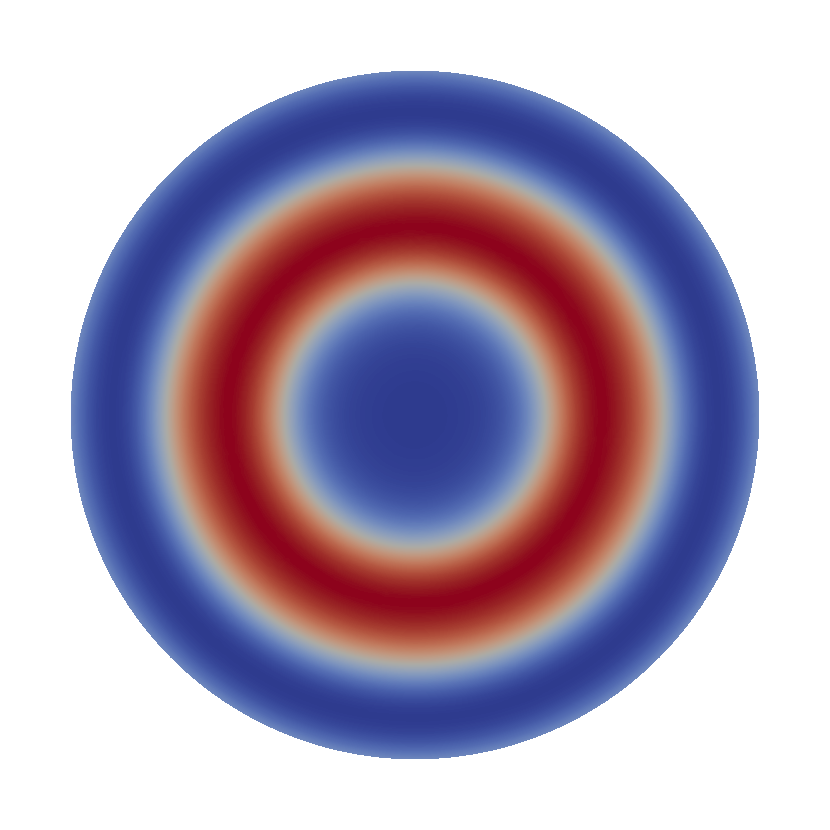}
\caption{$D=8$}
\end{subfigure}
\begin{subfigure}[b]{0.04\textwidth}
\includegraphics[width=\textwidth]{pics/legend.png}
\caption{}
\end{subfigure}
\caption{Experiment of Section~\ref{sec:numerics1}.
Magnetization $m_3$ of the relaxed state for the skyrmion-like initial condition~{\rm(ii)} and different values of the DMI constant (in \si{\milli\joule\per\square\meter}).
The pictures refer to the states computed with TPS1 ($\theta=1$).}
\label{fig:relaxed:skyrmion-like}
\end{figure}
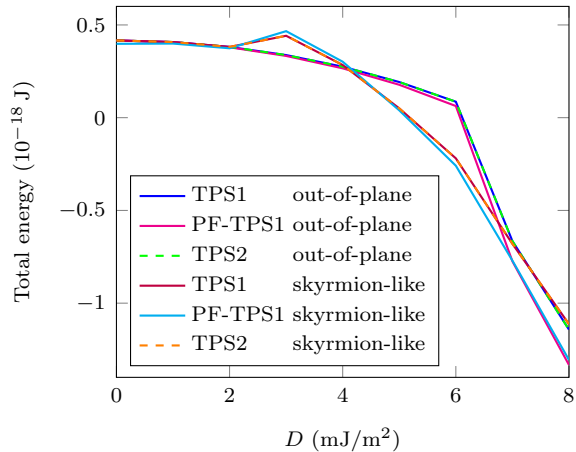
\begin{figure}[b]
\centering
\begin{tikzpicture}
\pgfplotstableread{pics/ex1/energy_fm_tps1.dat}{\fm}
\pgfplotstableread{pics/ex1/energy_sl_tps1.dat}{\sl}
\pgfplotstableread{pics/ex1/energy_fm_pftps1.dat}{\fmpf}
\pgfplotstableread{pics/ex1/energy_sl_pftps1.dat}{\slpf}
\pgfplotstableread{pics/ex1/energy_fm_tps2.dat}{\fmtwo}
\pgfplotstableread{pics/ex1/energy_sl_tps2.dat}{\sltwo}
\begin{axis}[
height = 65mm,
xlabel={\tiny $D$ (\si{\milli\joule\per\square\meter})},
ylabel={\tiny Total energy (\SI{e-18}{\joule})},
xmin=0,
xmax=8,
ymin=-1.4,
ymax=0.6,
legend style={legend pos=south west, legend cell align = left, font = \tiny}
]
\addplot[blue, thick] table[x=D, y=energy]{\fm};
\addplot[magenta, thick] table[x=D, y=energy]{\fmpf};
\addplot[green, dashed, thick] table[x=D, y=energy]{\fmtwo};
\addplot[purple, thick] table[x=D, y=energy]{\sl};
\addplot[cyan, thick] table[x=D, y=energy]{\slpf};
\addplot[orange, dashed, thick] table[x=D, y=energy]{\sltwo};
\legend{
TPS1 \ \ \ \ \ out-of-plane, 
PF-TPS1 out-of-plane,
TPS2 \ \ \ \ \ out-of-plane,
TPS1 \ \ \ \ \ skyrmion-like,
PF-TPS1 skyrmion-like,
TPS2 \ \ \ \ \ skyrmion-like}
\end{axis}
\end{tikzpicture}
\caption{Experiment of Section~\ref{sec:numerics1}.
Total energy of the relaxed state as a function of the DMI constant $D$ for the two considered initial conditions and the three proposed algorithms.}
\label{fig:energyVSdm}
\end{figure}
\par
In the case of the uniform out-of-plane initial condition, the stable state remains a quasi-uniform ferromagnetic state for the values $D=$ \num{0}, \dots, \num{6} \si{\milli\joule\per\square\meter} and turns into a multidomain state for the values $D=$ \num{7}, \num{8} \si{\milli\joule\per\square\meter}; see Figure~\ref{fig:relaxed:fm}.
For $D=$ \num{0}, \dots, \num{6} \si{\milli\joule\per\square\meter}, the slight decrease of the total energy for increasing values of $D$ corresponds to an inward tilt of the magnetization on the boundary of the disk.
In the case of the skyrmion-like initial condition, the stable state is a quasi-uniform ferromagnetic state for the values $D=$ \num{0}, \num{1}, \num{2} \si{\milli\joule\per\square\meter}, a skyrmion for the values $D=$ \num{3}, \dots, \num{6} \si{\milli\joule\per\square\meter}, and a multidomain state for the values $D=$ \num{7}, \num{8} \si{\milli\joule\per\square\meter}; see Figure~\ref{fig:relaxed:skyrmion-like}.
The skyrmion size, i.e., the diameter of the circle $\{ m_3 = 0 \}$ in the $x_1 x_2$-plane, increases from the minimum value of circa \SI{14}{\nano\meter} for $D=$ \SI{3}{\milli\joule\per\square\meter} to the maximum value of circa \SI{48}{\nano\meter} for $D=$ \SI{6}{\milli\joule\per\square\meter}.
As observed in~\cite{scrtf2013}, the fact that for $D=$ \num{3}, \dots, \num{6} \si{\milli\joule\per\square\meter}, which are realistic values for the DMI constant, both the ferromagnetic state and the skyrmion state can be stabilized is very relevant for applications.
Indeed, this bistability can be exploited to code the information in future recording devices (the presence and the absence of a skyrmion can be used to encode one bit); see, e.g., \cite{fcs2013,tmztcf2014}.
\par
In Figure~\ref{fig:energyVSdm}, we plot the total energy of the relaxed state for different values of the DMI constant.
The energy values obtained with TPS1 (the results refer to the case $\theta=1$) and TPS2 are in perfect quantitative agreement with each other and with those reported in~\cite[Figure~1]{scrtf2013}.
The use of PF-TPS1 preserves the qualitative outcome of the experiment, but the quantitative agreement of the energy values with those of~\cite[Figure~1]{scrtf2013}, as a result of the violation of the pointwise constraint $\abs{\mm}=1$, is inevitably lost.
\subsection{Field-induced dynamics of skyrmions in nanodisks} \label{sec:numerics2}
We numerically investigate the stability and the induced dynamics of isolated magnetic skyrmions in helimagnetic materials in response to an applied field pulse.
The sample under consideration is a magnetic nanodisk of diameter \SI{140}{\nano\meter} ($x_1 x_2$-plane) and thickness \SI{10}{\nano\meter} ($x_3$-direction).
The effective field in~\eqref{eq:llg:physical} consists of exchange interaction, bulk DMI, applied external field, and stray field, i.e.,
\begin{equation*}
\Heff(\mm)
= \frac{2A}{\mu_0 \Ms} \Lapl\mm
- \frac{2D}{\mu_0 \Ms} \curl\mm
+ \Hext
+ \Hstray(\mm).
\end{equation*}
We use the material parameters of iron-germanium (\ch{FeGe}), i.e., $A=$ \SI{8.78e-12}{\joule\per\meter}, $D=$ \SI{1.58e-3}{\joule\per\square\meter}, and $\Ms=$ \SI{3.84e5}{\ampere\per\meter}; see, e.g., \cite{babcwcvhcsmf2017}.
\begin{figure}[b]
\captionsetup[subfigure]{labelfont=rm}
\centering
\begin{subfigure}{\textwidth}
\begin{minipage}{15cm}
\centering
\raisebox{-0.5\height}{\includegraphics[height=4cm]{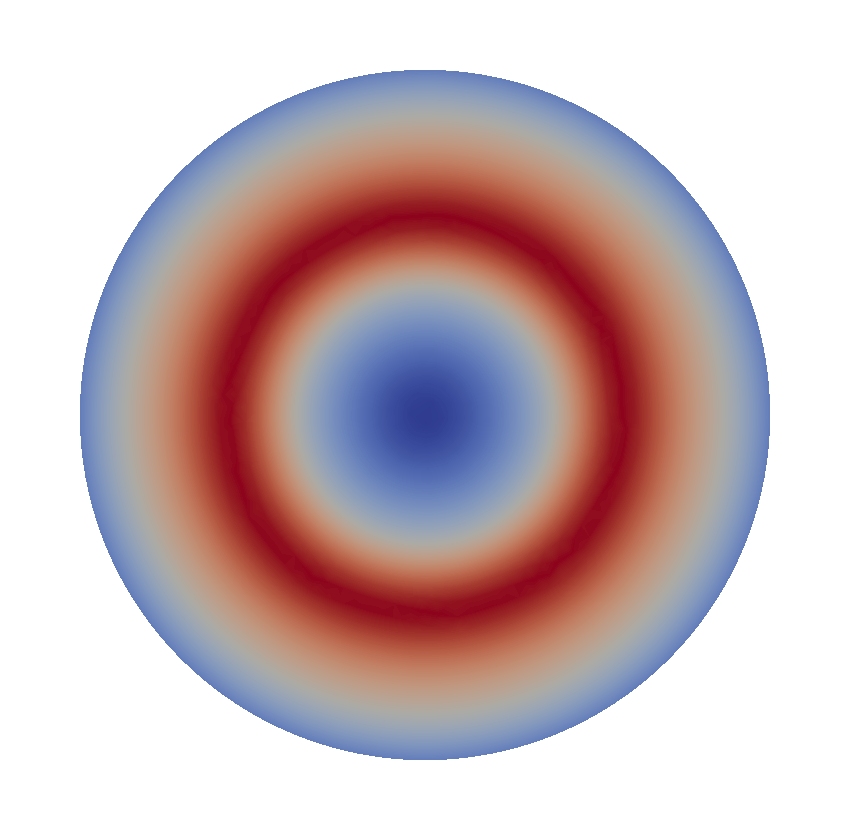}}
\raisebox{-0.5\height}{\includegraphics[height=3cm]{pics/legend.png}}
\hspace*{3mm}\raisebox{-0.5\height}{\includegraphics[height=4cm]{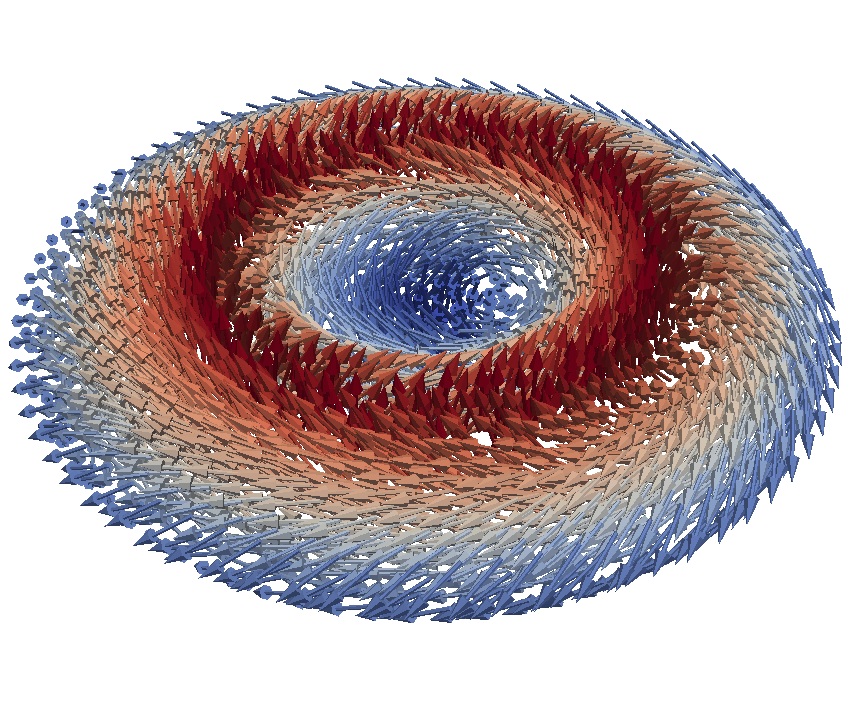}}
\end{minipage}
\caption{Relaxed skyrmion state that we use as initial condition obtained by relaxing a uniform out-of-plane state for \SI{1}{\nano\second}.}
\label{fig:start}
\end{subfigure}
\begin{subfigure}{\textwidth}
\begin{minipage}{15cm}
\centering
\raisebox{-0.5\height}{\includegraphics[height=4cm]{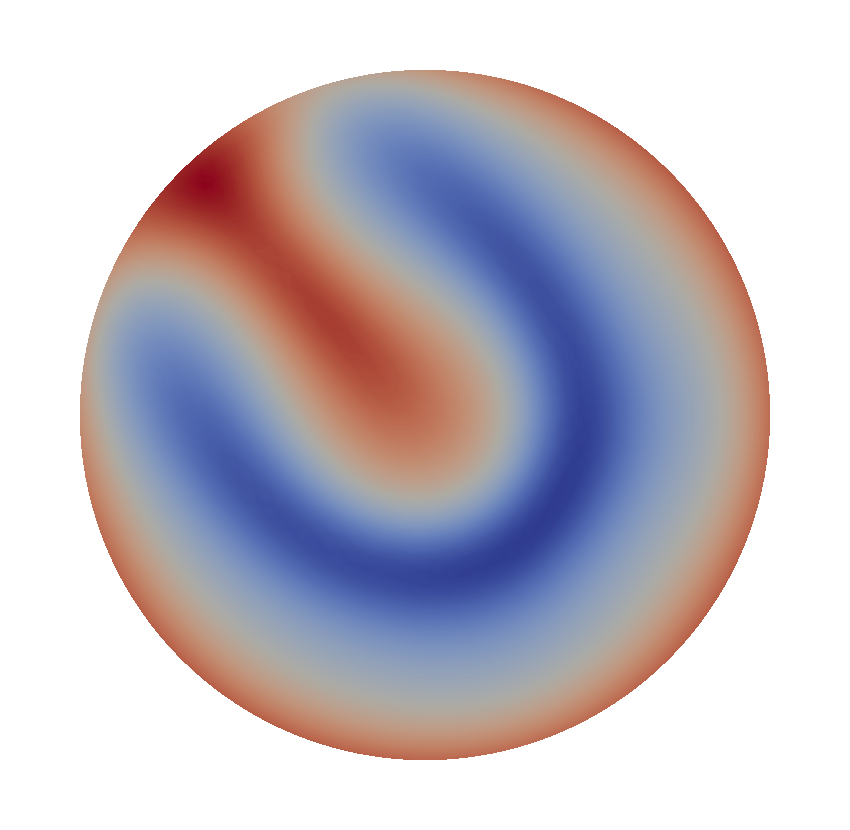}}
\raisebox{-0.5\height}{\includegraphics[height=3cm]{pics/legend.png}}
\hspace*{3mm}\raisebox{-0.5\height}{\includegraphics[height=4cm]{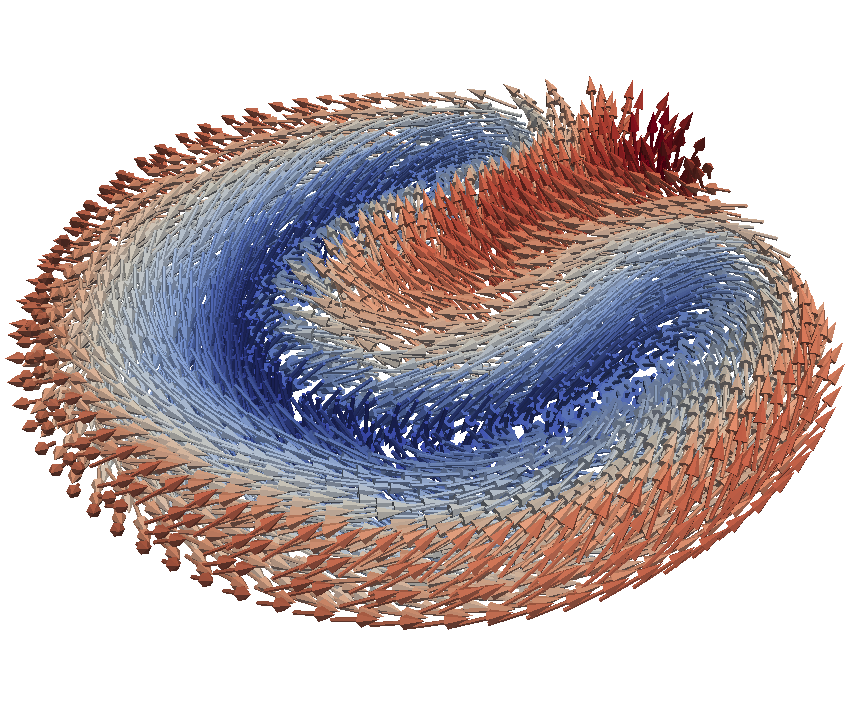}}
\end{minipage}
\caption{Metastable horseshoe state obtained by applying a field pulse of maximum intensity $\mu_0 H_{\mathrm{max}} =$ \SI{200}{\milli\tesla} to the skyrmion of~(a) and relaxing the system for \SI{10}{\nano\second}.}
\label{fig:horseshoe}
\end{subfigure}
\caption{Experiment of Section~\ref{sec:numerics2}.
Relaxed magnetization states: 2D view (left) and 3D view (right).
The pictures refer to the states computed with TPS2.}
\end{figure}
\begin{figure}[t]
\centering
\begin{tikzpicture}
\begin{axis}[
height = 50mm,
xlabel={\tiny $t$ (\si{\pico\second})},
xmin=0,
xmax=150,
ymin=0,
ymax=80,
xtick={0,40,110,150},
xticklabels={0,40,110,150},
ytick={0,60},
yticklabels={0,$H_{\mathrm{max}}$},
]
\addplot[blue,thick] coordinates {(0,0) (40,60)};
\addlegendentry{\tiny $H(t)$}
\addplot[blue,thick] coordinates {(40,60) (110,60)};
\addplot[blue,thick] coordinates {(110,60) (150,0)};
\addplot[dashed] coordinates {(40,0) (40,60)};
\addplot[dashed] coordinates {(110,0) (110,60)};
\end{axis}
\end{tikzpicture}
\caption{Experiment of Section~\ref{sec:numerics2}.
Structure of the applied field pulse:
The field intensity increases linearly in time for \SI{40}{ps} to reach the maximum value $H_{\mathrm{max}}$.
It is then constant and equal to $H_{\mathrm{max}}$ for \SI{70}{ps}.
Finally, it decreases linearly for \SI{40}{ps} to reach the value $0$.
}
\label{fig:field}
\end{figure}
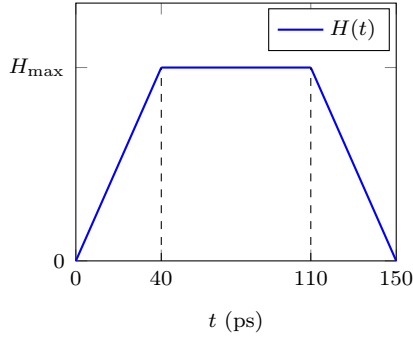
The initial condition for our experiment is obtained by setting $\Hext \equiv (0,0,0)$ and relaxing a uniform out-of-plane ferromagnetic state $\mm^0 \equiv (0,0,1)$ for \SI{3}{\nano\second}.
For the relaxation process, we choose the large value $\alpha=$ \num{1} for the Gilbert damping constant, since we are not interested in the precise magnetization dynamics.
The resulting relaxed state is the skyrmion depicted in Figure~\ref{fig:start}.
Starting from this configuration, we perturb the system from its equilibrium by applying an in-plane field pulse $\Hext(t) = (H(t),0,0)$ of maximum intensity $H_{\mathrm{max}}>0$ for \SI{150}{ps}; see Figure~\ref{fig:field}.
Then, we turn off the applied external field, i.e., $\Hext \equiv (0,0,0)$, and let the system relax to equilibrium. 
In order to capture all possible excitation modes, during the application of the field and the subsequent relaxation process, we set the value of the Gilbert damping constant to $\alpha=$ \num{0.002}, which is considerably smaller than the experimental value of $\alpha=$ \num{0.28} measured for \ch{FeGe}; see~\cite{babcwcvhcsmf2017}.
To probe the limit of the stability of the skyrmion, we test different values for the maximum intensity of the field $H_{\mathrm{max}}$, namely $\mu_0 H_{\mathrm{max}} =$ \num{1}, \num{2}, \num{5}, \num{10}, \num{20}, \num{50}, \num{100}, \num{200} \si{\milli\tesla}.
For the spatial discretization, we consider a regular partition of the nanodisk consisting of \num{36501} tetrahedra (mesh size of \SI{3}{\nano\meter}).
For the time discretization, we consider a uniform time-step size of \SI{0.1}{\pico\second}.
\par
In Figure~\ref{fig:average}, we plot the first \SI{10}{\nano\second} of the time evolution of the second component of the spatially averaged magnetization of the sample, i.e., $\langle m_2(t) \rangle = \abs{\Omega}^{-1} \int_{\Omega} m_2(\xx,t)\, \dx$.
We see that, for the values $\mu_0 H_{\mathrm{max}} =$ \num{1}, \num{2}, \num{5}, \num{10}, \num{20}, \num{50} \si{\milli\tesla}, the induced dynamics is a periodic damped precession of the skyrmion around the center of the sample, which comes back to the initial stable configuration by the relaxation process.
As expected, both the deflection from the stable symmetric initial state and the amplitude of the oscillations increase for larger values of $H_{\mathrm{max}}$.
For the value $\mu_0 H_{\mathrm{max}} =$ \SI{100}{\milli\tesla}, the skyrmion is critically deformed by the applied field pulse, but the initial stable configuration is recovered by the relaxation process.
Note that a different oscillating mode comes into play in this case.
For $\mu_0 H_{\mathrm{max}} =$ \SI{200}{\milli\tesla}, the skyrmion is destroyed.
After approximately \SI{3.5}{\nano\second} of chaotic dynamics, the magnetization configuration turns into a horseshoe state which then starts to rotate around the center of the sample; see Figure~\ref{fig:horseshoe}.
\par
As observed for the experiment of Section~\ref{sec:numerics1}, also in this case the results obtained with TPS1 and TPS2 are in full quantitative agreement with each other.
The use of PF-TPS1 preserves the qualitative outcome of the experiment, but the computed quantities, e.g., the amplitudes of the oscillations depicted in Figure~\ref{fig:average}, are slightly perturbed.
\par
The presented experiment is a preliminary study to investigate the stability and the dynamics of a skyrmion in the presence of a field pulse.
This is done to explore the possibility to use time-resolved scanning Kerr microscopy~\cite{kshmdah2017} which is based on the interplay between laser and field pulses to directly map the dynamics of magnetic skyrmions~\cite{kshmdlah2016}.
\begin{figure}[ht]
\centering
\begin{tikzpicture}
\pgfplotstableread{pics/ex2/data/001mT.dat}{\data}
\begin{axis}[
title = {$\mu_0 H_{\mathrm{max}} =$ \SI{1}{\milli\tesla}},
width = 55mm,
xlabel = {\tiny $t$ (\si{\nano\second})},
ylabel={\tiny $\langle m_2(t) \rangle$},
xmin = 0,
xmax = 10.15,
ymin = -0.5,
ymax = 0.5,
xtick = {0.15,1,2,3,4,5,6,7,8,9,10},
xticklabels = {0.15,1,2,3,4,5,6,7,8,9,10},
]
\addplot[blue,no marks,thick] table[x=t,y=my]{\data};
\end{axis}
\end{tikzpicture}
\qquad
\begin{tikzpicture}
\pgfplotstableread{pics/ex2/data/002mT.dat}{\data}
\begin{axis}[
title = {$\mu_0 H_{\mathrm{max}} =$ \SI{2}{\milli\tesla}},
width = 55mm,
xlabel = {\tiny $t$ (\si{\nano\second})},
ylabel={\tiny $\langle m_2(t) \rangle$},
xmin = 0,
xmax = 10.15,
ymin = -0.5,
ymax = 0.5,
xtick = {0.15,1,2,3,4,5,6,7,8,9,10},
xticklabels = {0.15,1,2,3,4,5,6,7,8,9,10},
]
\addplot[blue,no marks,thick] table[x=t,y=my]{\data};
\end{axis}
\end{tikzpicture}
\\
\begin{tikzpicture}
\pgfplotstableread{pics/ex2/data/005mT.dat}{\data}
\begin{axis}[
title = {$\mu_0 H_{\mathrm{max}} =$ \SI{5}{\milli\tesla}},
width = 55mm,
xlabel = {\tiny $t$ (\si{\nano\second})},
ylabel={\tiny $\langle m_2(t) \rangle$},
xmin = 0,
xmax = 10.15,
ymin = -0.5,
ymax = 0.5,
xtick = {0.15,1,2,3,4,5,6,7,8,9,10},
xticklabels = {0.15,1,2,3,4,5,6,7,8,9,10},
]
\addplot[blue,no marks,thick] table[x=t,y=my]{\data};
\end{axis}
\end{tikzpicture}
\qquad
\begin{tikzpicture}
\pgfplotstableread{pics/ex2/data/010mT.dat}{\data}
\begin{axis}[
title = {$\mu_0 H_{\mathrm{max}} =$ \SI{10}{\milli\tesla}},
width = 55mm,
xlabel = {\tiny $t$ (\si{\nano\second})},
ylabel={\tiny $\langle m_2(t) \rangle$},
xmin = 0,
xmax = 10.15,
ymin = -0.5,
ymax = 0.5,
xtick = {0.15,1,2,3,4,5,6,7,8,9,10},
xticklabels = {0.15,1,2,3,4,5,6,7,8,9,10},
]
\addplot[blue,no marks,thick] table[x=t,y=my]{\data};
\end{axis}
\end{tikzpicture}
\\
\begin{tikzpicture}
\pgfplotstableread{pics/ex2/data/020mT.dat}{\data}
\begin{axis}[
title = {$\mu_0 H_{\mathrm{max}} =$ \SI{20}{\milli\tesla}},
width = 55mm,
xlabel = {\tiny $t$ (\si{\nano\second})},
ylabel={\tiny $\langle m_2(t) \rangle$},
xmin = 0,
xmax = 10.15,
ymin = -0.5,
ymax = 0.5,
xtick = {0.15,1,2,3,4,5,6,7,8,9,10},
xticklabels = {0.15,1,2,3,4,5,6,7,8,9,10},
]
\addplot[blue,no marks,thick] table[x=t,y=my]{\data};
\end{axis}
\end{tikzpicture}
\qquad
\begin{tikzpicture}
\pgfplotstableread{pics/ex2/data/050mT.dat}{\data}
\begin{axis}[
title = {$\mu_0 H_{\mathrm{max}} =$ \SI{50}{\milli\tesla}},
width = 55mm,
xlabel = {\tiny $t$ (\si{\nano\second})},
ylabel={\tiny $\langle m_2(t) \rangle$},
xmin = 0,
xmax = 10.15,
ymin = -0.5,
ymax = 0.5,
xtick = {0.15,1,2,3,4,5,6,7,8,9,10},
xticklabels = {0.15,1,2,3,4,5,6,7,8,9,10},
]
\addplot[blue,no marks,thick] table[x=t,y=my]{\data};
\end{axis}
\end{tikzpicture}
\\
\begin{tikzpicture}
\pgfplotstableread{pics/ex2/data/100mT.dat}{\data}
\begin{axis}[
title = {$\mu_0 H_{\mathrm{max}} =$ \SI{100}{\milli\tesla}},
width = 55mm,
xlabel = {\tiny $t$ (\si{\nano\second})},
ylabel={\tiny $\langle m_2(t) \rangle$},
xmin = 0,
xmax = 10.15,
ymin = -0.5,
ymax = 0.5,
xtick = {0.15,1,2,3,4,5,6,7,8,9,10},
xticklabels = {0.15,1,2,3,4,5,6,7,8,9,10},
]
\addplot[blue,no marks,thick] table[x=t,y=my]{\data};
\end{axis}
\end{tikzpicture}
\qquad
\begin{tikzpicture}
\pgfplotstableread{pics/ex2/data/200mT.dat}{\data}
\begin{axis}[
title = {$\mu_0 H_{\mathrm{max}} =$ \SI{200}{\milli\tesla}},
width = 55mm,
xlabel = {\tiny $t$ (\si{\nano\second})},
ylabel={\tiny $\langle m_2(t) \rangle$},
xmin = 0,
xmax = 10.15,
ymin = -0.5,
ymax = 0.5,
xtick = {0.15,1,2,3,4,5,6,7,8,9,10},
xticklabels = {0.15,1,2,3,4,5,6,7,8,9,10},
]
\addplot[blue,no marks,thick] table[x=t,y=my]{\data};
\end{axis}
\end{tikzpicture}
\caption{Experiment of Section~\ref{sec:numerics2}.
Time evolution of $\langle m_2 \rangle$ for applied field pulses with different intensities.
The plots refer to the results computed with TPS2.
}
\label{fig:average}
\end{figure}
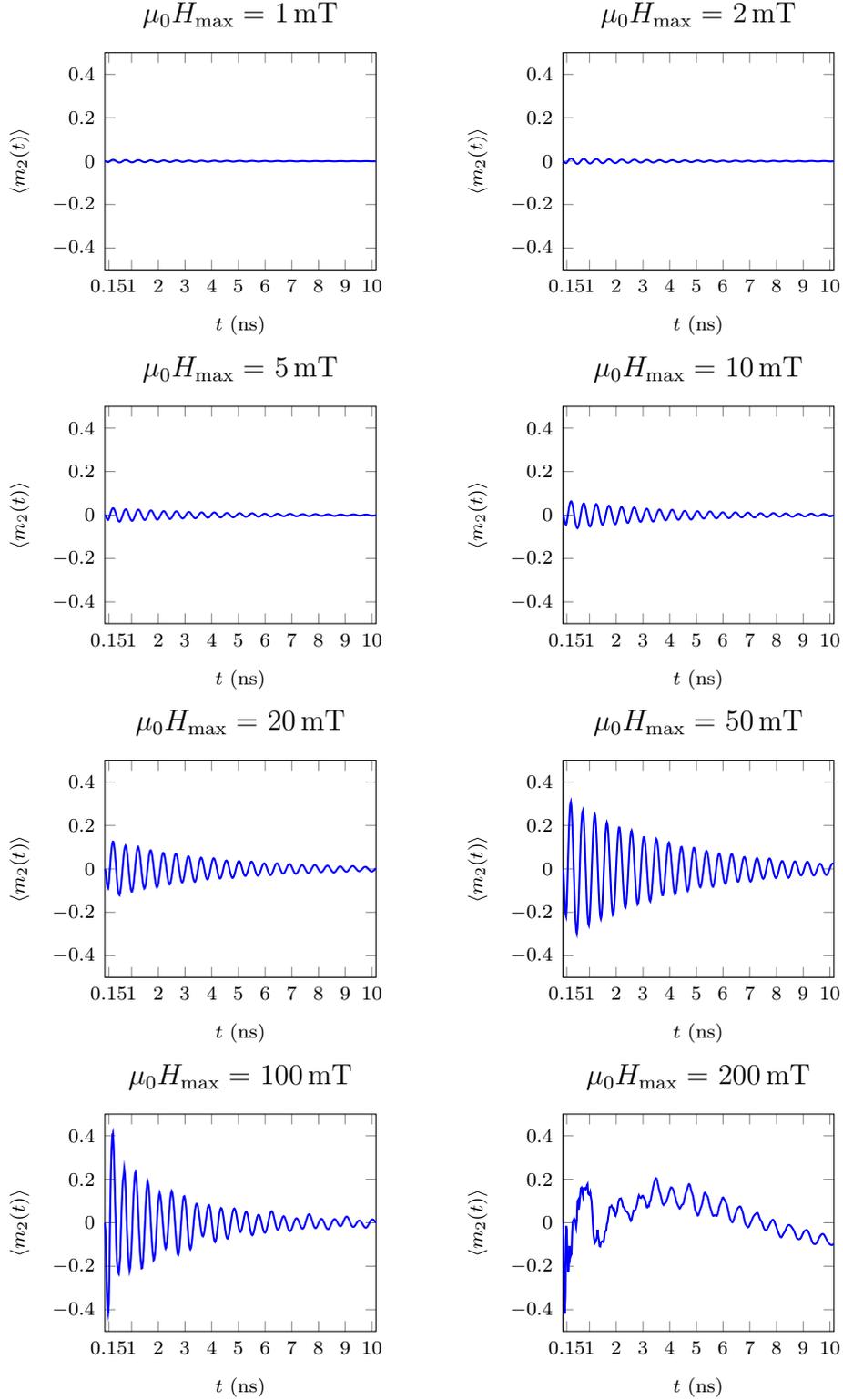
\section{Convergence analysis} \label{sec:convergence}
In this section, we show that all proposed algorithms are well-posed and we present the proof of Theorem~\ref{thm:main}.
To establish the convergence result, we use the standard energy method for proving existence of solutions of linear second-order parabolic problems; see, e.g., \cite[Section~7.1.2]{evans2010}.
The main difference is that, following~\cite{alouges2008a,ahpprs2014,akst2014}, the construction of approximate solutions is not obtained by applying the Galerkin method based on a basis of appropriately normalized eigenfunctions of the Laplace operator, but rather by using the finite element solutions delivered by the numerical schemes.
\subsection{Preliminaries}
We introduce some further notation and collect some auxiliary results.
We consider the nodal interpolant $\interp:C^0(\overline{\Omega})\to\mathcal{S}^1(\Th)$, which is defined by $\interp[v](\zz)=v(\zz)$ for all $\zz\in\Nh$ and $v \in C^0(\overline{\Omega})$.
It is well known that, for $\kappa$-shape-regular meshes and any integer $0 \leq m \leq 2$, the nodal interpolant satisfies the approximation property
\begin{equation} \label{eq:nodalInterpolant}
\norm[L^2(\Omega)]{D^m(v-\interp[v])} \leq C h^{2-m} \norm[L^2(\Omega)]{D^2 v} \quad \text{for all } v \in H^2(\Omega),
\end{equation}
where the constant $C>0$ depends only on $\kappa$.
We denote the vector-valued realization of the nodal interpolant by $\Interp:\CC^0(\overline{\Omega})\to\mathcal{S}^1(\Th)^3$.
The following classical inverse estimate requires the quasi-uniformity of the underlying family of meshes:
For any $1 \leq p \leq \infty$, it holds that
\begin{equation} \label{eq:inverse}
\norm[\LL^p(\Omega)]{\Grad\pphih}
\leq \Cinv h^{-1} \norm[\LL^p(\Omega)]{\pphih}
\quad \text{for all } \pphih\in\mathcal{S}^1(\Th)^3,
\end{equation}
where $\Cinv>0$ depends only on $\kappa$ and $p$. 
Standard scaling arguments show that, for any $1 \leq p < \infty$, we have the discrete norm equivalence
\begin{equation} \label{eq:discreteNormEquivalence}
\Cnorm^{-1} \norm[\LL^p(\Omega)]{\pphih}
\leq \left( h^3 \sum_{\zz\in\Nh} \abs{\pphih(\zz)}^p \right)^{1/p}
\leq \Cnorm \norm[\LL^p(\Omega)]{\pphih}
\quad \text{for all } \pphih\in\mathcal{S}^1(\Th)^3,
\end{equation}
where $\Cnorm>0$ depends only on $\kappa$ and $p$.
\par
To ensure that the approximate magnetization belongs to $\Mh$, TPS1 and TPS2 employ the nodal projection.
We refer to~\cite[Lemma~3.2]{bartels2005} for the proof that the nodal projection $\pphih \mapsto \Interp\big[\pphih/\abs{\pphih}\big]$ does not increase the exchange energy of a discrete function if the underlying mesh fulfills a weak acuteness condition.
Specifically, the angle condition~\eqref{eq:angleCondition} ensures that any $\pphih \in \mathcal{S}^1(\Th)^3$ with $\abs{\pphih(\zz)}\geq 1$ for all $\zz\in\Nh$ satisfies that
\begin{equation} \label{eq:nodalProjectionEnergy}
\norm[\LL^2(\Omega)]{\Grad\Interp\big[\pphih/\abs{\pphih}\big]} \leq \norm[\LL^2(\Omega)]{\Grad\pphih}.
\end{equation}
Owing to the application of the nodal projection, for all $0 \leq i \leq N-1$, the iterates of TPS1 and TPS2 satisfy $\norm[\LL^{\infty}(\Omega)]{\mmh^{i+1}}=1$ and the geometric estimates
\begin{equation*}
\abs{\mmh^{i+1}(\zz)-\mmh^i(\zz)} \leq k \abs{\vvh^i(\zz)}
\ \text{and} \
\abs{\mmh^{i+1}(\zz)-\mmh^i(\zz)-k\vvh^i(\zz)} \leq \frac{1}{2} k^2 \abs{\vvh^i(\zz)}^2
\end{equation*}
for any $\zz\in\Nh$; see~\cite{aj2006,bkp2008}.
With~\eqref{eq:discreteNormEquivalence}, these nodewise inequalities are turned into
\begin{subequations} \label{eq:geoEstCor}
\begin{align}
\label{eq:geoEstCor1}
\norm[\LL^p(\Omega)]{\mmh^{i+1}-\mmh^i} & \leq \Cgeo k \norm[\LL^p(\Omega)]{\vvh^i},\\
\label{eq:geoEstCor2}
\norm[\LL^p(\Omega)]{\mmh^{i+1}-\mmh^i-k\vvh^i} & \leq \Cgeo k^2 \norm[\LL^{2p}(\Omega)]{\vvh^i}^2,
\end{align}
\end{subequations}
where $\Cgeo>0$ depends only on $\kappa$ and $p$.
In the case of PF-TPS1, where the nodal projection is omitted, the geometric estimates~\eqref{eq:geoEstCor} become trivial, but the equality $\lVert \mmh^{i+1} \rVert_{\LL^{\infty}(\Omega)} = 1$ does not hold anymore.
However, for any $1 \leq j \leq N$, the linear time-stepping yields the recursive relation
\begin{equation} \label{eq:pftps1:recursive}
\vert\mmh^j(\zz)\vert^2 = \abs{\mmh^0(\zz)}^2 + k^2 \sum_{i=0}^{j-1} \abs{\vvh^i(\zz)}^2 \quad \text{for all } \zz\in\Nh.
\end{equation}
Together with~\eqref{eq:discreteNormEquivalence}, this leads to the estimate
\begin{equation} \label{eq:pftps1:L2bound}
\Cnorm^{-4} \lVert\mmh^j\lVert_{\LL^2(\Omega)}^2
\leq \norm[\LL^2(\Omega)]{\mmh^0}^2 + k^2 \sum_{i=0}^{j-1} \norm[\LL^2(\Omega)]{\vvh^i}^2.
\end{equation}
\subsection{Well-posedness}
In the following proposition, we prove that the three proposed algorithms are all well-posed.
\begin{proposition} \label{prop:wellposed}
Let $0 \leq i \leq N-1$.
There exists a unique solution $\vvh^i\in\Kh(\mmh^i)$ of~\eqref{eq:tps1} and~\eqref{eq:pftps1}.
There exists a threshold time-step size $k_0>0$, which depends only on $\alpha$, $\lex$, and $\ldm$, such that, if $k \leq k_0$, there exists a unique solution $\vvh^i\in\Kh(\mmh^i)$ of~\eqref{eq:tps2}.
The time-steppings of all algorithms are well-defined.
\end{proposition}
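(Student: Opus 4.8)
For each fixed $0 \leq i \leq N-1$, equations \eqref{eq:tps1}, \eqref{eq:pftps1}, \eqref{eq:tps2} are square linear systems posed on the finite-dimensional space $\Kh(\mmh^i)$, with the right-hand side a bounded linear functional on it (note that the systems for TPS1 and PF-TPS1 coincide). Hence it suffices to show that the underlying bilinear form $b_h(\cdot,\cdot)$ is positive definite on $\Kh(\mmh^i)$: once $b_h(\vvh,\vvh)>0$ for every $0 \neq \vvh \in \Kh(\mmh^i)$, the associated linear map is injective, thus bijective because $\Kh(\mmh^i)$ is finite-dimensional, which yields existence and uniqueness of $\vvh^i$. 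In all three cases the algebraic observation that makes this work is that testing with $\pphih=\vvh$ annihilates the gyroscopic term: since $(\mmh^i \times \vvh)\cdot\vvh = 0$ pointwise, one has $\inner{\mmh^i\times\vvh}{\vvh}=0$.

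For TPS1 and PF-TPS1 this is immediate. The bilinear form is $(\vvh,\pphih)\mapsto \alpha\inner{\vvh}{\pphih} + \inner{\mmh^i\times\vvh}{\pphih} + \lex^2\theta k\inner{\Grad\vvh}{\Grad\pphih}$, so testing with $\vvh$ gives $\alpha\norm[\LL^2(\Omega)]{\vvh}^2 + \lex^2\theta k\norm[\LL^2(\Omega)]{\Grad\vvh}^2 \geq \alpha\norm[\LL^2(\Omega)]{\vvh}^2$, using $\theta\geq 0$. Since $\alpha>0$, the form is positive definite on $\Kh(\mmh^i)$ with no restriction on $h$ or $k$, and unique solvability follows.

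For TPS2 the bilinear form additionally carries the stabilized exchange term $\tfrac{\lex^2}{2}k[1+\rho(k)]\inner{\Grad\vvh}{\Grad\pphih}$, the reaction term $\inner{W_{M(k)}(\lambda_h^i)\vvh}{\pphih}$, and the two DMI terms $\tfrac{\ldm}{4}k\inner{\vvh}{\curl\pphih} + \tfrac{\ldm}{4}k\inner{\curl\vvh}{\pphih}$. Testing with $\vvh$, the last two combine to $\tfrac{\ldm}{2}k\inner{\curl\vvh}{\vvh}$, which is the only indefinite (and non-symmetric) contribution, and controlling it is the main obstacle. The plan is to bound it by $\tfrac{\ldm}{2}k\,\norm[\LL^2(\Omega)]{\curl\vvh}\norm[\LL^2(\Omega)]{\vvh} \leq \tfrac{\sqrt{2}\,\ldm}{2}k\,\norm[\LL^2(\Omega)]{\Grad\vvh}\norm[\LL^2(\Omega)]{\vvh}$ and then invoke the weighted Young inequality \eqref{eq:young} with a suitable $\eps>0$ (namely $\eps = \lex^2/(\sqrt 2\,\ldm)$) so as to absorb $\tfrac{\lex^2}{4}k\norm[\LL^2(\Omega)]{\Grad\vvh}^2$ into the stronger stabilized exchange term $\tfrac{\lex^2}{2}k\norm[\LL^2(\Omega)]{\Grad\vvh}^2$, at the price of a term $\tfrac{\ldm^2}{2\lex^2}k\norm[\LL^2(\Omega)]{\vvh}^2$. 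For the reaction term, property \eqref{eq:propertyCutOff} gives $W_{M(k)}(s)\geq 2\alpha^2/(2\alpha+M(k)k)$ for all $s$, and \eqref{eq:propertyOfM} ensures $M(k)k\to 0$ as $k\to 0$; hence there is a threshold $k_0=k_0(\alpha,\lex,\ldm)>0$ such that, whenever $k\leq k_0$, one simultaneously has $W_{M(k)}(\lambda_h^i)\geq \alpha/2$ pointwise and $\tfrac{\ldm^2}{2\lex^2}k \leq \alpha/4$. Collecting the estimates then gives $b_h(\vvh,\vvh) \geq \tfrac{\alpha}{4}\norm[\LL^2(\Omega)]{\vvh}^2 + \tfrac{\lex^2}{4}k\norm[\LL^2(\Omega)]{\Grad\vvh}^2$ for $k\leq k_0$, whence positive definiteness and unique solvability of \eqref{eq:tps2}.

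It remains to check that the time-steppings are well-defined. For PF-TPS1 the update $\mmh^{i+1}=\mmh^i+k\vvh^i$ involves no division and is trivially well-defined. For TPS1 and TPS2, since $\vvh^i\in\Kh(\mmh^i)$ and $\mmh^i\in\Mh$, the nodewise orthogonality $\mmh^i(\zz)\cdot\vvh^i(\zz)=0$ together with the Pythagorean identity yields $\abs{\mmh^i(\zz)+k\vvh^i(\zz)}^2 = 1 + k^2\abs{\vvh^i(\zz)}^2 \geq 1 > 0$ for every $\zz\in\Nh$; hence the nodal projection in the normalization step is well-defined and produces an iterate $\mmh^{i+1}\in\Mh$. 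I expect the only nontrivial point in the whole argument to be the absorption of the indefinite DMI term in TPS2 and the resulting appearance of the smallness condition $k\leq k_0$.
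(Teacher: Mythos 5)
Your proposal is correct and follows essentially the same route as the paper: ellipticity of the bilinear form on the (finite-dimensional) tangent space, obtained for TPS1/PF-TPS1 from the vanishing of the gyroscopic term, and for TPS2 by absorbing the indefinite curl term via the weighted Young inequality~\eqref{eq:young} together with the lower bound~\eqref{eq:propertyCutOff} on $W_{M(k)}$ and the property $M(k)k\to0$, followed by the Pythagorean identity for the well-definedness of the nodal projection. The only cosmetic differences are your particular choice of $\eps$ and your use of the finite-dimensional injectivity-implies-bijectivity argument in place of an explicit appeal to the Lax--Milgram theorem.
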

\begin{proof}
For any $0 \leq i \leq N-1$, let $a_{hk}^i(\cdot,\cdot)$ be the bilinear form appearing on the left-hand side of~\eqref{eq:tps1} and \eqref{eq:pftps1}.
For any $\pphih\in\mathcal{S}^1(\Th)^3$, it holds that
\begin{equation*}
a_{hk}^i(\pphih,\pphih)
= \alpha \norm[\LL^2(\Omega)]{\pphih}^2
+ \lex^2 \theta k \norm[\LL^2(\Omega)]{\Grad\pphih}^2.
\end{equation*}
Hence, the bilinear form is elliptic, even on the full space $\mathcal{S}^1(\Th)^3$.
Existence and uniqueness of the solution $\vvh^i \in \Kh(\mmh^i)$ of~\eqref{eq:tps1} and \eqref{eq:pftps1} thus follow from the Lax--Milgram theorem.
\par
Similarly, for any $0 \leq i \leq N-1$, let $b_{hk}^i(\cdot,\cdot)$ denote the bilinear form on the left-hand side of~\eqref{eq:tps2}.
For any $\eps>0$ and $\pphih\in\mathcal{S}^1(\Th)^3$, using~\eqref{eq:young} and~\eqref{eq:propertyCutOff}, we deduce that
\begin{equation*}
\begin{split}
b_{hk}^i(\pphih,\pphih)
& = \inner{W_{M(k)}(\lambda_h^i)\pphih}{\pphih}
+ \frac{\lex^2}{2} k [1 + \rho(k)] \norm[\LL^2(\Omega)]{\Grad\pphih}^2
+ \frac{\ldm}{2} k \inner{\pphih}{\curl\pphih} \\
& \geq \left(\frac{2 \alpha^2}{2 \alpha + M(k) k} - \frac{\ldm}{4\eps}k \right) \norm[\LL^2(\Omega)]{\pphih}^2
+ \frac{1}{2} \left(\lex^2 - \frac{\eps\ldm}{2} \right) k \norm[\LL^2(\Omega)]{\Grad\pphih}^2.
\end{split}
\end{equation*}
We choose $\eps=\lex^2/\ldm$.
With the properties~\eqref{eq:propertyOfM} of the cut-off function $M(k)$, it follows that both coefficients in front of the norms are positive if the time-step size $k$ is sufficiently small.
Then, the bilinear form is elliptic and~\eqref{eq:tps2} admits a unique solution $\vvh^i \in \Kh(\mmh^i)$.
\par
The linear time-stepping in step~\ref{item:pftps1-2} of PF-TPS1 is clearly well-defined.
In the case of TPS1 and TPS2, which include the nodal projection, since $\mmh^i\in\Mh$ and $\vvh \in \Kh(\mmh^i)$, it holds that
\begin{equation*}
\abs{\mmh^i(\zz) + k\vvh^i(\zz)}^2
= \abs{\mmh^i(\zz)}^2 + k^2\abs{\vvh^i(\zz)}^2 = 1 + k^2\abs{\vvh^i(\zz)}^2 \geq 1
\quad \text{for any } \zz\in\Nh.
\end{equation*}
The time-steppings of TPS1 and TPS2 are therefore also well defined.
\end{proof}
\subsection{Discrete energy law and stability}
In this section, we establish the discrete energy laws and study the stability for the discrete iterates delivered by the algorithms.
We first observe that, given $C_0>0$, assumption~\eqref{eq:convergenceMh0} provides some $h_0>0$ such that $\norm[\HH^1(\Omega)]{\mmh^0} \leq C_0$ for all $h \leq h_0$.
\par
In the following proposition, we prove the result for TPS1.
\begin{proposition}[Discrete energy law and stability of TPS1] \label{prop:tps1:energy}
Let $1 \leq j \leq N$.
There exists a constant $C>0$, which depends only on $\kappa$ and $\ldm$, such that the iterates of TPS1 satisfy the discrete energy law
\begin{equation} \label{eq:tps1:energy}
\E(\mmh^j)
+ \left(\alpha - C h^{-1} k \right) k \sum_{i=0}^{j-1} \norm[\LL^2(\Omega)]{\vvh^i}^2
+ \lex^2 (\theta - 1/2) k^2 \sum_{i=0}^{j-1} \norm[\LL^2(\Omega)]{\Grad\vvh^i}^2
\leq \E(\mmh^0).
\end{equation}
Moreover, there exists a constant $C'>0$, which depends only on $\alpha$, $\kappa$, and $\ldm$, such that, if $h \leq h_0$ and $k \leq C'h$, the iterates of TPS1 satisfy the stability estimate
\begin{equation} \label{eq:tps1:stability}
\norm[\HH^1(\Omega)]{\mmh^j}^2
+ k \sum_{i=0}^{j-1} \norm[\LL^2(\Omega)]{\vvh^i}^2
+ (\theta - 1/2) k^2 \sum_{i=0}^{j-1} \norm[\LL^2(\Omega)]{\Grad\vvh^i}^2
\leq C'',
\end{equation}
where the constant $C''>0$ depends only on $\alpha$, $C_0$, $\kappa$, $\lex$, $\ldm$, and $\abs{\Omega}$.
\end{proposition}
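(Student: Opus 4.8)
The plan is the standard energy argument: test the equation defining $\vvh^i$ with $\vvh^i$, rewrite the resulting identity so that the effective-field terms reassemble an increment of $\E$, sum over the time-steps, and absorb the residual coming from the nodal projection and from the curl into the damping coefficient. First I would take $\pphih=\vvh^i\in\Kh(\mmh^i)$ in~\eqref{eq:tps1}; the gyroscopic term $\inner{\mmh^i\times\vvh^i}{\vvh^i}$ vanishes pointwise, leaving an identity for $\alpha\norm[\LL^2(\Omega)]{\vvh^i}^2+\lex^2\theta k\norm[\LL^2(\Omega)]{\Grad\vvh^i}^2$ whose right-hand side is $-\lex^2\inner{\Grad\mmh^i}{\Grad\vvh^i}-\tfrac{\ldm}{2}\big(\inner{\curl\mmh^i}{\vvh^i}+\inner{\mmh^i}{\curl\vvh^i}\big)$. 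For the exchange term I would use $2k\inner{\Grad\mmh^i}{\Grad\vvh^i}=\norm[\LL^2(\Omega)]{\Grad(\mmh^i+k\vvh^i)}^2-\norm[\LL^2(\Omega)]{\Grad\mmh^i}^2-k^2\norm[\LL^2(\Omega)]{\Grad\vvh^i}^2$, observe that $\abs{(\mmh^i+k\vvh^i)(\zz)}^2=1+k^2\abs{\vvh^i(\zz)}^2\ge1$ at every vertex $\zz\in\Nh$ by the orthogonality built into $\Kh(\mmh^i)$, and invoke the energy-decrease property~\eqref{eq:nodalProjectionEnergy} of the nodal projection under the angle condition~\eqref{eq:angleCondition}, which gives $\norm[\LL^2(\Omega)]{\Grad\mmh^{i+1}}\le\norm[\LL^2(\Omega)]{\Grad(\mmh^i+k\vvh^i)}$. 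Multiplying by $k$ and using $\theta\ge1/2$ to keep the $\Grad\vvh^i$ term on the left, this yields
\[
\alpha k\norm[\LL^2(\Omega)]{\vvh^i}^2+\lex^2(\theta-1/2)k^2\norm[\LL^2(\Omega)]{\Grad\vvh^i}^2+\frac{\lex^2}{2}\big(\norm[\LL^2(\Omega)]{\Grad\mmh^{i+1}}^2-\norm[\LL^2(\Omega)]{\Grad\mmh^i}^2\big)\le-\frac{\ldm k}{2}\big(\inner{\curl\mmh^i}{\vvh^i}+\inner{\mmh^i}{\curl\vvh^i}\big).
\]

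The heart of the matter is to recognize the right-hand side as an increment of the Dzyaloshinskii--Moriya energy $\E_{\mathrm{dm}}(\ww):=\tfrac{\ldm}{2}\inner{\curl\ww}{\ww}$. Writing $\boldsymbol{\delta}_h^i:=\mmh^{i+1}-\mmh^i$ and $\boldsymbol{e}_h^i:=\mmh^{i+1}-\mmh^i-k\vvh^i$, a bilinear expansion of $\E_{\mathrm{dm}}(\mmh^i+\boldsymbol{\delta}_h^i)$ followed by the substitution $\boldsymbol{\delta}_h^i=k\vvh^i+\boldsymbol{e}_h^i$ in the cross term gives
\[
\frac{\ldm k}{2}\big(\inner{\curl\mmh^i}{\vvh^i}+\inner{\mmh^i}{\curl\vvh^i}\big)=\E_{\mathrm{dm}}(\mmh^{i+1})-\E_{\mathrm{dm}}(\mmh^i)-\frac{\ldm}{2}\inner{\curl\boldsymbol{\delta}_h^i}{\boldsymbol{\delta}_h^i}-\frac{\ldm}{2}\big(\inner{\curl\mmh^i}{\boldsymbol{e}_h^i}+\inner{\mmh^i}{\curl\boldsymbol{e}_h^i}\big).
\]
Substituting this back, the exchange and DMI increments combine into $\E(\mmh^{i+1})-\E(\mmh^i)$, and it remains to bound the two residual terms $\tfrac{\ldm}{2}\inner{\curl\boldsymbol{\delta}_h^i}{\boldsymbol{\delta}_h^i}$ and $\tfrac{\ldm}{2}\big(\inner{\curl\mmh^i}{\boldsymbol{e}_h^i}+\inner{\mmh^i}{\curl\boldsymbol{e}_h^i}\big)$ by $Ch^{-1}k^2\norm[\LL^2(\Omega)]{\vvh^i}^2$ with $C$ depending only on $\kappa$ and $\ldm$. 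For the first I would use $\norm[\LL^2(\Omega)]{\curl\boldsymbol{\delta}_h^i}\le\sqrt2\norm[\LL^2(\Omega)]{\Grad\boldsymbol{\delta}_h^i}\le\sqrt2\,\Cinv h^{-1}\norm[\LL^2(\Omega)]{\boldsymbol{\delta}_h^i}$ from the inverse estimate~\eqref{eq:inverse} and then $\norm[\LL^2(\Omega)]{\boldsymbol{\delta}_h^i}\le\Cgeo k\norm[\LL^2(\Omega)]{\vvh^i}$ from~\eqref{eq:geoEstCor1} — crucially this carries only one power of $k$. For the second I would use $\norm[\LL^\infty(\Omega)]{\mmh^i}=1$, hence $\norm[\LL^\infty(\Omega)]{\curl\mmh^i}\lesssim h^{-1}$ via~\eqref{eq:inverse}, together with $\norm[\LL^1(\Omega)]{\boldsymbol{e}_h^i}\le\Cgeo k^2\norm[\LL^2(\Omega)]{\vvh^i}^2$ from~\eqref{eq:geoEstCor2} and $\norm[\LL^1(\Omega)]{\curl\boldsymbol{e}_h^i}\lesssim h^{-1}\norm[\LL^1(\Omega)]{\boldsymbol{e}_h^i}$, so an $L^\infty$--$L^1$ Hölder estimate again produces $\lesssim h^{-1}k^2\norm[\LL^2(\Omega)]{\vvh^i}^2$. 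Summing over $i=0,\dots,j-1$ makes the energy increments telescope and gives exactly~\eqref{eq:tps1:energy}; note that no CFL coupling was used here, only the angle condition.

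The stability estimate~\eqref{eq:tps1:stability} then follows from~\eqref{eq:tps1:energy}: choosing $C':=\alpha/(2C)$, the hypothesis $k\le C'h$ forces $\alpha-Ch^{-1}k\ge\alpha/2>0$. I would bound $\E(\mmh^0)$ from above by a constant using the upper bound in~\eqref{eq:energyNormEquivalence} and $\norm[\HH^1(\Omega)]{\mmh^0}\le C_0$ (valid for $h\le h_0$), bound $\norm[\LL^2(\Omega)]{\Grad\mmh^j}^2$ from above in terms of $\E(\mmh^j)$ plus a constant using the lower bound in~\eqref{eq:energyNormEquivalence} together with $\mmh^j\in\Mh$ (so $\norm[\LL^2(\Omega)]{\mmh^j}^2\le\abs{\Omega}$), add $\norm[\LL^2(\Omega)]{\mmh^j}^2\le\abs{\Omega}$ to reconstruct the full $\HH^1$-norm, and divide through by the fixed constants $\lex^2$ and $\alpha$.

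The main obstacle is the Dzyaloshinskii--Moriya contribution, which — unlike the exchange term — is neither sign-definite nor monotone under the nodal projection, so it cannot simply be discarded after projecting. The decisive observation is that one should telescope $\E_{\mathrm{dm}}$ through the increment $\boldsymbol{\delta}_h^i=\mmh^{i+1}-\mmh^i$ rather than through the unnormalized update $\mmh^i+k\vvh^i$: the residual quadratic term then only sees $\norm[\LL^2(\Omega)]{\mmh^{i+1}-\mmh^i}\lesssim k\norm[\LL^2(\Omega)]{\vvh^i}$, so a single inverse estimate absorbing the curl produces precisely the $h^{-1}k$ perturbation of the damping coefficient claimed in the statement. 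By contrast, a naïve expansion that isolates the projection error $\boldsymbol{e}_h^i$ inside a curl would spend an extra inverse-estimate power (because $\boldsymbol{e}_h^i$ is only quadratically small nodally but must then be measured in a higher $L^p$-norm), forcing an artificially restrictive mesh condition; all subtleties in the estimates above are devoted to avoiding this.
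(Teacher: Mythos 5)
Your proposal is correct and follows essentially the same route as the paper: test with $\pphih=\vvh^i$, use the angle condition and \eqref{eq:nodalProjectionEnergy} for the exchange part, telescope the DMI energy through the projected iterates, and absorb the residual curl terms via the inverse estimate \eqref{eq:inverse} and the geometric estimates \eqref{eq:geoEstCor} into an $h^{-1}k^2\norm[\LL^2(\Omega)]{\vvh^i}^2$ perturbation of the damping coefficient; your three residual terms are an algebraically equivalent regrouping of the paper's three (the paper writes $\inner{\curl(\mmh^{i+1}-\mmh^i-k\vvh^i)}{\mmh^{i+1}}+\inner{\curl\mmh^i}{\mmh^{i+1}-\mmh^i-k\vvh^i}+k\inner{\curl\vvh^i}{\mmh^{i+1}-\mmh^i}$, estimated with the same tools). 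The stability step, adding $\norm[\LL^2(\Omega)]{\mmh^j}^2\leq\abs{\Omega}$ and invoking \eqref{eq:energyNormEquivalence} with $C'=\alpha/(2C)$, also matches the paper.
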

\begin{proof}
For any $0 \leq i \leq j-1$, we test~\eqref{eq:tps1} with $\pphih=\vvh^i \in \Kh(\mmh^i)$ to obtain the identity
\begin{equation*}
\begin{split}
& \alpha \norm[\LL^2(\Omega)]{\vvh^i}^2
+ \lex^2 \theta k \norm[\LL^2(\Omega)]{\Grad\vvh^i}^2 \\
& \quad = - \lex^2 \inner{\Grad\mmh^i}{\Grad\vvh^i}
- \frac{\ldm}{2} \inner{\curl\mmh^i}{\vvh^i}
- \frac{\ldm}{2} \inner{\curl\vvh^i}{\mmh^i}.
\end{split}
\end{equation*}
Since the angle condition~\eqref{eq:angleCondition} is satisfied, we obtain that
\begin{subequations} \label{eq:tps1:temp:energy}
\begin{equation}
\begin{split}
\frac{\lex^2}{2} \norm[\LL^2(\Omega)]{\Grad\mmh^{i+1}}^2
& \stackrel{\eqref{eq:nodalProjectionEnergy}}{\leq} \frac{\lex^2}{2} \norm[\LL^2(\Omega)]{\Grad\mmh^i + k \Grad\vvh^i}^2 \\
& \stackrel{\phantom{\eqref{eq:nodalProjectionEnergy}}}{=} \frac{\lex^2}{2} \norm[\LL^2(\Omega)]{\Grad\mmh^i}^2
+ \lex^2 k \inner{\Grad\mmh^i}{\Grad\vvh^i}
+ \frac{\lex^2}{2} k^2 \norm[\LL^2(\Omega)]{\Grad\vvh^i}^2.
\end{split}
\end{equation}
Hence, it follows that
\begin{equation}
\begin{split}
& \frac{\lex^2}{2} \norm[\LL^2(\Omega)]{\Grad\mmh^{i+1}}^2
- \frac{\lex^2}{2} \norm[\LL^2(\Omega)]{\Grad\mmh^i}^2
+ \alpha k \norm[\LL^2(\Omega)]{\vvh^i}^2
+ \lex^2 (\theta - 1/2) k^2 \norm[\LL^2(\Omega)]{\Grad\vvh^i}^2 \\
& \quad \leq - \frac{\ldm}{2} k \inner{\curl\mmh^i}{\vvh^i}
- \frac{\ldm}{2} k \inner{\curl\vvh^i}{\mmh^i}.
\end{split}
\end{equation}
We obtain the energy inequality
\begin{equation}
\begin{split}
& \E(\mmh^{i+1}) - \E(\mmh^i)
+ \alpha k \norm[\LL^2(\Omega)]{\vvh^i}^2
+ \lex^2 (\theta - 1/2) k^2 \norm[\LL^2(\Omega)]{\Grad\vvh^i}^2 \\
& \quad \leq - \frac{\ldm}{2} k \inner{\curl\mmh^i}{\vvh^i}
- \frac{\ldm}{2} k \inner{\curl\vvh^i}{\mmh^i} \\
& \qquad + \frac{\ldm}{2} \inner{\curl\mmh^{i+1}}{\mmh^{i+1}}
- \frac{\ldm}{2} \inner{\curl\mmh^i}{\mmh^i}.
\end{split}
\end{equation}
\end{subequations}
With some simple algebraic manipulations, we rewrite the last four terms of the right-hand side (those which involve the curl operator) as
\begin{equation} \label{eq:tps2:temp:energy}
\begin{split}
& \inner{\curl\mmh^{i+1}}{\mmh^{i+1}}
- \inner{\curl\mmh^i}{\mmh^i}
- k \inner{\curl\mmh^i}{\vvh^i}
- k \inner{\curl\vvh^i}{\mmh^i} \\
& \quad = \inner{\curl(\mmh^{i+1}-\mmh^i - k \vvh^i)}{\mmh^{i+1}}
+ \inner{\curl\mmh^i}{\mmh^{i+1} - \mmh^i - k \vvh^i} \\
& \qquad + k \inner{\curl\vvh^i}{\mmh^{i+1} - \mmh^i}.
\end{split}
\end{equation}
Using the inverse estimate~\eqref{eq:inverse} and the geometric estimates~\eqref{eq:geoEstCor}, we infer that
\begin{equation*}
\begin{split}
\lvert \inner{\curl(\mmh^{i+1}-\mmh^i - k \vvh^i)}{\mmh^{i+1}} \rvert
& \leq \norm[\LL^1(\Omega)]{\curl(\mmh^{i+1}-\mmh^i - k \vvh^i)} \norm[\LL^{\infty}(\Omega)]{\mmh^{i+1}} \\
& \leq \sqrt{2} \, \Cinv h^{-1} \norm[\LL^1(\Omega)]{\mmh^{i+1}-\mmh^i - k \vvh^i} \\
& \leq \sqrt{2} \, \Cinv \Cgeo h^{-1} k^2 \norm[\LL^2(\Omega)]{\vvh^i}^2.
\end{split}
\end{equation*}
Similarly, it holds that
\begin{equation*}
\begin{split}
\lvert \inner{\curl\mmh^i}{\mmh^{i+1} - \mmh^i - k \vvh^i} \rvert
& \leq \norm[\LL^{\infty}(\Omega)]{\curl\mmh^i} \norm[\LL^1(\Omega)]{\mmh^{i+1} - \mmh^i - k \vvh^i} \\
& \leq \sqrt{2} \, \Cinv \Cgeo h^{-1} k^2 \norm[\LL^2(\Omega)]{\vvh^i}^2
\end{split}
\end{equation*}
and
\begin{equation*}
\begin{split}
k \lvert \inner{\curl\vvh^i}{\mmh^{i+1} - \mmh^i} \rvert
& \leq k \norm[\LL^2(\Omega)]{\curl\vvh^i} \norm[\LL^2(\Omega)]{\mmh^{i+1} - \mmh^i} \\
& \leq \sqrt{2} \, \Cinv \Cgeo h^{-1} k^2 \norm[\LL^2(\Omega)]{\vvh^i}^2.
\end{split}
\end{equation*}
With $C= 3 \, \Cinv \Cgeo \ldm/\sqrt{2}$, we thus obtain that
\begin{equation*}
\E(\mmh^{i+1}) - \E(\mmh^i)
+ \alpha k \norm[\LL^2(\Omega)]{\vvh^i}^2
+ \lex^2 (\theta - 1/2) k^2 \norm[\LL^2(\Omega)]{\Grad\vvh^i}^2
\leq C h^{-1} k^2 \norm[\LL^2(\Omega)]{\vvh^i}^2.
\end{equation*}
Summation over $0 \leq i \leq j-1$ leads to~\eqref{eq:tps1:energy}.
\par
To show~\eqref{eq:tps1:stability}, we first note that $\norm[\LL^{\infty}(\Omega)]{\mmh^j}=1$ yields that
\begin{equation} \label{eq:tps1:L2bound}
\norm[\LL^2(\Omega)]{\mmh^j}^2 \leq \abs{\Omega}.
\end{equation}
We multiply~\eqref{eq:tps1:L2bound} by $\ldm^2 / \lex^2$ and add the resulting equation to~\eqref{eq:tps1:energy}.
Using the characterization~\eqref{eq:energyNormEquivalence} of the energy, we obtain that
\begin{equation*}
\begin{split}
& \frac{\lex^2}{4} \norm[\LL^2(\Omega)]{\Grad\mmh^j}^2
+ \frac{\ldm^2}{2 \lex^2} \norm[\LL^2(\Omega)]{\mmh^j}^2
+ \left(\alpha - C h^{-1} k \right) k \sum_{i=0}^{j-1} \norm[\LL^2(\Omega)]{\vvh^i}^2 \\
& \quad + \lex^2 (\theta - 1/2) k^2 \sum_{i=0}^{j-1} \norm[\LL^2(\Omega)]{\Grad\vvh^i}^2
\leq \frac{\lex^2 + \ldm^2}{2} \norm[\LL^2(\Omega)]{\Grad\mmh^0}^2
+ \frac{1}{4} \norm[\LL^2(\Omega)]{\mmh^0}^2
+ \frac{\ldm^2}{\lex^2} \abs{\Omega}.
\end{split}
\end{equation*}
Let $C' = \alpha/(2C)$ and $k \leq C' h$.
Since $\theta \geq 1/2$, all terms on the left-hand side are nonnegative.
We obtain~\eqref{eq:tps1:stability}, where the constant $C''>0$ (which we do not compute explicitly) depends only on $\alpha$, $C_0$, $\kappa$, $\lex$, $\ldm$, and $\abs{\Omega}$.
\end{proof}
In the following proposition, we prove the corresponding result for PF-TPS1.
\begin{proposition}[Discrete energy law and stability of PF-TPS1] \label{prop:pftps1:energy}
Let $1 \leq j \leq N$ and $1/2 < \theta \leq 1$.
The iterates of PF-TPS1 satisfy the discrete energy law
\begin{equation} \label{eq:pftps1:energy}
\begin{split}
& \E(\mmh^j)
+ \alpha k \sum_{i=0}^{j-1} \norm[\LL^2(\Omega)]{\vvh^i}^2
+ \lex^2 (\theta - 1/2) k^2 \sum_{i=0}^{j-1} \norm[\LL^2(\Omega)]{\Grad\vvh^i}^2 \\
& \quad = \E(\mmh^0)
+ \frac{\ldm}{2} k^2 \sum_{i=0}^{j-1} \inner{\curl\vvh^i}{\vvh^i}.
\end{split}
\end{equation}
Moreover, there exists a threshold time-step size $k_0>0$, which depends only on $\alpha$, $\kappa$, $\lex$, $\ldm$, and $\theta$, such that, if $h \leq h_0$ and $k \leq k_0$, the iterates of PF-TPS1 satisfy the stability estimate
\begin{equation} \label{eq:pftps1:stability}
\norm[\HH^1(\Omega)]{\mmh^j}^2
+ k \sum_{i=0}^{j-1} \norm[\LL^2(\Omega)]{\vvh^i}^2
+ k^2 \sum_{i=0}^{j-1} \norm[\LL^2(\Omega)]{\Grad\vvh^i}^2
\leq C,
\end{equation}
where the constant $C>0$ depends only on $\alpha$, $C_0$, $\kappa$, $\lex$, $\ldm$, and $\theta$.
\end{proposition}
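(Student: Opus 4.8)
The plan is to follow the scheme of the proof of Proposition~\ref{prop:tps1:energy}, exploiting that for PF-TPS1 the update is \emph{affine}, $\mmh^{i+1} = \mmh^i + k\vvh^i$, so that $\Grad\mmh^{i+1} = \Grad\mmh^i + k\Grad\vvh^i$ \emph{identically}; hence the inequality \eqref{eq:nodalProjectionEnergy} used for TPS1 becomes an equality and no angle condition is needed. First I would expand the energy of the new iterate: bilinearity of $\inner{\cdot}{\cdot}$ and of $\inner{\curl\cdot}{\cdot}$ gives
\begin{equation*}
\begin{split}
\E(\mmh^{i+1}) - \E(\mmh^i)
& = \lex^2 k \inner{\Grad\mmh^i}{\Grad\vvh^i} + \frac{\lex^2}{2}k^2\norm[\LL^2(\Omega)]{\Grad\vvh^i}^2 \\
& \quad + \frac{\ldm}{2}\Big[k\inner{\curl\mmh^i}{\vvh^i} + k\inner{\curl\vvh^i}{\mmh^i} + k^2\inner{\curl\vvh^i}{\vvh^i}\Big].
\end{split}
\end{equation*}
Next I would test \eqref{eq:pftps1} with $\pphih = \vvh^i \in \Kh(\mmh^i)$, multiply by $k$, and use the resulting identity to substitute for $\lex^2 k\inner{\Grad\mmh^i}{\Grad\vvh^i}$. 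The four linear DMI cross terms then cancel in pairs (using $\inner{\curl\vvh^i}{\mmh^i} = \inner{\mmh^i}{\curl\vvh^i}$), leaving
\begin{equation*}
\E(\mmh^{i+1}) - \E(\mmh^i) + \alpha k \norm[\LL^2(\Omega)]{\vvh^i}^2 + \lex^2(\theta - 1/2)k^2\norm[\LL^2(\Omega)]{\Grad\vvh^i}^2 = \frac{\ldm}{2}k^2\inner{\curl\vvh^i}{\vvh^i}.
\end{equation*}
Summation over $0 \leq i \leq j-1$ yields the discrete energy law \eqref{eq:pftps1:energy}.

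For the stability estimate I would bound the remainder on the right-hand side of \eqref{eq:pftps1:energy} using $\norm[\LL^2(\Omega)]{\curl\vvh^i} \leq \sqrt{2}\norm[\LL^2(\Omega)]{\Grad\vvh^i}$ together with the weighted Young inequality \eqref{eq:young}: choosing the Young parameter in terms of $\lex,\ldm,\theta$ only, the term $\lex^2(\theta-1/2)k^2\sum\norm[\LL^2(\Omega)]{\Grad\vvh^i}^2$ absorbs half of it --- this is precisely where $\theta > 1/2$ is needed, to create a positive gradient term --- and the remaining contribution has the form $c\,k\cdot k\sum\norm[\LL^2(\Omega)]{\vvh^i}^2$ with $c = c(\lex,\ldm,\theta)$, which is absorbed into $\alpha k\sum\norm[\LL^2(\Omega)]{\vvh^i}^2$ once $k$ is below a first threshold. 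This produces
\begin{equation*}
\E(\mmh^j) + \frac{\alpha}{2}k\sum_{i=0}^{j-1}\norm[\LL^2(\Omega)]{\vvh^i}^2 + \frac{\lex^2(\theta-1/2)}{2}k^2\sum_{i=0}^{j-1}\norm[\LL^2(\Omega)]{\Grad\vvh^i}^2 \leq \E(\mmh^0).
\end{equation*}

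I would then invoke the lower bound $\E(\mmh^j) \geq \frac{\lex^2}{4}\norm[\LL^2(\Omega)]{\Grad\mmh^j}^2 - \frac{\ldm^2}{2\lex^2}\norm[\LL^2(\Omega)]{\mmh^j}^2$ from \eqref{eq:energyNormEquivalence}. The negative $\LL^2$ term is the genuinely new difficulty relative to TPS1, since $\mmh^j$ is no longer of unit length at the nodes; I would control it through the recursive bound \eqref{eq:pftps1:L2bound}, which gives $\norm[\LL^2(\Omega)]{\mmh^j}^2 \leq \Cnorm^4\big(\norm[\LL^2(\Omega)]{\mmh^0}^2 + k^2\sum_{i=0}^{j-1}\norm[\LL^2(\Omega)]{\vvh^i}^2\big)$. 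Moving this to the left and absorbing the $k^2\sum\norm[\LL^2(\Omega)]{\vvh^i}^2$ part into $\frac{\alpha}{2}k\sum\norm[\LL^2(\Omega)]{\vvh^i}^2$ for $k$ below a second threshold (now also depending on $\kappa$ through $\Cnorm$) leaves all three desired quantities on the left, controlled by $\E(\mmh^0) + C(\kappa,\lex,\ldm)\norm[\LL^2(\Omega)]{\mmh^0}^2$. By the upper bound in \eqref{eq:energyNormEquivalence} and $\norm[\HH^1(\Omega)]{\mmh^0} \leq C_0$ (valid for $h \leq h_0$), this is $\lesssim C_0^2$; finally, feeding the resulting bound on $k^2\sum\norm[\LL^2(\Omega)]{\vvh^i}^2$ back into \eqref{eq:pftps1:L2bound} controls $\norm[\LL^2(\Omega)]{\mmh^j}$, and collecting everything gives \eqref{eq:pftps1:stability} with a constant depending only on $\alpha, C_0, \kappa, \lex, \ldm, \theta$. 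The main obstacle is orchestrating the two absorption steps consistently: $\theta > 1/2$ is needed to make room for the curl--identity remainder in the energy law, and smallness of $k$ is then needed twice --- once for that remainder and once for the non-unit-length correction \eqref{eq:pftps1:L2bound} --- which together determine the threshold $k_0$.
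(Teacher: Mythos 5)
Your proposal is correct and follows essentially the same route as the paper: the affine update turns the TPS1 energy computation into an identity without the angle condition, the DMI cross terms cancel after testing with $\vvh^i$ to leave exactly the quadratic remainder $\frac{\ldm}{2}k^2\inner{\curl\vvh^i}{\vvh^i}$, and stability follows from Young with parameter $\eps=\lex^2(\theta-1/2)/\ldm$, the energy equivalence \eqref{eq:energyNormEquivalence}, and the recursion \eqref{eq:pftps1:L2bound} to control the non-unit-length $\LL^2$ term. The only cosmetic difference is that you absorb in two stages with two thresholds, whereas the paper carries both perturbations of the coefficient of $\alpha k\sum_i\norm[\LL^2(\Omega)]{\vvh^i}^2$ to the end and reads off a single explicit $k_0=\alpha\lex^2(2\theta-1)/\{\ldm^2[1+2\Cnorm^4(2\theta-1)]\}$.
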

\begin{proof}
Let $0 \leq i \leq j-1$.
We follow the argument of the proof of Proposition~\ref{prop:tps1:energy}:
Due to the linear time-stepping of PF-TPS1, all the computations in~\eqref{eq:tps1:temp:energy} hold with equality sign and without resorting to the angle condition~\eqref{eq:angleCondition}.
Moreover, all but the last term on the right-hand side of~\eqref{eq:tps2:temp:energy} vanish.
As a result, we obtain the energy identity
\begin{equation*}
\E(\mmh^{i+1}) - \E(\mmh^i)
+ \alpha k \norm[\LL^2(\Omega)]{\vvh^i}^2
+ \lex^2 (\theta - 1/2) k^2 \norm[\LL^2(\Omega)]{\Grad\vvh^i}^2
= \frac{\ldm}{2} k^2 \inner{\curl\vvh^i}{\vvh^i}.
\end{equation*}
Summing this identity over $0 \leq i \leq j-1$, we prove~\eqref{eq:pftps1:energy}.
To estimate the right-hand side, we apply the weighted Young inequality~\eqref{eq:young}, which, for any $\eps>0$, yields that
\begin{equation*}
\inner{\curl\vvh^i}{\vvh^i}
\leq \sqrt{2} \norm[\LL^2(\Omega)]{\Grad\vvh^i} \norm[\LL^2(\Omega)]{\vvh^i}
\leq \eps \norm[\LL^2(\Omega)]{\Grad\vvh^i}^2 + \frac{1}{2\eps} \norm[\LL^2(\Omega)]{\vvh^i}^2.
\end{equation*}
Choosing $\eps=\lex^2(\theta-1/2)/\ldm$, the characterization~\eqref{eq:energyNormEquivalence} of the energy shows that
\begin{equation} \label{eq:pftps1:temp:energy}
\begin{split}
& \frac{\lex^2}{4} \norm[\LL^2(\Omega)]{\Grad\mmh^j}^2
- \frac{\ldm^2}{2 \lex^2} \norm[\LL^2(\Omega)]{\mmh^j}^2
+ \left( \alpha - \frac{\ldm^2}{2 \lex^2(2\theta-1)}k \right) k \sum_{i=0}^{j-1} \norm[\LL^2(\Omega)]{\vvh^i}^2 \\
& \quad + \frac{\lex^2}{2} (\theta - 1/2) k^2 \sum_{i=0}^{j-1} \norm[\LL^2(\Omega)]{\Grad\vvh^i}^2
\leq \frac{\lex^2 + \ldm^2}{2} \norm[\LL^2(\Omega)]{\Grad\mmh^0}^2
+ \frac{1}{4} \norm[\LL^2(\Omega)]{\mmh^0}^2.
\end{split}
\end{equation}
We multiply~\eqref{eq:pftps1:L2bound} by $\Cnorm^4 \ldm^2 / \lex^2$ and add the resulting equation to~\eqref{eq:pftps1:temp:energy} to obtain that
\begin{equation*}
\begin{split}
& \frac{\lex^2}{4} \norm[\LL^2(\Omega)]{\Grad\mmh^j}^2
+ \frac{\ldm^2}{2 \lex^2} \norm[\LL^2(\Omega)]{\mmh^j}^2
+ \left( \alpha - \frac{\ldm^2 [1 + 2 \Cnorm^4 (2 \theta-1)]}{2 \lex^2(2\theta-1)} k \right) k \sum_{i=0}^{j-1} \norm[\LL^2(\Omega)]{\vvh^i}^2 \\
& \quad + \frac{\lex^2}{2} (\theta - 1/2) k^2 \sum_{i=0}^{j-1} \norm[\LL^2(\Omega)]{\Grad\vvh^i}^2 \\
& \qquad \leq \frac{\lex^2 + \ldm^2}{2} \norm[\LL^2(\Omega)]{\Grad\mmh^0}^2
+ \frac{\lex^2 + 4\Cnorm^4 \ldm^2}{4 \lex^2} \norm[\LL^2(\Omega)]{\mmh^0}^2.
\end{split}
\end{equation*}
If $k \leq k_0 = \alpha \, \lex^2(2 \theta-1)/\{\ldm^2 [1 + 2 \Cnorm^4 (2 \theta-1)]\}$, then all terms on the left-hand side are nonnegative.
This leads to~\eqref{eq:pftps1:stability}, where the (explicitly computable) constant $C>0$ depends only on $\alpha$, $C_0$, $\kappa$, $\lex$, $\ldm$, and $\theta$.
\end{proof}
Finally, in the following proposition, we prove the stability result for TPS2.
\begin{proposition}[Discrete energy law and stability of TPS2] \label{prop:tps2:energy}
Let $1 \leq j \leq N$.
Suppose that the time-step size $k$ is sufficiently small, so that~\eqref{eq:tps2} is well-posed by Proposition~\ref{prop:wellposed}.
Then, there exists a constant $C>0$, which depends only on $\kappa$ and $\ldm$, such that the iterates of TPS2 satisfy the discrete energy law
\begin{equation} \label{eq:tps2:energy}
\begin{split}
& \E(\mmh^j)
+ k \sum_{i=0}^{j-1} \inner{W_{M(k)}(\lambda_h^i)\vvh^i}{\vvh^i}
+ \frac{\lex^2}{2} \rho(k) k^2 \sum_{i=0}^{j-1} \norm[\LL^2(\Omega)]{\Grad\vvh^i}^2 \\
& \quad \leq \E(\mmh^0) + C h^{-1} k^2 \sum_{i=0}^{j-1} \norm[\LL^2(\Omega)]{\vvh^i}^2.
\end{split}
\end{equation}
Moreover, there exists a threshold time-step size $k_0>0$, which depends only on $\alpha$, $\lex$, and $\ldm$, and a constant $C'>0$, which depends only on $\alpha$, $\kappa$, and $\ldm$, such that, if $h \leq h_0$, $k \leq k_0$, and $k \leq C'h$, the iterates of TPS2 satisfy the stability estimate
\begin{equation} \label{eq:tps2:stability}
\norm[\HH^1(\Omega)]{\mmh^j}^2
+ k \sum_{i=0}^{j-1} \norm[\LL^2(\Omega)]{\vvh^i}^2
+ \rho(k) k^2 \sum_{i=0}^{j-1} \norm[\LL^2(\Omega)]{\Grad\vvh^i}^2
\leq C'',
\end{equation}
where the constant $C''>0$ depends only on $\alpha$, $C_0$, $\kappa$, $\lex$, $\ldm$, and $\abs{\Omega}$.
\end{proposition}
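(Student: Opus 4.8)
The plan is to follow the proofs of Proposition~\ref{prop:tps1:energy} and Proposition~\ref{prop:pftps1:energy} almost verbatim, carefully tracking the two features proper to TPS2: the weighted mass term $\inner{W_{M(k)}(\lambda_h^i)\vvh^i}{\vvh^i}$ and the stabilization factor $[1+\rho(k)]$. First I would test~\eqref{eq:tps2} with $\pphih = \vvh^i \in \Kh(\mmh^i)$; the term $\inner{\mmh^i\times\vvh^i}{\vvh^i}$ vanishes, and by symmetry of the $\LL^2$-scalar product the two $\ldm$-terms on the left combine into $\frac{\ldm}{2}k\inner{\curl\vvh^i}{\vvh^i}$. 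Multiplying by $k$ and inserting the nodal-projection energy bound~\eqref{eq:nodalProjectionEnergy} (licit since the angle condition~\eqref{eq:angleCondition} is assumed), written as $\tfrac{\lex^2}{2}\norm[\LL^2(\Omega)]{\Grad\mmh^{i+1}}^2 \le \tfrac{\lex^2}{2}\norm[\LL^2(\Omega)]{\Grad\mmh^i}^2 + \lex^2 k\inner{\Grad\mmh^i}{\Grad\vvh^i} + \tfrac{\lex^2}{2}k^2\norm[\LL^2(\Omega)]{\Grad\vvh^i}^2$, the $\tfrac{\lex^2}{2}k^2\norm[\LL^2(\Omega)]{\Grad\vvh^i}^2$ contribution cancels the unit part of the factor $[1+\rho(k)]$ and leaves exactly the stabilization term $\tfrac{\lex^2}{2}\rho(k)k^2\norm[\LL^2(\Omega)]{\Grad\vvh^i}^2$ of~\eqref{eq:tps2:energy}. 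Regrouping the exchange and DMI pieces into $\E(\mmh^{i+1})-\E(\mmh^i)$, this yields a per-step inequality $\E(\mmh^{i+1}) - \E(\mmh^i) + k\inner{W_{M(k)}(\lambda_h^i)\vvh^i}{\vvh^i} + \tfrac{\lex^2}{2}\rho(k)k^2\norm[\LL^2(\Omega)]{\Grad\vvh^i}^2 \le \tfrac{\ldm}{2}R^i$, where $R^i$ collects the remaining curl terms.

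The second step is to control $R^i$ exactly as in~\eqref{eq:tps2:temp:energy}. Setting $\vec{r}^i := \mmh^{i+1}-\mmh^i-k\vvh^i$, the same algebraic identity used for TPS1 shows that the curl terms telescope into $R^i = \inner{\curl\vec{r}^i}{\mmh^{i+1}} + \inner{\curl\mmh^i}{\vec{r}^i} + k\inner{\curl\vvh^i}{\vec{r}^i}$; the additional term $\tfrac{\ldm}{2}k^2\inner{\curl\vvh^i}{\vvh^i}$ coming from the left-hand side of~\eqref{eq:tps2} is precisely what converts the TPS1 contribution $k\inner{\curl\vvh^i}{\mmh^{i+1}-\mmh^i}$ into $k\inner{\curl\vvh^i}{\vec{r}^i}$. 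Each summand is then $\lesssim h^{-1}k^2\norm[\LL^2(\Omega)]{\vvh^i}^2$: for the first two I use $\norm[\LL^{\infty}(\Omega)]{\mmh^{i+1}}=\norm[\LL^{\infty}(\Omega)]{\mmh^i}=1$ (nodal projection), the inverse estimate~\eqref{eq:inverse} to pass to $h^{-1}$, and~\eqref{eq:geoEstCor2} with $p=1$ to bound $\norm[\LL^1(\Omega)]{\vec{r}^i}$; for the last one I split $\inner{\curl\vvh^i}{\vec{r}^i} = \inner{\curl\vvh^i}{\mmh^{i+1}-\mmh^i} - k\inner{\curl\vvh^i}{\vvh^i}$ and apply~\eqref{eq:geoEstCor1} and~\eqref{eq:inverse}, respectively. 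This gives $\tfrac{\ldm}{2}\abs{R^i} \le Ch^{-1}k^2\norm[\LL^2(\Omega)]{\vvh^i}^2$ with $C$ depending only on $\kappa$ and $\ldm$, and summation over $0 \le i \le j-1$ yields the discrete energy law~\eqref{eq:tps2:energy}.

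For the stability estimate~\eqref{eq:tps2:stability} I would argue as in Proposition~\ref{prop:tps1:energy}. By the first bound in~\eqref{eq:propertyCutOff}, $k\inner{W_{M(k)}(\lambda_h^i)\vvh^i}{\vvh^i} \ge \tfrac{2\alpha^2}{2\alpha+M(k)k}k\norm[\LL^2(\Omega)]{\vvh^i}^2$, and since $M(k)k \to 0$ by~\eqref{eq:propertyOfM}, there is a threshold $k_0>0$ — depending only on $\alpha$, and taken no larger than the well-posedness threshold of Proposition~\ref{prop:wellposed}, hence also on $\lex$ and $\ldm$ — such that $k\le k_0$ makes this lower bound at least $\tfrac{\alpha}{2}k\norm[\LL^2(\Omega)]{\vvh^i}^2$. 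Substituting into~\eqref{eq:tps2:energy} and imposing the CFL condition $k\le C'h$ with $C':=\alpha/(4C)$, the term $Ch^{-1}k^2\sum_i\norm[\LL^2(\Omega)]{\vvh^i}^2$ is absorbed into the left, leaving $\E(\mmh^j) + \tfrac{\alpha}{4}k\sum_{i=0}^{j-1}\norm[\LL^2(\Omega)]{\vvh^i}^2 + \tfrac{\lex^2}{2}\rho(k)k^2\sum_{i=0}^{j-1}\norm[\LL^2(\Omega)]{\Grad\vvh^i}^2 \le \E(\mmh^0)$. Finally, as for TPS1, I add $\tfrac{\ldm^2}{\lex^2}\norm[\LL^2(\Omega)]{\mmh^j}^2 \le \tfrac{\ldm^2}{\lex^2}\abs{\Omega}$ (valid since $\norm[\LL^{\infty}(\Omega)]{\mmh^j}=1$), bound $\norm[\HH^1(\Omega)]{\mmh^j}^2$ from above by the left-hand side via the energy--norm equivalence~\eqref{eq:energyNormEquivalence}, and estimate $\E(\mmh^0)$ from above using the other half of~\eqref{eq:energyNormEquivalence} together with $\norm[\HH^1(\Omega)]{\mmh^0}\le C_0$; this produces~\eqref{eq:tps2:stability} with $C''$ depending only on $\alpha$, $C_0$, $\kappa$, $\lex$, $\ldm$, and $\abs{\Omega}$.

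Most of this is bookkeeping, but two points deserve care. The first, which I expect to be the main obstacle to a clean write-up, is verifying that the additional curl term $\tfrac{\ldm}{2}k^2\inner{\curl\vvh^i}{\vvh^i}$ produced by the second-order corrector stays of the harmless order $h^{-1}k^2\norm[\LL^2(\Omega)]{\vvh^i}^2$; this works precisely because it recombines into the residual $\vec{r}^i$ and because the inverse estimate~\eqref{eq:inverse} trades the uncontrolled $\norm[\LL^2(\Omega)]{\Grad\vvh^i}$ for $h^{-1}\norm[\LL^2(\Omega)]{\vvh^i}$ — the very mechanism that forces the CFL condition $k/h\to 0$. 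The second is that the coercivity constant in~\eqref{eq:propertyCutOff} degenerates only like $2\alpha^2/(2\alpha+M(k)k)$, so that~\eqref{eq:propertyOfM} guarantees an $\alpha$-proportional coercivity in $\vvh^i$ for small $k$, which is exactly what makes the absorption in the stability step possible.
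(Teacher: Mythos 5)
Your proposal is correct and follows essentially the same route as the paper: the same testing with $\pphih=\vvh^i$, the same use of~\eqref{eq:nodalProjectionEnergy} to absorb the unit part of $[1+\rho(k)]$, the same reformulation of the curl terms via the residual $\mmh^{i+1}-\mmh^i-k\vvh^i$ (your grouping $k\inner{\curl\vvh^i}{\vec{r}^i}$ is just the paper's last two summands merged and then re-split), and the same absorption argument with $W_{M(k)}\geq\alpha/2$ and $C'=\alpha/(4C)$ for the stability estimate. No gaps.
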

\begin{proof}
Let $0 \leq i \leq j-1$.
We follow step by step the argument of the proof of Proposition~\ref{prop:tps1:energy} to obtain the inequality
\begin{equation*}
\begin{split}
& \E(\mmh^{i+1}) - \E(\mmh^i)
+ k \inner{W_{M(k)}(\lambda_h^i)\vvh^i}{\vvh^i}
+ \frac{\lex^2}{2} \rho(k) k^2 \norm[\LL^2(\Omega)]{\Grad\vvh^i}^2 \\
& \quad \leq 
- \frac{\ldm}{2} k^2 \inner{\curl\vvh^i}{\vvh^i}
- \frac{\ldm}{2} k \inner{\curl\mmh^i}{\vvh^i}
- \frac{\ldm}{2} k \inner{\mmh^i}{\curl\vvh^i} \\
& \qquad + \frac{\ldm}{2} \inner{\curl\mmh^{i+1}}{\mmh^{i+1}}
- \frac{\ldm}{2} \inner{\curl\mmh^i}{\mmh^i}.
\end{split}
\end{equation*}
We reformulate the terms of the right-hand side which involve the curl operator, i.e.,
\begin{equation*}
\begin{split}
& \inner{\curl\mmh^{i+1}}{\mmh^{i+1}}
- \inner{\curl\mmh^i}{\mmh^i + k \vvh^i}
- k \inner{\curl\vvh^i}{\mmh^i}
- k^2 \inner{\curl\vvh^i}{\vvh^i} \\
& \quad = \inner{\curl(\mmh^{i+1}-\mmh^i - k \vvh^i)}{\mmh^{i+1}}
+ \inner{\curl\mmh^i}{\mmh^{i+1} - \mmh^i - k \vvh^i} \\
& \qquad + k \inner{\curl\vvh^i}{\mmh^{i+1} - \mmh^i}
- k^2 \inner{\curl\vvh^i}{\vvh^i}
\end{split}
\end{equation*}
and proceed with their direct estimation:
Using~\eqref{eq:inverse} and~\eqref{eq:geoEstCor}, we obtain that
\begin{align*}
\lvert \inner{\curl(\mmh^{i+1}-\mmh^i - k \vvh^i)}{\mmh^{i+1}} \rvert
& \leq \sqrt{2} \, \Cinv \Cgeo h^{-1} k^2 \norm[\LL^2(\Omega)]{\vvh^i}^2, \\
\lvert \inner{\curl\mmh^i}{\mmh^{i+1} - \mmh^i - k \vvh^i} \rvert
& \leq \sqrt{2} \,\Cinv \Cgeo h^{-1} k^2 \norm[\LL^2(\Omega)]{\vvh^i}^2, \\
k \lvert \inner{\curl\vvh^i}{\mmh^{i+1} - \mmh^i} \rvert
& \leq \sqrt{2} \,\Cinv \Cgeo h^{-1} k^2 \norm[\LL^2(\Omega)]{\vvh^i}^2, \\
k^2 \lvert \inner{\curl\vvh^i}{\vvh^i} \rvert
& \leq \sqrt{2} \,\Cinv h^{-1} k^2 \norm[\LL^2(\Omega)]{\vvh^i}^2.
\end{align*}
It follows that
\begin{equation*}
\E(\mmh^{i+1}) - \E(\mmh^i)
+ k \inner{W_{M(k)}(\lambda_h^i)\vvh^i}{\vvh^i}
+ \frac{\lex^2}{2} \rho(k) k^2 \norm[\LL^2(\Omega)]{\Grad\vvh^i}^2
\leq C h^{-1} k^2 \norm[\LL^2(\Omega)]{\vvh^i}^2,
\end{equation*}
with $C = \Cinv(3\Cgeo+1)\ldm/\sqrt{2}$.
Summation over $0 \leq i \leq j-1$ leads to~\eqref{eq:tps2:energy}.
\par
If $k$ is sufficiently small, it holds that $W_{M(k)} (\cdot) \geq \alpha/2$; see~\eqref{eq:propertyCutOff}--\eqref{eq:propertyOfM}. 
With the characterization~\eqref{eq:energyNormEquivalence} of the energy and the inequality $\norm[\LL^2(\Omega)]{\mmh^j}^2 \leq \abs{\Omega}$, we obtain that
\begin{equation*}
\begin{split}
& \frac{\lex^2}{4} \norm[\LL^2(\Omega)]{\Grad\mmh^j}^2
+ \frac{\ldm^2}{2 \lex^2} \norm[\LL^2(\Omega)]{\mmh^j}^2
+ \frac{\alpha - 2 C h^{-1} k}{2} k \sum_{i=0}^{j-1} \norm[\LL^2(\Omega)]{\vvh^i}^2 \\
& \quad + \frac{\lex^2}{2} \rho(k) k^2 \sum_{i=0}^{j-1} \norm[\LL^2(\Omega)]{\Grad\vvh^i}^2
\leq \frac{\lex^2 + \ldm^2}{2} \norm[\LL^2(\Omega)]{\Grad\mmh^0}^2
+ \frac{1}{4} \norm[\LL^2(\Omega)]{\mmh^0}^2
+ \frac{\ldm^2}{\lex^2} \abs{\Omega}.
\end{split}
\end{equation*}
Let $C'= \alpha/(4C)$.
If $k \leq C' h$, then all terms on the left-hand side are nonnegative.
Hence, we obtain~\eqref{eq:tps2:stability}, where the constant $C''>0$ (which we do not compute explicitly) depends only on $\alpha$, $C_0$, $\kappa$, $\lex$, $\ldm$, and $\abs{\Omega}$.
\end{proof}
\subsection{Extraction of weakly convergent subsequences}
Exploiting the established stability estimates of the three algorithms, we are now able to prove that the time reconstructions defined by~\eqref{eq:timeApprox} are uniformly bounded.
\begin{proposition} \label{prop:boundedness}
Suppose that the assumptions of Theorem~\ref{thm:main} are satisfied.
For any algorithm, if $h$ and $k$ are sufficiently small, the sequences $\{\mmhk \}$, $\{\mmhk^\pm \}$, and $\{\vvhk^- \}$ are uniformly bounded in the sense that
\begin{equation} \label{eq:boundedness}
\norm[L^{\infty}(0,T;\HH^1(\Omega))]{\mmhk}
+ \norm[L^{\infty}(0,T;\HH^1(\Omega))]{\mmhk^\pm}
+ \norm[\LL^2(\Omega_T)]{\mmhkt}
+ \norm[\LL^2(\Omega_T)]{\vvhk^-} \leq C.
\end{equation}
The constant $C>0$ is independent of $h$ and $k$.
Moreover, it holds that
\begin{equation} \label{eq:GradVto0}
\lim_{h,k \to 0} k \norm[\LL^2(\Omega_T)]{\Grad\vvhk^-} = 0.
\end{equation}
\end{proposition}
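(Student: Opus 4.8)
The plan is to obtain every term in~\eqref{eq:boundedness} and the limit~\eqref{eq:GradVto0} directly from the discrete stability estimates of Proposition~\ref{prop:tps1:energy}, Proposition~\ref{prop:pftps1:energy}, and Proposition~\ref{prop:tps2:energy}, simply by rewriting the discrete quantities as norms of the time reconstructions~\eqref{eq:timeApprox}. First I would fix $h,k$ small enough for the relevant stability estimate to apply: for TPS1 and TPS2 this needs the CFL condition $k \leq C' h$, which is eventually satisfied because $k/h \to 0$ by the assumptions of Theorem~\ref{thm:main}; for PF-TPS1 only $h \leq h_0$ and $k \leq k_0$ are required. Applying~\eqref{eq:tps1:stability}, \eqref{eq:pftps1:stability}, resp.\ \eqref{eq:tps2:stability} with $j = N$ then produces a constant $C>0$ independent of $h,k$ with $\max_{1 \leq j \leq N}\norm[\HH^1(\Omega)]{\mmh^j}^2 + k \sum_{i=0}^{N-1}\norm[\LL^2(\Omega)]{\vvh^i}^2 \leq C$; together with $\norm[\HH^1(\Omega)]{\mmh^0} \leq C_0$ for $h \leq h_0$ (a consequence of~\eqref{eq:convergenceMh0}), this controls $\norm[\HH^1(\Omega)]{\mmh^j}$ uniformly for all $0 \leq j \leq N$.

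From there the first three terms of~\eqref{eq:boundedness} are immediate. Since $\mmhk^\pm$ take the nodal values $\mmh^i$ on the subintervals and $\mmhk(t)$ is at each time a convex combination of $\mmh^i$ and $\mmh^{i+1}$, their $L^{\infty}(0,T;\HH^1(\Omega))$ norms are bounded by $\max_{0 \leq j \leq N}\norm[\HH^1(\Omega)]{\mmh^j} \leq C$; and by construction $\norm[\LL^2(\Omega_T)]{\vvhk^-}^2 = k \sum_{i=0}^{N-1}\norm[\LL^2(\Omega)]{\vvh^i}^2 \leq C$. For $\norm[\LL^2(\Omega_T)]{\mmhkt}$ I would use that on each $(t_i,t_{i+1})$ one has $\mmhkt = (\mmh^{i+1}-\mmh^i)/k$: for TPS1 and TPS2 the geometric estimate~\eqref{eq:geoEstCor1} with $p=2$ gives $\norm[\LL^2(\Omega)]{\mmh^{i+1}-\mmh^i} \leq \Cgeo k\norm[\LL^2(\Omega)]{\vvh^i}$, so after integrating in time $\norm[\LL^2(\Omega_T)]{\mmhkt}^2 \leq \Cgeo^2 k\sum_{i=0}^{N-1}\norm[\LL^2(\Omega)]{\vvh^i}^2 \leq C$; for PF-TPS1 the linear update $\mmh^{i+1}=\mmh^i+k\vvh^i$ makes $\mmhkt = \vvhk^-$ a.e.\ in $\Omega_T$, so the bound reduces to the one just established for $\vvhk^-$. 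This finishes~\eqref{eq:boundedness}.

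The limit~\eqref{eq:GradVto0} is the one place where the three schemes genuinely differ, and I expect it to be the only point needing care. For TPS1 and TPS2 I would combine the inverse estimate~\eqref{eq:inverse} (applied pointwise in $t$ to $\vvhk^-(t)$) with the CFL assumption: $k\norm[\LL^2(\Omega_T)]{\Grad\vvhk^-} \leq \Cinv (k/h)\norm[\LL^2(\Omega_T)]{\vvhk^-} \leq C\,k/h \to 0$. For PF-TPS1 no CFL coupling is available, so an inverse estimate would be useless; instead I would read the bound off the (unconditional) stability estimate~\eqref{eq:pftps1:stability}, which — thanks to the strict inequality $\theta > 1/2$ — still carries the term $k^2\sum_i\norm[\LL^2(\Omega)]{\Grad\vvh^i}^2$: taking $j=N$ gives $k^2\sum_{i=0}^{N-1}\norm[\LL^2(\Omega)]{\Grad\vvh^i}^2 \leq C$, hence $k^2\norm[\LL^2(\Omega_T)]{\Grad\vvhk^-}^2 = k\bigl(k^2\sum_{i=0}^{N-1}\norm[\LL^2(\Omega)]{\Grad\vvh^i}^2\bigr) \leq Ck \to 0$. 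Apart from this case distinction the whole argument is routine bookkeeping on the time reconstructions, all of the analytic substance having already been spent on the stability estimates.
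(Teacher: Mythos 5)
Your proposal is correct and follows essentially the same route as the paper: the bounds in~\eqref{eq:boundedness} are read off the stability estimates~\eqref{eq:tps1:stability}, \eqref{eq:pftps1:stability}, \eqref{eq:tps2:stability} (with the geometric estimate~\eqref{eq:geoEstCor1} handling $\mmhkt$ for the projected schemes), and~\eqref{eq:GradVto0} is obtained exactly as in the paper, via the inverse estimate and the CFL condition $k/h\to 0$ for TPS1/TPS2 and via the $k^2\sum_i\norm[\LL^2(\Omega)]{\Grad\vvh^i}^2$ term of the unconditional stability estimate for PF-TPS1.
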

\begin{proof}
The estimate~\eqref{eq:boundedness} follows directly from~\eqref{eq:tps1:stability}, \eqref{eq:pftps1:stability}, and~\eqref{eq:tps2:stability}.
For TPS1 and TPS2, also the geometric estimate~\eqref{eq:geoEstCor1} is used to conclude that $\norm[\LL^2(\Omega_T)]{\mmhkt} \leq C$.
\par
The convergence~\eqref{eq:GradVto0} for PF-TPS1 follows from Proposition~\ref{prop:pftps1:energy}.
Indeed, it holds that
\begin{equation*}
k^2 \norm[\LL^2(\Omega_T)]{\Grad\vvhk^-}^2
=  k^3 \sum_{i=0}^{N-1} \norm[\LL^2(\Omega)]{\Grad\vvh^i}^2
\stackrel{\eqref{eq:pftps1:stability}}{\leq} C k.
\end{equation*}
For TPS1 (resp.\ TPS2), we first resort to an inverse estimate to obtain that
\begin{equation*}
k^2 \norm[\LL^2(\Omega_T)]{\Grad\vvhk^-}^2
=  k^3 \sum_{i=0}^{N-1} \norm[\LL^2(\Omega)]{\Grad\vvh^i}^2
\stackrel{\eqref{eq:inverse}}{\leq} \Cinv h^{-2} k^3 \sum_{i=0}^{N-1} \norm[\LL^2(\Omega)]{\vvh^i}^2.
\end{equation*}
The result then follows from Proposition~\ref{prop:tps1:energy} (resp.\ Proposition~\ref{prop:tps2:energy}) and the fact that $k/h \to 0$ as $h,k \to 0$ by assumption.
\end{proof}
With this result, we can now extract weakly convergent subsequences.
\begin{proposition} \label{prop:convergenceSubsequences}
Suppose that the assumptions of Theorem~\ref{thm:main} are satisfied.
Then, for any algorithm, there exists $\mm\in \HH^1(\Omega_T) \cap L^{\infty}(0,T;\HH^1(\Omega))$, which satisfies $\abs{\mm}=1$ a.e.\ in $\Omega_T$, such that the sequences of time reconstructions $\{\mmhk \}$, $\{\mmhk^\pm \}$, and $\{\vvhk^- \}$ admit subsequences (not relabeled) for which it holds that
\begin{subequations} \label{eq:convergences}
\begin{align}
\label{eq:convergences1}
\mmhk \weakto \mm & \quad \text{ in } \HH^1(\Omega_T),\\
\mmhk \to \mm & \quad \text{ in } \HH^s(\Omega_T) \text{ for all } 0 < s < 1, \\
\mmhk \to \mm & \quad \text{ in } L^2(0,T;\HH^s(\Omega)) \text{ for all } 0 < s < 1, \\
\mmhk, \mmhk^\pm \to \mm & \quad \text{ in } \LL^2(\Omega_T), \\
\mmhk, \mmhk^\pm \to \mm & \quad \text{ pointwise a.e.\ in } \Omega_T, \\
\label{eq:convergences6}
\mmhk, \mmhk^\pm \weakstarto \mm & \quad \text{ in } L^{\infty}(0,T;\HH^1(\Omega)), \\
\label{eq:convergences7}
\vvhk^- \weakto \mmt & \quad \text{ in } \LL^2(\Omega_T)
\end{align}
\end{subequations}
as $h,k \to 0$.
\end{proposition}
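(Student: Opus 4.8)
The plan is to carry out the standard weak/strong compactness step of the energy method: from the uniform bounds of Proposition~\ref{prop:boundedness} I would extract a weakly convergent subsequence, upgrade it to strong convergence by Rellich-type compactness, and then identify all the limits with a single function $\mm$ satisfying $\abs{\mm}=1$. First, since~\eqref{eq:boundedness} shows that $\{\mmhk\}$ is bounded in the Hilbert space $\HH^1(\Omega_T)$, an extraction gives $\mm\in\HH^1(\Omega_T)$ and a (non-relabeled) subsequence with $\mmhk\weakto\mm$ in $\HH^1(\Omega_T)$, which is~\eqref{eq:convergences1}; in particular $\mmhkt\weakto\mmt$ in $\LL^2(\Omega_T)$. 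The compact embedding $\HH^1(\Omega_T)\hookrightarrow\HH^s(\Omega_T)$ on the bounded Lipschitz cylinder $\Omega_T$ (for every $0<s<1$) then upgrades this, along a further subsequence, to $\mmhk\to\mm$ in $\HH^s(\Omega_T)$; combining with the continuous embedding $\HH^s(\Omega_T)\hookrightarrow L^2(0,T;\HH^s(\Omega))$ --- or, alternatively, invoking an Aubin--Lions argument based on the $\LL^2(\Omega_T)$-bound on $\mmhkt$ --- yields strong convergence of $\mmhk$ in $L^2(0,T;\HH^s(\Omega))$ and in $\LL^2(\Omega_T)$.

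To pass to the piecewise constant reconstructions, I would note that for $t\in[t_i,t_{i+1})$ one has $\norm[\LL^2(\Omega)]{\mmhk(t)-\mmhk^\pm(t)}\leq\norm[\LL^2(\Omega)]{\mmh^{i+1}-\mmh^i}$, so that, using~\eqref{eq:geoEstCor1} (which holds trivially for PF-TPS1),
\begin{equation*}
\norm[\LL^2(\Omega_T)]{\mmhk^\pm-\mmhk}^2
\leq \Cgeo^2\,k^2\,k\sum_{i=0}^{N-1}\norm[\LL^2(\Omega)]{\vvh^i}^2
= \Cgeo^2\,k^2\,\norm[\LL^2(\Omega_T)]{\vvhk^-}^2,
\end{equation*}
which tends to zero as $h,k\to0$ by~\eqref{eq:boundedness}; hence $\mmhk^\pm\to\mm$ in $\LL^2(\Omega_T)$ as well. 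A further diagonal extraction gives pointwise a.e.\ convergence of $\mmhk$ and $\mmhk^\pm$ towards $\mm$ in $\Omega_T$. The weak-$\star$ convergence~\eqref{eq:convergences6} then follows from the $L^\infty(0,T;\HH^1(\Omega))$-bound in~\eqref{eq:boundedness} and Banach--Alaoglu (this space being the dual of the separable space $L^1(0,T;(\HH^1(\Omega))^\ast)$), the weak-$\star$ limit being forced to be $\mm$ by the strong $\LL^2(\Omega_T)$-convergence; this also shows $\mm\in L^\infty(0,T;\HH^1(\Omega))$.

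One point that requires a short argument is the identification $\vvhk^-\weakto\mmt$ in~\eqref{eq:convergences7}. For PF-TPS1 one simply has $\mmhkt=\vvhk^-$, so the claim follows from the first step; for TPS1 and TPS2 the nodal projection makes $\mmhkt$ and $\vvhk^-$ differ, and I would argue as follows. Let $\vv\in\LL^2(\Omega_T)$ be a weak limit of the $\LL^2(\Omega_T)$-bounded sequence $\{\vvhk^-\}$ along a further subsequence. Since $\mmhkt=k^{-1}(\mmh^{i+1}-\mmh^i)$ on $(t_i,t_{i+1})$, the geometric estimate~\eqref{eq:geoEstCor2} gives
\begin{equation*}
\norm[\LL^1(\Omega_T)]{\mmhkt-\vvhk^-}
=\sum_{i=0}^{N-1}\norm[\LL^1(\Omega)]{\mmh^{i+1}-\mmh^i-k\vvh^i}
\leq \Cgeo\,k\,k\sum_{i=0}^{N-1}\norm[\LL^2(\Omega)]{\vvh^i}^2,
\end{equation*}
which tends to zero by~\eqref{eq:boundedness}; combining this with $\mmhkt-\vvhk^-\weakto\mmt-\vv$ in $\LL^2(\Omega_T)$ forces $\vv=\mmt$ a.e.\ in $\Omega_T$.

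The second point is the constraint $\abs{\mm}=1$. Writing $\abs{\pphih}^2-\interp[\abs{\pphih}^2]$ as an elementwise polynomial of degree two vanishing at all vertices, a scaling argument gives $\norm[L^1(\Omega)]{\abs{\pphih}^2-\interp[\abs{\pphih}^2]}\lesssim h^2\norm[\LL^2(\Omega)]{\Grad\pphih}^2$. For TPS1 and TPS2 one has $\mmhk^+\in\Mh$, hence $\interp[\abs{\mmhk^+}^2]\equiv1$ and therefore $\norm[L^1(\Omega_T)]{\abs{\mmhk^+}^2-1}\lesssim h^2\norm[L^\infty(0,T;\LL^2(\Omega))]{\Grad\mmhk^+}^2\to0$ by~\eqref{eq:boundedness}. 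For PF-TPS1, the recursion~\eqref{eq:pftps1:recursive} together with~\eqref{eq:discreteNormEquivalence} gives, for $t\in[t_i,t_{i+1})$, the linear-in-$k$ decay
\begin{equation} \label{eq:ConstraintLinearDecay}
\begin{split}
\norm[L^1(\Omega)]{\interp\big[\abs{\mmhk^+(t)}^2\big]-1}
&\leq \norm[L^1(\Omega)]{\interp\big[\abs{\mmh^0}^2\big]-1}
+ C\,k^2\sum_{i=0}^{N-1}\norm[\LL^2(\Omega)]{\vvh^i}^2 \\
&\leq \norm[L^1(\Omega)]{\interp\big[\abs{\mmh^0}^2\big]-1} + C\,k,
\end{split}
\end{equation}
where the first term tends to $0$ as $h\to0$ by~\eqref{eq:convergenceMh0}, $\abs{\mm^0}=1$, and the interpolation estimate above. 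Combining~\eqref{eq:ConstraintLinearDecay} with the elementwise interpolation estimate shows $\abs{\mmhk^+}^2\to1$ in $L^1(\Omega_T)$ in all three cases, and since $\mmhk^+\to\mm$ pointwise a.e.\ we conclude $\abs{\mm}=1$ a.e.\ in $\Omega_T$. Altogether $\mm\in\HH^1(\Omega_T)\cap L^\infty(0,T;\HH^1(\Omega))$ with $\abs{\mm}=1$. I do not anticipate any essential obstacle here: this is the routine compactness step of the energy method, and the only mildly delicate points --- keeping $\vvhk^-$ consistent with $\mmhkt$ for the projected schemes and quantifying the constraint violation for the projection-free one --- are handled by the geometric estimates~\eqref{eq:geoEstCor} and the interpolation/inverse estimates of Section~\ref{sec:convergence}; care is needed only in ordering the successive subsequence extractions so that one common subsequence realizes all of~\eqref{eq:convergences} simultaneously.
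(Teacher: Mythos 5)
Your proposal is correct and follows essentially the same route as the paper: weak compactness in $\HH^1(\Omega_T)$ from Proposition~\ref{prop:boundedness}, upgrading via the compact embedding into $\HH^s(\Omega_T)\subset L^2(0,T;\HH^s(\Omega))$, transferring to $\mmhk^\pm$ through an $\bigO{k}$ bound on $\mmhk-\mmhk^\pm$, identifying $\vvhk^-$ with $\mmt$ via the geometric estimate~\eqref{eq:geoEstCor2} for the projected schemes, and establishing $\abs{\mm}=1$ through nodal-interpolation estimates plus, for PF-TPS1, the recursion~\eqref{eq:pftps1:recursive} yielding the $\bigO{k}$ constraint-violation bound. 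The only deviations are cosmetic (an $L^1$/$h^2$ interpolation estimate where the paper uses an $L^2$/$h$ one, and pointwise a.e.\ convergence in place of the paper's triangle-inequality splitting for the constraint), and both are valid.
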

\begin{proof}
For the sake of clarity, we divide the proof into three steps.
\begin{itemize}
\item \textbf{Step 1:} Proof of the convergence results~\eqref{eq:convergences1}--\eqref{eq:convergences6}.
\end{itemize}
The uniform boundedness~\eqref{eq:boundedness} established by Proposition~\ref{prop:boundedness} allows us to extract weakly convergent subsequences (not relabeled) of $\left\{\mmhk\right\}$, $\left\{\mmhk^{\pm}\right\}$, with possibly different limits, and $\left\{\vvhk^-\right\}$ in $\HH^1(\Omega_T)$,  $L^2(0,T;\HH^1(\Omega))$, and $\LL^2(\Omega_T)$, respectively.
\par
Let $\mm\in\HH^1(\Omega_T)$ denote the weak limit of $\left\{\mmhk\right\}$ in $\HH^1(\Omega_T)$.
The continuous inclusions $\HH^1(\Omega_T) \subset L^2(0,T;\HH^1(\Omega)) \subset \LL^2(\Omega_T)$ and the compact embedding $\HH^1(\Omega_T) \Subset \LL^2(\Omega_T)$ show that $\mmhk \weakto \mm$ in $L^2(0,T;\HH^1(\Omega))$ and $\mmhk \to \mm$ in $\LL^2(\Omega_T)$.
In particular, upon extraction of a further subsequence, we obtain that $\mmhk \to \mm$ pointwise a.e.\ in $\Omega_T$.
\par
Let $0 < s < 1$.
From the interpolation result $[\LL^2(\Omega_T),\HH^1(\Omega_T)]_s = \HH^s(\Omega_T)$,
we obtain the compact embedding $\HH^1(\Omega_T) \Subset \HH^s(\Omega_T)$; see, e.g., \cite[Theorem~6.4.5 and Theorem~3.8.1]{bl1976}.
Since $[\LL^2(\Omega_T),L^2(0,T;\HH^1(\Omega))]_s = L^2(0,T;\HH^s(\Omega))$, which follows, e.g., by~\cite[Theorem~5.1.2]{bl1976},
we deduce that the inclusion $\HH^s(\Omega_T) \subset L^2(0,T;\HH^s(\Omega))$ is continuous.
Hence, it holds that $\HH^1(\Omega_T) \Subset \HH^s(\Omega_T) \subset L^2(0,T;\HH^s(\Omega))$,
from which we conclude that $\mmhk \to \mm$ in both $\HH^s(\Omega_T)$ and $L^2(0,T;\HH^s(\Omega))$.
Moreover, since
\begin{equation*}
\norm[\LL^2(\Omega_T)]{\mmhk-\mmhk^{\pm}}
\stackrel{\eqref{eq:timeApprox}}{\leq} k \norm[\LL^2(\Omega_T)]{\mmhkt}
\stackrel{\eqref{eq:boundedness}}{\lesssim} k,
\end{equation*}
it follows that $\mmhk^{\pm} \weakto \mm$ in $L^2(0,T;\HH^1(\Omega))$ as well as $\mmhk^{\pm} \to \mm$ in $\LL^2(\Omega_T)$ and pointwise a.e.\ in $\Omega_T$.
Finally, since the sequences $\left\{\mmhk\right\}$ and $\left\{\mmhk^{\pm}\right\}$ are uniformly bounded also in $L^{\infty}(0,T;\HH^1(\Omega))$, we can extract further weakly-star convergent subsequences, whose limits coincide with the weak limits in $L^2(0,T;\HH^1(\Omega))$, i.e., it holds that $\mmhk,\mmhk^\pm \weakstarto \mm$ in $L^{\infty}(0,T;\HH^1(\Omega))$.
\begin{itemize}
\item \textbf{Step 2:} Proof of~\eqref{eq:convergences7}.
\end{itemize}
Let $\vv\in\LL^2(\Omega_T)$ such that $\vvhk^-\weakto\vv$ in $\LL^2(\Omega_T)$.
In the case of TPS1 and TPS2, which includes the nodal projection, it holds that
\begin{equation*}
\norm[\LL^1(\Omega_T)]{\mmt-\vv}
\leq \liminf_{h,k \to 0} \norm[\LL^1(\Omega_T)]{\mmhkt-\vvhk^-}
\stackrel{\eqref{eq:geoEstCor2}}{\leq}
\Cgeo^2 k \norm[\LL^2(\Omega_T)]{\vvhk^-}^2
\stackrel{\eqref{eq:boundedness}}{\lesssim} k,
\end{equation*}
which shows that $\vv = \mmt$ a.e.\ in $\Omega_T$.
For PF-TPS1, the result directly follows from the equality $\mmh^{i+1}=\mmh^i + k \vvh^i$.
\begin{itemize}
\item \textbf{Step 3:} $\mm$ satisfies $\abs{\mm}=1$ a.e.\ in $\Omega_T$.
\end{itemize}
In the case of TPS1 and TPS2, since $\interp\big[\abs{\mmh^i}^2\big]=1$ and $\Grad\mmh^i$ is piecewise constant for all $0 \leq i \leq N-1$, it holds that
\begin{equation*}
\big\lVert{\abs{\mmhk^-}^2-1}\big\rVert_{L^2(\Omega_T)}
\lesssim h \norm[\LL^2(\Omega_T)]{\Grad\mmhk^-}
\stackrel{\eqref{eq:boundedness}}{\lesssim} h,
\end{equation*}
which yields the convergence $\abs{\mmhk^-}^2 \to 1$ in $L^2(\Omega_T)$.
Since $\mmhk^- \to \mm$ pointwise a.e.\ in $\Omega_T$, we deduce that $\abs{\mm}=1$ a.e.\ in $\Omega_T$.
\par
In the case of PF-TPS1, we start with a triangle inequality, which shows that
\begin{equation*}
\begin{split}
& \big\lVert{\abs{\mm}^2-1}\big\rVert_{L^1(\Omega_T)} \\
& \quad \leq \big\lVert{\abs{\mm}^2 - \abs{\mmhk^+}^2}\big\rVert_{L^1(\Omega_T)}
+ \big\lVert{\abs{\mmhk^+}^2-\interp\big[\abs{\mmhk^+}^2\big]}\big\rVert_{L^1(\Omega_T)}
+ \big\lVert{\interp\big[\abs{\mmhk^+}^2\big]-1}\big\rVert_{L^1(\Omega_T)}.
\end{split}
\end{equation*}
The first two terms on the right-hand side converge to $0$.
Indeed, on the one hand, it holds that
\begin{equation*}
\big\lVert{\abs{\mm}^2 - \abs{\mmhk^+}^2}\big\rVert_{L^1(\Omega_T)}
\leq \norm[\LL^2(\Omega_T)]{\mm + \mmhk^+}
\norm[\LL^2(\Omega_T)]{\mm - \mmhk^+}
\lesssim \norm[\LL^2(\Omega_T)]{\mm - \mmhk^+}
\end{equation*}
and $\mmhk^+ \to \mm$ in $\LL^2(\Omega_T)$.
On the other hand, using the approximation properties~\eqref{eq:nodalInterpolant} of the nodal interpolant and the fact that $\Grad\mmh^{i+1}$ is piecewise constant, one shows that
\begin{equation*}
\big\lVert{\abs{\mmhk^+}^2-\interp\big[\abs{\mmhk^+}^2\big]}\big\rVert_{L^1(\Omega_T)}
\lesssim h^2 \norm[\LL^2(\Omega_T)]{\Grad\mmhk^+}^2
\stackrel{\eqref{eq:boundedness}}{\lesssim} h^2.
\end{equation*}
To conclude, it remains to show that
\begin{equation} \label{eq:InterpolantConvergesInL1}
\big\lVert{\interp\big[\abs{\mmhk^+}^2\big]-1}\big\rVert_{L^1(\Omega_T)} \to 0.
\end{equation}
For any $t \in (0,T)$, let $0 \leq i \leq N-1$ such that $t \in [t_i,t_{i+1})$.
It holds that
\begin{equation*}
\begin{split}
& \big\lVert{\interp\big[\abs{\mmhk^+(t)}^2\big]-1}\big\rVert_{L^1(\Omega)}
= \big\lVert{\interp\big[\abs{\mmh^{i+1}}^2\big]-1}\big\rVert_{L^1(\Omega)} \\
& \ \leq \big\lVert{\interp\big[\abs{\mmh^{i+1}}^2\big]-\interp\big[\abs{\mmh^0}^2\big]}\big\rVert_{L^1(\Omega)}
+ \big\lVert{\interp\big[\abs{\mmh^0}^2\big]-\abs{\mmh^0}^2}\big\rVert_{L^1(\Omega)}
+ \big\lVert{\abs{\mmh^0}^2-1}\big\rVert_{L^1(\Omega)}.
\end{split}
\end{equation*}
For the first term on the right-hand side, it holds that
\begin{equation} \label{eq:ConstraintLinearDecay}
\begin{split}
\big\lVert{\interp\big[\abs{\mmh^{i+1}}^2\big]-\interp\big[\abs{\mmh^0}^2\big]}\big\rVert_{L^1(\Omega)}
& \stackrel{\eqref{eq:discreteNormEquivalence}}{\lesssim} h^3 \sum_{\zz \in \Nh} \abs{\abs{\mmh^{i+1}(\zz)}^2 - \abs{\mmh^0(\zz)}^2} \\
& \stackrel{\eqref{eq:pftps1:recursive}}{=} h^3 \sum_{\zz \in \Nh} k^2 \sum_{\ell=0}^i \abs{\vvh^{\ell}(\zz)}^2 \\
& \stackrel{\eqref{eq:discreteNormEquivalence}}{\lesssim} k^2 \sum_{\ell=0}^i \norm[\LL^2(\Omega)]{\vvh^{\ell}}^2
\stackrel{\eqref{eq:pftps1:stability}}{\leq} C k.
\end{split}
\end{equation}
Using the approximation properties of $\interp$, we estimate the second term by
\begin{equation*}
\big\lVert{\interp\big[\abs{\mmh^0}^2\big]-\abs{\mmh^0}^2}\big\rVert_{L^1(\Omega)}
\lesssim h^2 \norm[\LL^2(\Omega)]{\Grad\mmh^0}^2
\stackrel{\eqref{eq:convergenceMh0}}{\lesssim} h^2.
\end{equation*}
Finally, since $\abs{\mm^0}=1$ a.e.\ in $\Omega$ by assumption, the third term satisfies that
\begin{equation*}
\begin{split}
\big\lVert{\abs{\mmh^0}^2-1}\big\rVert_{L^1(\Omega)}
& = \big\lVert{\abs{\mmh^0}^2-\abs{\mm^0}^2}\big\rVert_{L^1(\Omega)}
\leq \norm[\LL^2(\Omega)]{\mmh^0+\mm^0} \norm[\LL^2(\Omega)]{\mmh^0-\mm^0} \\
& \lesssim \norm[\LL^2(\Omega)]{\mmh^0-\mm^0}.
\end{split}
\end{equation*}
Thanks to~\eqref{eq:convergenceMh0}, this yields the convergence $\abs{\mmh^0}^2 \to 1$ in $L^1(\Omega)$.
Altogether, this proves~\eqref{eq:InterpolantConvergesInL1} and thus concludes the proof.
\end{proof}
\subsection{Identification of the limit with a weak solution of LLG}
We start with establishing an auxiliary convergence result for the time reconstructions obtained by PF-TPS1.
\begin{lemma} \label{lem:auxiliaryConv}
Suppose that the assumptions of Theorem~\ref{thm:main} are satisfied and let $\{ \mmhk^{\pm} \}$ be the time reconstructions generated by PF-TPS1.
For all $0 < s < 1$, it holds that
\begin{equation*} 
\mmhk^{\pm} \to \mm \quad \text{in } L^2(0,T;\HH^s(\Omega))
\quad \text{as } h,k \to 0.
\end{equation*}
\end{lemma}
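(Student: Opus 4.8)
The strategy is to reduce the claim to the convergence $\mmhk \to \mm$ in $L^2(0,T;\HH^s(\Omega))$, which is already established for every algorithm (along the subsequence extracted there) in Proposition~\ref{prop:convergenceSubsequences}. By the triangle inequality it therefore suffices to show that $\norm[L^2(0,T;\HH^s(\Omega))]{\mmhk - \mmhk^\pm} \to 0$ as $h,k \to 0$; and since the embedding $\HH^1(\Omega) \hookrightarrow \HH^s(\Omega)$ is continuous for $0<s<1$, with a constant depending only on $s$ and $\Omega$, it is in turn enough to estimate $\norm[L^2(0,T;\HH^1(\Omega))]{\mmhk - \mmhk^\pm}$.

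First I would use the linear time-stepping of PF-TPS1 to express the difference of the reconstructions explicitly. For $t \in [t_i,t_{i+1})$, the identity $\mmh^{i+1} = \mmh^i + k\vvh^i$ yields $\mmhk(t) - \mmhk^-(t) = (t-t_i)\vvh^i$ and $\mmhk(t) - \mmhk^+(t) = -(t_{i+1}-t)\vvh^i$, so that $\norm[\HH^1(\Omega)]{\mmhk(t) - \mmhk^\pm(t)} \le k \norm[\HH^1(\Omega)]{\vvh^i}$. Integrating over $(0,T)$ gives
\begin{equation*}
\norm[L^2(0,T;\HH^1(\Omega))]{\mmhk - \mmhk^\pm}^2
\le k^3 \sum_{i=0}^{N-1} \norm[\HH^1(\Omega)]{\vvh^i}^2
= k^2 \bigg( k \sum_{i=0}^{N-1} \norm[\LL^2(\Omega)]{\vvh^i}^2 \bigg)
+ \big( k \norm[\LL^2(\Omega_T)]{\Grad\vvhk^-} \big)^2 .
\end{equation*}
The first summand on the right is $\bigO{k^2}$ by the stability estimate~\eqref{eq:pftps1:stability}, and the second tends to $0$ by~\eqref{eq:GradVto0}. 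Hence $\norm[L^2(0,T;\HH^1(\Omega))]{\mmhk - \mmhk^\pm} \to 0$, and together with the reductions above this concludes the proof.

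The single point that calls for care is that, for PF-TPS1, no CFL coupling between $h$ and $k$ is imposed; one therefore cannot trade $\norm[\LL^2(\Omega)]{\Grad\vvh^i}$ for $h^{-1}\norm[\LL^2(\Omega)]{\vvh^i}$ via an inverse estimate and then appeal to the bound on $k\sum_i \norm[\LL^2(\Omega)]{\vvh^i}^2$ — this is precisely the route that works for TPS1 and TPS2 thanks to the assumption $k/h \to 0$, but it is unavailable here. Instead one must invoke~\eqref{eq:GradVto0}, which for PF-TPS1 is a direct consequence of the sharper control $k^2 \sum_i \norm[\LL^2(\Omega)]{\Grad\vvh^i}^2 \le C$ furnished by the discrete energy law of Proposition~\ref{prop:pftps1:energy}, with no recourse to a mesh condition. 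All remaining manipulations are routine bookkeeping.
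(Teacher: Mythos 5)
Your proof is correct and follows essentially the same route as the paper: both reduce the claim to $\norm[L^2(0,T;\HH^s(\Omega))]{\mmhk-\mmhk^{\pm}}\to 0$, bound this by $k^2\norm[\LL^2(\Omega_T)]{\vvhk^-}^2 + k^2\norm[\LL^2(\Omega_T)]{\Grad\vvhk^-}^2$ via the linear time-stepping and the embedding $\HH^1(\Omega)\subset\HH^s(\Omega)$, and conclude from the stability estimate~\eqref{eq:pftps1:stability} together with~\eqref{eq:GradVto0}. Your closing remark correctly identifies why the inverse-estimate route used for TPS1/TPS2 is unavailable here and why~\eqref{eq:GradVto0} must instead come from the discrete energy law of Proposition~\ref{prop:pftps1:energy}.
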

\begin{proof}
Let $0 < s < 1$.
It holds that
\begin{equation*}
\begin{split}
& \norm[L^2(0,T;\HH^s(\Omega))]{\mmhk-\mmhk^{\pm}}^2 \\
& \quad = \int_0^T \norm[\HH^s(\Omega)]{\mmhk(t)-\mmhk^{\pm}(t)}^2 \dt \stackrel{\eqref{eq:timeApprox}}{\leq} k^2 \int_0^T \norm[\HH^s(\Omega)]{\mmhkt(t)}^2 \dt \\
& \quad = k^2 \sum_{i=0}^{N-1} \int_{t_i}^{t_{i+1}} \norm[\HH^s(\Omega)]{\mmhkt(t)}^2 \dt
= k^3 \sum_{i=0}^{N-1} \norm[\HH^s(\Omega)]{(\mmh^{i+1}-\mmh^i)/k}^2 \\
& \quad = k^3 \sum_{i=0}^{N-1} \norm[\HH^s(\Omega)]{\vvh^i}^2
\lesssim k^3 \sum_{i=0}^{N-1} \norm[\HH^1(\Omega)]{\vvh^i}^2
= k^2 \norm[\LL^2(\Omega_T)]{\vvhk^-}^2 + k^2 \norm[\LL^2(\Omega_T)]{\Grad\vvhk^-}^2.
\end{split}
\end{equation*}
Since $\mmhk \to \mm$ in $L^2(0,T;\HH^s(\Omega))$ by Proposition~\ref{prop:convergenceSubsequences}, the result follows from~\eqref{eq:boundedness}--\eqref{eq:GradVto0}.
\end{proof}
We have collected all ingredients to finalize the proof of Theorem~\ref{thm:main}.
\begin{proof}[Proof of Theorem~\ref{thm:main}]
By Proposition~\ref{prop:convergenceSubsequences}, for any algorithm, we deduce the desired convergence towards a function $\mm\in L^{\infty}(0,T;\HH^1(\Omega)) \cap H^1(0,T;\LL^2(\Omega))$ satisfying $\abs{\mm}=1$ a.e.\ in $\Omega_T$.
Since $\mmhk \weakto \mm$ in $\HH^1(\Omega_T)$, we also have the weak convergence of the traces, i.e., $\mmhk(0) \weakto \mm(0)$ in $\HH^{1/2}(\Omega)$.
By assumption~\eqref{eq:convergenceMh0}, we deduce that $\mm(0)=\mm^0$ in the sense of traces.
It remains to show that $\mm$ fulfills the variational formulation~\eqref{eq:weak:variational} and the energy inequality~\eqref{eq:weak:energyLaw}.
For the sake of clarity, we consider the three algorithms separately.
\begin{itemize}
\item \textbf{Step 1:} Proof of the result for TPS1.
\end{itemize}
Let $\vvphi\in\CC^{\infty}(\overline{\Omega_T})$ be an arbitrary test function.
For any $0\leq i \leq N-1$ and $t \in (t_i,t_{i+1})$, we test~\eqref{eq:tps1} with $\pphih=\Interp[\mmhk^-(t)\times\vvphi(t)]\in\Kh(\mmh^i)$.
Integrating in time over $t \in (t_i,t_{i+1})$, summing over $0\leq i \leq N-1$, and using the approximation property~\eqref{eq:nodalInterpolant} of the nodal interpolant, we obtain the identity
\begin{equation} \label{eq:tps1:schemeIntegrated}
\begin{split}
& \alpha \int_0^T\inner{\vvhk^-(t)}{\mmhk^-(t)\times\vvphi(t)} \, \dt
+ \int_0^T \inner{\mmhk^-(t)\times\vvhk^-(t)}{\mmhk^-(t)\times\vvphi(t)} \, \dt \\
& \ + \lex^2 \int_0^T \inner{\Grad[\mmhk^-(t)+\theta k\vvhk^-(t)]}{\Grad[\mmhk^-(t)\times\vvphi(t)]} \, \dt 
+ \mathcal{O}(h) \\
& \quad = - \frac{\ldm}{2} \int_0^T \inner{\curl\mmhk^-(t)}{\mmhk^-(t)\times\vvphi(t)} \, \dt \\
& \qquad - \frac{\ldm}{2} \int_0^T \inner{\mmhk^-(t)}{\curl[\mmhk^-(t)\times\vvphi(t)]} \, \dt.
\end{split}
\end{equation}
Using the available convergence results, we would like to pass the latter to the limit as $h,k \to 0$ to obtain~\eqref{eq:weak:variational}.
For the left-hand side, it holds that
\begin{align*}
\alpha \int_0^T\inner{\vvhk^-(t)}{\mmhk^-(t)\times\vvphi(t)} \, \dt
& \to
- \alpha \int_0^T\inner{\mm(t)\times\mmt(t)}{\vvphi(t)} \, \dt, \\
\int_0^T \inner{\mmhk^-(t)\times\vvhk^-(t)}{\mmhk^-(t)\times\vvphi(t)} \, \dt
& \to 
\int_0^T \inner{\mmt(t)}{\vvphi(t)} \, \dt, \\
\lex^2 \int_0^T \inner{\Grad[\mmhk^-(t)+\theta k\vvhk^-(t)]}{\Grad\mmhk^-(t)\times\vvphi(t)} \, \dt
& \to
\lex^2 \int_0^T \inner{\mm(t) \times \Grad\mm(t)}{\Grad\vvphi(t)} \, \dt;
\end{align*}
see~\cite{alouges2008a,bffgpprs2014} for details.
For the first term on the right-hand side, since $\curl\mmhk^- \weakto \curl\mm$ and $\mmhk^-\times\vvphi \to \mm\times\vvphi$ in $\LL^2(\Omega_T)$, it holds that
\begin{equation*}
- \frac{\ldm}{2} \int_0^T \inner{\curl\mmhk^-(t)}{\mmhk^-(t)\times\vvphi(t)} \, \dt
\to - \frac{\ldm}{2} \int_0^T \inner{\curl\mm(t)}{\mm(t)\times\vvphi(t)} \, \dt.
\end{equation*}
Since $\curl(\mmhk^-\times\vvphi) \weakto \curl(\mm\times\vvphi)$ in $\LL^2(\Omega_T)$, it follows that
\begin{equation*}
- \frac{\ldm}{2} \int_0^T \inner{\mmhk^-(t)}{\curl[\mmhk^-(t)\times\vvphi(t)]} \, \dt
\to - \frac{\ldm}{2} \int_0^T \inner{\mm(t)}{\curl[\mm(t)\times\vvphi(t)]} \, \dt.
\end{equation*}
By~\eqref{eq:green}, it holds that
\begin{equation*}
\begin{split}
& - \frac{\ldm}{2} \int_0^T \inner{\curl\mm(t)}{\mm(t)\times\vvphi(t)} \, \dt
- \frac{\ldm}{2} \int_0^T \inner{\mm(t)}{\curl[\mm(t)\times\vvphi(t)]} \, \dt \\
& \quad = - \ldm \int_0^T \inner{\curl\mm(t)}{\mm(t)\times\vvphi(t)} \, \dt
- \frac{\ldm}{2} \int_0^T \edual{\gamma_T[\mm(t)]}{\mm(t)\times\vvphi(t)} \, \dt,
\end{split}
\end{equation*}
which proves~\eqref{eq:weak:variational} for any smooth test function $\vvphi$.
The desired result then follows by density.
\par
The energy inequality~\eqref{eq:weak:energyLaw} is obtained by passing~\eqref{eq:tps1:energy} to the limit as $h,k \to 0$ and using the available convergence results~\eqref{eq:convergences}, assumption~\eqref{eq:convergenceMh0} on the initial condition, the fact that $k/h \to 0$, in combination with standard lower semicontinuity arguments.
\begin{itemize}
\item \textbf{Step 2:} Proof of the result for PF-TPS1.
\end{itemize}
The proof follows the lines of the one for TPS1 discussed in Step~1.
In the proof of the variational formulation~\eqref{eq:weak:variational}, the only difference is the convergence of the second term on the left-hand side of~\eqref{eq:tps1:schemeIntegrated}, which is more subtle here, since omitting the nodal projection the uniform boundedness of $\mmhk^-$ in $\LL^{\infty}(\Omega_T)$ is lost.
To show the desired convergence, we start with recalling the so-called Lagrange identity
\begin{equation} \label{eq:LagrangeIdentity}
(\vec{a}\times\vec{b})\cdot(\vec{c}\times\vec{d})=(\vec{a}\cdot\vec{c})(\vec{b}\cdot\vec{d}) - (\vec{a}\cdot\vec{d})(\vec{b}\cdot\vec{c}) \quad \text{for all } \vec{a},\vec{b},\vec{c},\vec{d} \in\real^3
\end{equation}
and the continuous embedding $\HH^s(\Omega) \subset \LL^4(\Omega)$, which holds for all $s \geq 3/4$.
Choosing an arbitrary $3/4 \leq s < 1$, we obtain the estimate
\begin{equation*}
\begin{split}
\big\lVert{\abs{\mmhk^-}^2-1}\big\rVert_{L^2(\Omega_T)}^2
& = \big\lVert{\abs{\mmhk^-}^2-\abs{\mm}^2}\big\rVert_{L^2(\Omega_T)}^2
= \int_0^T \big\lVert{\abs{\mmhk^-(t)}^2- \abs{\mm(t)}^2}\big\rVert_{L^2(\Omega)}^2 \dt \\
& = \int_0^T \big\lVert{[\mmhk^-(t)+\mm(t)]\cdot[\mmhk^-(t)-\mm(t)]}\big\rVert_{L^2(\Omega)}^2 \dt \\
& \leq \int_0^T \norm[\LL^4(\Omega)]{\mmhk^-(t)+\mm(t)}^2 \norm[\LL^4(\Omega)]{\mmhk^-(t)-\mm(t)}^2 \dt \\
& \leq \int_0^T \norm[\HH^1(\Omega)]{\mmhk^-(t)+\mm(t)}^2 \norm[\HH^s(\Omega)]{\mmhk^-(t)-\mm(t)}^2 \dt \\
& \leq \norm[L^{\infty}(0,T;\HH^1(\Omega))]{\mmhk^-+\mm}^2 \norm[L^2(0,T;\HH^s(\Omega))]{\mmhk^--\mm}^2 \\
& \lesssim \norm[L^2(0,T;\HH^s(\Omega))]{\mmhk^--\mm}^2.
\end{split}
\end{equation*}
Thanks to Lemma~\ref{lem:auxiliaryConv}, we deduce that $\abs{\mmhk^-}^2 \to 1$ in $L^2(\Omega_T)$ as $h,k \to 0$.
Together with the weak convergence $\vvhk^-\cdot\vvphi \weakto \mmt\cdot\vvphi$ in $L^2(\Omega_T)$, it follows that
\begin{equation*}
\begin{split}
& \int_0^T \inner{\mmhk^-(t)\times\vvhk^-(t)}{\mmhk^-(t)\times\vvphi(t)} \, \dt \\
& \quad \stackrel{\eqref{eq:LagrangeIdentity}}{=} \int_0^T \inner{\abs{\mmhk^-(t)}^2}{\vvhk^-(t)\cdot\vvphi(t)} \, \dt
\to \int_0^T \inner{\mmt(t)}{\vvphi(t)} \, \dt.
\end{split}
\end{equation*}
Finally, passing the discrete energy law~\eqref{eq:pftps1:energy} to the limit as $h,k \to 0$, thanks to~\eqref{eq:convergenceMh0}, \eqref{eq:GradVto0}, the available convergence results~\eqref{eq:convergences}, and standard lower semicontinuity arguments, we obtain~\eqref{eq:weak:energyLaw}.
\begin{itemize}
\item \textbf{Step 3:} Proof of the result for TPS2.
\end{itemize}
The verification of the variational formulation~\eqref{eq:weak:variational} follows by the same method used in Step~1 for TPS1.
Given an arbitrary $\vvphi\in\CC^{\infty}(\overline{\Omega_T})$, for any $0\leq i \leq N-1$ and $t \in (t_i,t_{i+1})$, we test~\eqref{eq:tps2} with $\pphih=\Interp[\mmhk^-(t)\times\vvphi(t)]\in\Kh(\mmh^i)$ to obtain
\begin{equation} \label{eq:tps2:schemeIntegrated}
\begin{split}
& \alpha \int_0^T\inner{W_{M(k)}(\lambda_{hk}^-(t))\vvhk^-(t)}{\mmhk^-(t)\times\vvphi(t)} \, \dt \\
& \ + \int_0^T \inner{\mmhk^-(t)\times\vvhk^-(t)}{\mmhk^-(t)\times\vvphi(t)} \, \dt \\
& \ + \lex^2 \int_0^T \inner{\Grad[\mmhk^-(t)+ (1 + \rho(k))(k/2)\vvhk^-(t)]}{\Grad[\mmhk^-(t)\times\vvphi(t)]} \, \dt 
+ \mathcal{O}(h) \\
& \quad = - \frac{\ldm}{2} \int_0^T \inner{\curl[\mmhk^-(t)+(k/2)\vvhk^-(t)]}{\mmhk^-(t)\times\vvphi(t)} \, \dt \\
& \qquad - \frac{\ldm}{2} \int_0^T \inner{\mmhk^-(t)+(k/2)\vvhk^-(t)}{\curl[\mmhk^-(t)\times\vvphi(t)]} \, \dt,
\end{split}
\end{equation}
where, in analogy with~\eqref{eq:timeApprox}, we define the piecewise time reconstruction $\lambda_{hk}^-$ by $\lambda_{hk}^-(t) := \lambda_h^i$ for all $0 \leq i \leq N-1$ and $t \in [t_i,t_{i+1})$.
\par
With the available convergence result~\eqref{eq:convergences} and the convergence properties of $W_{M(k)}(\cdot)$ and $\rho(\cdot)$, each of the three terms on the left-hand side converges towards the corresponding term of~\eqref{eq:weak:variational} as $h,k \to 0$; see~\cite{akst2014} for details.
We discuss the convergence of the two terms on the right-hand side.
Since $\curl\mmhk^- \weakto \curl\mm$ and $\mmhk^-\times\vvphi \to \mm\times\vvphi$ in $\LL^2(\Omega_T)$, it holds that
\begin{equation*}
- \frac{\ldm}{2} \int_0^T \inner{\curl\mmhk^-(t)}{\mmhk^-(t)\times\vvphi(t)} \, \dt
\to
- \frac{\ldm}{2} \int_0^T \inner{\curl\mm(t)}{\mm(t)\times\vvphi(t)} \, \dt.
\end{equation*}
Moreover, it holds that
\begin{equation*}
- \frac{\ldm}{4} k \int_0^T \inner{\curl\vvhk^-(t)}{\mmhk^-(t)\times\vvphi(t)} \, \dt \to 0,
\end{equation*}
which follows from~\eqref{eq:GradVto0}, since
\begin{equation*}
\begin{split}
\abs{k \int_0^T \inner{\curl\vvhk^-(t)}{\mmhk^-(t)\times\vvphi(t)} \, \dt}
& \lesssim k \norm[\LL^2(\Omega_T)]{\Grad\vvhk^-} \norm[\LL^{\infty}(\Omega_T)]{\mmhk^-} \norm[\LL^2(\Omega_T)]{\vvphi} \\
& \lesssim k \norm[\LL^2(\Omega_T)]{\Grad\vvhk^-}.
\end{split}
\end{equation*}
Hence, the first term on the right-hand side of~\eqref{eq:tps2:schemeIntegrated} converges towards
\begin{equation*}
- \frac{\ldm}{2} \int_0^T \inner{\curl\mm(t)}{\mm(t)\times\vvphi(t)} \, \dt.
\end{equation*}
Similarly, we show that the second term on the right-hand side converges towards
\begin{equation*}
- \frac{\ldm}{2} \int_0^T \inner{\mm(t)}{\curl[\mm(t)\times\vvphi(t)]} \, \dt.
\end{equation*}
As shown in Step~1 for TPS1, it holds that
\begin{equation*}
- \frac{\ldm}{2} \int_0^T \inner{\mmhk^-(t)}{\curl[\mmhk^-(t)\times\vvphi(t)]} \, \dt
\to
- \frac{\ldm}{2} \int_0^T \inner{\mm(t)}{\curl[\mm(t)\times\vvphi(t)]} \, \dt.
\end{equation*}
On the other hand, we have that
\begin{equation*}
\abs{ k \int_0^T \inner{\vvhk^-(t)}{\curl[\mmhk^-(t)\times\vvphi(t)]} \, \dt}
\lesssim k \norm[\LL^2(\Omega_T)]{\vvhk^-} \norm[\HH^1(\Omega_T)]{\mmhk^-} \norm[\WW^{1,\infty}(\Omega_T)]{\vvphi}
\lesssim k,
\end{equation*}
which shows that
\begin{equation*}
- \frac{\ldm}{4} k \int_0^T \inner{\vvhk^-(t)}{\curl[\mmhk^-(t)\times\vvphi(t)]} \, \dt \to 0.
\end{equation*}
This proves~\eqref{eq:weak:variational} for any smooth test function $\vvphi$.
By density, we obtain the desired result.
\par
Finally, the energy inequality~\eqref{eq:weak:energyLaw} is obtained by passing to the limit as $h,k \to 0$ the discrete energy law~\eqref{eq:tps2:energy} and using standard lower semicontinuity arguments.
\end{proof}
\bibliographystyle{acm}
\bibliography{ref}

\begin{thebibliography}{10}

\bibitem{commics}
Commics -- {A} {P}ython module for {C}omputational {M}icromagnetics.
\newblock
  \href{https://gitlab.asc.tuwien.ac.at/cpfeiler/commics}{https://gitlab.asc.tuwien.ac.at/cpfeiler/commics}.

\bibitem{ngsolve}
{N}etgen/{NGS}olve {F}inite {E}lement {L}ibrary.
\newblock \href{https://ngsolve.org/}{https://ngsolve.org/}.

\bibitem{ahpprs2014}
{\sc Abert, C., Hrkac, G., Page, M., Praetorius, D., Ruggeri, M., and Suess,
  D.}
\newblock Spin-polarized transport in ferromagnetic multilayers: {A}n
  unconditionally convergent {FEM} integrator.
\newblock {\em Comput. Math. Appl. 68}, 6 (2014), 639--654.

\bibitem{agl2005}
{\sc Ahrens, J., Geveci, B., and Law, C.}
\newblock {ParaView}: {A}n end-user tool for large-data visualization.
\newblock In {\em Visualization Handbook}, C.~D. Hansen and C.~R. Johnson, Eds.
  Elsevier, 2005, pp.~717--731.

\bibitem{alouges2008a}
{\sc Alouges, F.}
\newblock A new finite element scheme for {L}andau--{L}ifchitz equations.
\newblock {\em Discrete Contin. Dyn. Syst. Ser. S 1}, 2 (2008), 187--196.

\bibitem{aj2006}
{\sc Alouges, F., and Jaisson, P.}
\newblock Convergence of a finite element discretization for the
  {L}andau--{L}ifshitz equation in micromagnetism.
\newblock {\em Math. Models Methods Appl. Sci. 16}, 2 (2006), 299--316.

\bibitem{akst2014}
{\sc Alouges, F., Kritsikis, E., Steiner, J., and Toussaint, J.-C.}
\newblock A convergent and precise finite element scheme for
  {L}andau--{L}ifschitz--{G}ilbert equation.
\newblock {\em Numer. Math. 128}, 3 (2014), 407--430.

\bibitem{akt2012}
{\sc Alouges, F., Kritsikis, E., and Toussaint, J.-C.}
\newblock A convergent finite element approximation for
  {L}andau--{L}ifschitz--{G}ilbert equation.
\newblock {\em Physica B 407}, 9 (2012), 1345--1349.

\bibitem{as1992}
{\sc Alouges, F., and Soyeur, A.}
\newblock On global weak solutions for {L}andau--{L}ifshitz equations:
  Existence and nonuniqueness.
\newblock {\em Nonlinear Anal. 18}, 11 (1992), 1071--1084.

\bibitem{bpp2015}
{\sc Ba{\v n}as, L., Page, M., and Praetorius, D.}
\newblock A convergent linear finite element scheme for the
  {M}axwell--{L}andau--{L}ifshitz--{G}ilbert equations.
\newblock {\em Electron. Trans. Numer. Anal. 44\/} (2015), 250--270.

\bibitem{bppr2013}
{\sc Ba{\v n}as, L., Page, M., Praetorius, D., and Rochat, J.}
\newblock A decoupled and unconditionally convergent linear {FEM} integrator
  for the {L}andau--{L}ifshitz--{G}ilbert equation with magnetostriction.
\newblock {\em IMA J. Numer. Anal. 34}, 4 (2014), 1361--1385.

\bibitem{bartels2005}
{\sc Bartels, S.}
\newblock Stability and convergence of finite-element approximation schemes for
  harmonic maps.
\newblock {\em SIAM J. Numer. Anal. 43}, 1 (2005), 220--238.

\bibitem{bartels2016}
{\sc Bartels, S.}
\newblock Projection-free approximation of geometrically constrained partial
  differential equations.
\newblock {\em Math. Comp. 85}, 299 (2016), 1033--1049.

\bibitem{bkp2008}
{\sc Bartels, S., Ko, J., and Prohl, A.}
\newblock Numerical analysis of an explicit approximation scheme for the
  {L}andau--{L}ifshitz--{G}ilbert equation.
\newblock {\em Math. Comp. 77}, 262 (2008), 773--788.

\bibitem{bp2006}
{\sc Bartels, S., and Prohl, A.}
\newblock Convergence of an implicit finite element method for the
  {L}andau--{L}ifshitz--{G}ilbert equation.
\newblock {\em SIAM J. Numer. Anal. 44}, 4 (2006), 1405--1419.

\bibitem{babcwcvhcsmf2017}
{\sc Beg, M., Albert, M., Bisotti, M.-A., Cort{\'e}s-Ortu{\~n}o, D., Wang, W.,
  Carey, R., Vousden, M., Hovorka, O., Ciccarelli, C., Spencer, C.~S., Marrows,
  C.~H., and Fangohr, H.}
\newblock Dynamics of skyrmionic states in confined helimagnetic
  nanostructures.
\newblock {\em Phys. Rev. B 95}, 1 (2017), 014433.

\bibitem{bl1976}
{\sc Bergh, J., and L{\"o}fstr{\"o}m, J.}
\newblock {\em Interpolation space: {A}n introduction}, vol.~223 of {\em
  Grundlehren der mathematischen Wissenschaften}.
\newblock Springer, 1976.

\bibitem{bbf2013}
{\sc Boffi, D., Brezzi, F., and Fortin, M.}
\newblock {\em Mixed finite element methods and applications}, vol.~44 of {\em
  Springer Series in Computational Mathematics}.
\newblock Springer, 2013.

\bibitem{bh1994}
{\sc Bogdanov, A., and Hubert, A.}
\newblock Thermodynamically stable magnetic vortex states in magnetic crystals.
\newblock {\em J. Magn. Magn. Mater. 138}, 3 (1994), 255--269.

\bibitem{by1989}
{\sc Bogdanov, A., and Yablonskii, D.~A.}
\newblock Thermodynamically stable ``vortices'' in magnetically ordered
  crystals. {T}he mixed state of magnets.
\newblock {\em J. Exp. Theor. Phys. 68}, 1 (1989), 101--103.

\bibitem{br2001}
{\sc Bogdanov, A.~N., and R{\"o}{\ss}ler, U.~K.}
\newblock Chiral symmetry breaking in magnetic thin films and multilayers.
\newblock {\em Phys. Rev. Lett. 87}, 3 (2001), 037203.

\bibitem{bffgpprs2014}
{\sc Bruckner, F., Feischl, M., F{\"u}hrer, T., Goldenits, P., Page, M.,
  Praetorius, D., Ruggeri, M., and Suess, D.}
\newblock Multiscale modeling in micromagnetics: {E}xistence of solutions and
  numerical integration.
\newblock {\em Math. Models Methods Appl. Sci. 24}, 13 (2014), 2627--2662.

\bibitem{cl1998}
{\sc Cr{\'e}pieux, A., and Lacroix, C.}
\newblock Dzyaloshinsky--{M}oriya interactions induced by symmetry breaking at
  a surface.
\newblock {\em J. Magn. Magn. Mater. 182}, 3 (1998), 341--349.

\bibitem{dpprs2017}
{\sc Di~Fratta, G., Pfeiler, C.-M., Praetorius, D., Ruggeri, M., and Stiftner,
  B.}
\newblock Linear second order {IMEX}-type integrator for the (eddy current)
  {L}andau--{L}ifshitz--{G}ilbert equation.
\newblock Submitted for publication, preprint available at
  \href{https://arxiv.org/abs/1711.10715}{arXiv:1711.10715}, 2017.

\bibitem{dm2017}
{\sc D{\"o}ring, L., and Melcher, C.}
\newblock Compactness results for static and dynamic chiral skyrmions near the
  conformal limit.
\newblock {\em Calc. Var. Partial Differential Equations 56:60\/} (2017).

\bibitem{dzyaloshinskii1958}
{\sc Dzyaloshinskii, I.}
\newblock A thermodynamic theory of `weak' ferromagnetism of
  antiferromagnetics.
\newblock {\em J. Phys. Chem. Solids 4}, 4 (1958), 241--255.

\bibitem{evans2010}
{\sc Evans, L.~C.}
\newblock {\em Partial differential equations}, second~ed., vol.~19 of {\em
  Graduate Studies in Mathematics}.
\newblock American Mathematical Society, 2010.

\bibitem{ft2017}
{\sc Feischl, M., and Tran, T.}
\newblock The {Eddy Current--LLG} equations: {FEM-BEM} coupling and a priori
  error estimates.
\newblock {\em SIAM J. Numer. Anal. 55}, 4 (2017), 1786--1819.

\bibitem{fcs2013}
{\sc Fert, A., Cros, V., and Sampaio, J.}
\newblock Skyrmions on the track.
\newblock {\em Nat. Nanotechnol. 8}, 3 (2013), 152--156.

\bibitem{fbtck2016}
{\sc Finocchio, G., B\"uttner, F., Tomasello, R., Carpentieri, M., and Kl\"aui,
  M.}
\newblock Magnetic skyrmions: from fundamental to applications.
\newblock {\em J. Phys. D: Appl. Phys. 49}, 42 (2016), 423001.

\bibitem{fk1990}
{\sc Fredkin, D.~R., and Koehler, T.~R.}
\newblock Hybrid method for computing demagnetization fields.
\newblock {\em IEEE Trans. Magn. 26}, 2 (1990), 415--417.

\bibitem{garciacervera2007}
{\sc Garc{\'i}a-Cervera, C.~J.}
\newblock Numerical micromagnetics: {A} review.
\newblock {\em Bol. Soc. Esp. Mat. Apl. SeMA 39\/} (2007), 103--135.

\bibitem{gilbert1955}
{\sc Gilbert, T.~L.}
\newblock A {L}agrangian formulation of the gyromagnetic equation of the
  magnetization fields.
\newblock {\em Phys. Rev. 100\/} (1955), 1243.
\newblock Abstract only.

\bibitem{hkkykclksky2016}
{\sc Han, D.-S., Kim, N.-H., Kim, J.-S., Yin, Y., Koo, J.-W., Cho, J., Lee, S.,
  Kl\"aui, M., Swagten, H. J.~M., Koopmans, B., and You, C.-Y.}
\newblock Asymmetric hysteresis for probing {D}zyaloshinskii--{M}oriya
  interaction.
\newblock {\em Nano Lett. 16}, 7 (2016), 4438--4446.

\bibitem{hbmbkwbb2011}
{\sc Heinze, S., von Bergmann, K., Menzel, M., Brede, J., Kubetzka, A.,
  Wiesendanger, R., Bihlmayer, G., and Blugel, S.}
\newblock Spontaneous atomic-scale magnetic skyrmion lattice in two dimensions.
\newblock {\em Nat. Phys. 7}, 9 (2011), 713--718.

\bibitem{hkbb2015}
{\sc Hrkac, G., Keatley, P.~S., Bryan, M.~T., and Butler, K.}
\newblock Magnetic vortex oscillators.
\newblock {\em J. Phys. D: Appl. Phys. 48}, 45 (2015), 453001.

\bibitem{kshmdah2017}
{\sc Keatley, P.~S., Sani, S.~R., Hrkac, G., Mohseni, S.~M., D\"urrenfeld, P.,
  {\AA}kerman, J., and Hicken, R.~J.}
\newblock Imaging magnetisation dynamics in nano-contact spin-torque vortex
  oscillators exhibiting gyrotropic mode splitting.
\newblock {\em J. Phys. D: Appl. Phys. 50}, 16 (2017), 164003.

\bibitem{kshmdlah2016}
{\sc Keatley, P.~S., Sani, S.~R., Hrkac, G., Mohseni, S.~M., D\"urrenfeld, P.,
  Loughran, T. H.~J., {\AA}kerman, J., and Hicken, R.~J.}
\newblock Direct observation of magnetization dynamics generated by nanocontact
  spin-torque vortex oscillators.
\newblock {\em Phys. Rev. B 94\/} (2016), 060402(R).

\bibitem{kw2018}
{\sc Kim, E., and Wilkening, J.}
\newblock Convergence of a mass-lumped finite element method for the
  {L}andau--{L}ifshitz equation.
\newblock {\em Quart. Appl. Math. 76}, 2 (2018), 383--405.

\bibitem{kp2015a}
{\sc Komineas, S., and Papanicolaou, N.}
\newblock Skyrmion dynamics in chiral ferromagnets.
\newblock {\em Phys. Rev. B 92}, 6 (2015), 064412.

\bibitem{kp2006}
{\sc Kru\v{z}\'ik, M., and Prohl, A.}
\newblock Recent developments in the modeling, analysis, and numerics of
  ferromagnetism.
\newblock {\em SIAM Rev. 48}, 3 (2006), 439--483.

\bibitem{ll1935}
{\sc Landau, L., and Lifshitz, E.}
\newblock On the theory of the dispersion of magnetic permeability in
  ferromagnetic bodies.
\newblock {\em Phys. Zeitsch. der Sow. 8\/} (1935), 153--168.

\bibitem{lppt2015}
{\sc Le, K.-N., Page, M., Praetorius, D., and Tran, T.}
\newblock On a decoupled linear {FEM} integrator for eddy-current-{LLG}.
\newblock {\em Appl. Anal. 94}, 5 (2015), 1051--1067.

\bibitem{lt2013}
{\sc Le, K.-N., and Tran, T.}
\newblock A convergent finite element approximation for the quasi-static
  {M}axwell--{L}andau--{L}ifshitz--{G}ilbert equations.
\newblock {\em Comput. Math. Appl. 66}, 8 (2013), 1389--1402.

\bibitem{melcher2014}
{\sc Melcher, C.}
\newblock Chiral skyrmions in the plane.
\newblock {\em Proc. R. Soc. A 470}, 2172 (2014).

\bibitem{moriya1960}
{\sc Moriya, T.}
\newblock Anisotropic superexchange interaction and weak ferromagnetism.
\newblock {\em Phys. Rev. 120}, 91 (1960), 91.

\bibitem{mbjprngb2009}
{\sc M\"uhlbauer, S., Binz, B., Jonietz, F., Pfleiderer, C., Rosch, A.,
  Neubauer, A., Georgii, R., and B\"oni, P.}
\newblock Skyrmion lattice in a chiral magnet.
\newblock {\em Science 323}, 5916 (2009), 915--919.

\bibitem{ms2017}
{\sc Muratov, C.~B., and Slastikov, V.~V.}
\newblock Domain structure of ultrathin ferromagnetic elements in the presence
  of {D}zyaloshinskii--{M}oriya interaction.
\newblock {\em Proc. R. Soc. A 473}, 2197 (2017).

\bibitem{nt2013}
{\sc Nagaosa, N., and Tokura, Y.}
\newblock Topological properties and dynamics of magnetic skyrmions.
\newblock {\em Nat. Nanotechnol. 8}, 12 (2013), 899--911.

\bibitem{pht2008}
{\sc Parkin, S. S.~P., Hayashi, M., and Thomas, L.}
\newblock Magnetic domain-wall racetrack memory.
\newblock {\em Science 320}, 5873 (2008), 190--194.

\bibitem{prsehhsmp2018}
{\sc Pfeiler, C.-M., Ruggeri, M., Stiftner, B., Exl, L., Hochsteger, M., Hrkac,
  G., Sch\"oberl, J., Mauser, N.~J., and Praetorius, D.}
\newblock Computational micromagnetics with {C}ommics.
\newblock Submitted for publication, preprint available at
  \href{https://arxiv.org/abs/1812.05931}{arXiv:1812.05931}, 2018.

\bibitem{prs2018}
{\sc Praetorius, D., Ruggeri, M., and Stiftner, B.}
\newblock Convergence of an implicit-explicit midpoint scheme for computational
  micromagnetics.
\newblock {\em Comput.\ Math.\ Appl. 75}, 5 (2018).

\bibitem{prohl2001}
{\sc Prohl, A.}
\newblock {\em Computational micromagnetism}.
\newblock Advances in numerical mathematics. B. G. Teubner, 2001.

\bibitem{rhmbwbkw2013}
{\sc Romming, N., Hanneken, C., Menzel, M., Bickel, J.~E., Wolter, B., von
  Bergmann, K., Kubetzka, A., and Wiesendanger, R.}
\newblock Writing and deleting single magnetic skyrmions.
\newblock {\em Science 341}, 6146 (2013), 636--639.

\bibitem{rbf2006}
{\sc R{\"o}{\ss}ler, U.~K., Bogdanov, A.~N., and Pfleiderer, C.}
\newblock Spontaneous skyrmion ground states in magnetic metals.
\newblock {\em Nature 442}, 7104 (2006), 797--801.

\bibitem{ruggeri2016}
{\sc Ruggeri, M.}
\newblock {\em Coupling and numerical integration of the
  {L}andau--{L}ifshitz--{G}ilbert equation}.
\newblock PhD thesis, TU Wien, 2016.

\bibitem{scrtf2013}
{\sc Sampaio, J., Cros, V., Rohart, S., Thiaville, A., and Fert, A.}
\newblock Nucleation, stability and current-induced motion of isolated magnetic
  skyrmions in nanostructures.
\newblock {\em Nat. Nanotechnol. 8}, 11 (2013), 839--844.

\bibitem{schoeberl1997}
{\sc Sch\"{o}berl, J.}
\newblock {NETGEN} {A}n advancing front {2D/3D}-mesh generator based on
  abstract rules.
\newblock {\em Comput. Vis. Sci. 1}, 1 (1997), 41--52.

\bibitem{sbaps2015}
{\sc \'{S}migaj, W., Betcke, T., Arridge, S., Phillips, J., and Schweiger, M.}
\newblock Solving boundary integral problems with {BEM++}.
\newblock {\em ACM Trans. Math. Softw. 41}, 2 (2015), 6:1--6:40.

\bibitem{tmztcf2014}
{\sc Tomasello, R., Martinez, E., Zivieri, R., Torres, L., Carpentieri, M., and
  Finocchio, G.}
\newblock A strategy for the design of skyrmion racetrack memories.
\newblock {\em Sci. Rep. 4\/} (2014), 6784.

\bibitem{wiesendanger2016}
{\sc Wiesendanger, R.}
\newblock Nanoscale magnetic skyrmions in metallic films and multilayers: a new
  twist for spintronics.
\newblock {\em Nat. Rev. Mater. 1\/} (2016), 16044.

\end{thebibliography}
\end{document}